\numberwithin{equation}{section}
\theoremstyle{plain}
\newtheorem{thm}{Theorem}[section]
\newtheorem{lem}[thm]{Lemma}
\newtheorem{prop}[thm]{Proposition}
\newtheorem{cor}[thm]{Corollary}
\theoremstyle{definition}
\DeclareMathOperator{\GL}{GL}
\newcommand{\tp}{^{\mathstrut\scriptscriptstyle\top}}
\DeclareMathOperator{\LB}{\Delta_{\mathit{B}}}
\DeclareMathOperator{\Aut}{Aut}
\DeclareMathOperator{\U}{U}
\DeclareMathOperator{\SU}{SU}
\DeclareMathOperator{\id}{id}
\DeclareMathOperator{\hc}{c}
\DeclareMathOperator{\supp}{supp}
\DeclareMathOperator{\osc}{osc}
\newcommand{\fup}[2]{#1^{\overline{#2}}}
\newcommand{\Q}{\mathbb{Q}}
\newcommand{\N}{\mathbb{N}}
\newcommand{\R}{\mathbb{R}}
\newcommand{\Z}{\mathbb{Z}}
\newcommand{\C}{\mathbb{C}}
\renewcommand{\L}{\mathit{L}}
\newcommand{\D}{\mathscr{D}}
\newcommand{\dd}{\mathrm{d}}
\renewcommand{\Re}{\operatorname{Re}}
\renewcommand{\setminus}{\smallsetminus}
\title{Time-Frequency Analysis in the Unit Ball} 
\author{Mathias Ionescu-Tira}
\date{}
\begin{document}
\maketitle
\begin{abstract}
  As an appropriate analog of the Euclidean short-time Fourier transform, we study a windowed version of the Helgason-Fourier transform on the complex unit ball and translate the theory of modulation/coorbit spaces. 
  As a result, atomic decompositions by means of Banach frames are obtained.
\end{abstract}

\noindent\emph{Subject classification.} 42C15, 46E15,	43A85, 42B35 

\noindent\emph{Keywords.} time-frequency analysis, homogeneous spaces, Helgason transform, Banach frames, weighted coorbit spaces.

\section{Introduction}

The short-time Fourier transform (STFT) is a well established tool in harmonic analysis.
Originally introduced as a windowed Fourier transform
\begin{equation*}
  V_\psi f (t, \omega) = \int_\R f(s) e^{-i \omega s} \psi(s-t)\, \dd s
\end{equation*}
on the real line (the fixed window, or analyzing function, $\psi$ may be thought of as having small support or fast decay),
it allows for a unified study of the behavior of a function in time $t$ and frequency $\omega$.

The STFT becomes an object of representation theory of the Heisenberg group by means of translations and modulations, which has led to a vast number of generalizations.
In particular, every (unitary) representation
$\rho: G \to \mathcal{L}(\mathcal{H})$
of a locally compact group $G$ on a Hilbert space $\mathcal{H}$ gives rise to a \emph{voice transform} $V_\psi$ on $\mathcal{H}$ defined by
\begin{equation*}
  V_\psi f(x) = \left< f, \rho(x) \psi \right>.
\end{equation*}
If all $\rho(x) \psi$ ($x \in G$) belong to a closed subspace $M$ of $\mathcal{H}$, then $V_\psi$ extends to distributions in the dual $M'$; the right side is then interpreted as a dual product.

A central part of this theory is concerned with the discretization of this transform, aiming at decompositions of the type
\begin{equation*}
  f = \sum_i \left< f, e_i \right> e_i
\end{equation*}
with respect to suitable atomic functions $e_i$, which in general do not constitute an orthogonal basis, but instead a \emph{Banach frame}, in the sense that
\begin{equation*}
  A \left\Vert f \right\Vert \le \left\Vert \left(\left< f, e_i \right> \right)_i \right\Vert \le A' \left\Vert f \right\Vert
\end{equation*}
holds with $A$ and $A'$ independent of $f$.
The norms involved are those on continuous and discrete versions of a \emph{coorbit space}, a function space consisting of distributions (respectively sequences), whose transforms (respectively their evaluations at discrete points) belong to a solid, translation invariant Banach space, typically some weighted Lebesgue space.

While this concept has been developed in a fairly general setting and has been successfully applied to cover homogeneous spaces, the starting point is often a unitary group representation, and the constructions rely on the group structure.
We refer to \cite{fei_groe1} for a general treatise, and \cite{da_st_te1}, \cite{da_st_te2}, \cite{da_fo_rau_st_te} for the case of homogeneous spaces and quotients.

\bigskip

In the present case, we take a more geometric approach and construct a voice transform on a hyperbolic manifold -- the complex ball -- by means of translations and modulations.
The former are given by automorphisms (Möbius transformations) of the manifold, while the latter are provided by the kernel of the Fourier transform in the sense of Helgason \cite{helg_groups}, \cite{helg_geom}.
In absence of a related group representation, the algebraic tools in this context are replaced by suitable regularity and decay properties.
These allow for a discretization, or sampling, of this transform similar as the above.

\bigskip

We restrict ourselves to this basic setting, i.e., we do not deal with several possible generalizations, such as the case of weighted measures \cite{zhang}.
We refer to \cite{liu_peng} and \cite{ferreira_moebgyro} for corresponding generalizations of the Fourier transform in the case of the ball in \emph{real} $n$-dimensional space.

An approach to sampling based on eigenfunctions of differential operators, which also covers the ball, can be found in \cite{pesenson_average_samp} and \cite{fei_pes_fue}.

\bigskip

This work is structured as follows.
Section \ref{sct:preliminaries} lists some known facts about the hyperbolic geometry of the ball and introduces the Helgason-Fourier transform.

Section \ref{sct:timefreq} is concerned with the voice transform, for which the Euclidean STFT serves as a template.
Two versions of coorbit spaces are introduced, one replicating modulation spaces (subspaces of tempered distributions), the other aiming at reconstructing the more general setting of group representations.
The inversion formula is derived on both versions of these spaces.

In the last section \ref{sct:frames} we obtain two atomic decompositions on coorbit spaces, and examine the conditions of those leading to Banach frames.

\section{Preliminaries}
\label{sct:preliminaries}

In this section we establish the basic setting and notation.
Most results can be found in \cite{rudin_ball} and \cite{helg_groups}.

Throughout this article, $B = \{ z = (z_1, \dots, z_n)\tp \in \C^n : \left| z \right| < 1 \}$ will denote the open unit ball in $n$-dimensional complex space with origin $o$.
Let $\Aut(B)$ denote the group of biholomorphic functions on $B$, and let $\U(n)$ be the subgroup of unitary mappings restricted to $B$.
Then $\Aut(B)$ and the group
\begin{equation*}
  \SU(n,1) = \{ A \in \GL(n+1,\C) : \det A = 1 \text{ and } A^* J A = J \}
\end{equation*}
of unitary matrices leaving invariant the quadratic form
\begin{equation*}
  x = (x_1, \dots, x_{n+1}) \mapsto \bar{x}\tp J x = \left|x_1 \right|^2 + \ldots + \left| x_n \right|^2 - \left| x_{n+1} \right|^2,
\end{equation*}
are isomorphic.
Writing every $\varphi \in \SU(n,1)$ as a block matrix
\begin{equation*}
  \varphi = \begin{pmatrix} Q & b \\ c\tp & d \end{pmatrix}
\end{equation*}
with $Q \in \C^{n \times n}$ and $b,c \in \C^n$, this group acts on $B$ by means of Möbius transformations
\begin{equation*}
  z \mapsto \varphi(z) = \frac{Qz + b}{d + \left< z, c \right>},
\end{equation*}
where $\left< z, c \right> = z_1 \overline{c_1} + \dots + z_n \overline{c_n}$.
Let $a \in B$, $a \ne o$, and set $Q_a = \left( s_a - 1 \right) a\, \bar{a}\tp /|a|^2 - s_aI_n$, where $s_a = (1 - |a|^2)^{1/2}$ and $I_n$ is the $n$-size identity matrix.
Defining
\begin{equation*}
  \varphi_o = -J = -\begin{pmatrix} I_n & 0 \\ 0 & -1 \end{pmatrix}, \quad \varphi_a = \begin{pmatrix} Q_a & a \\ -a\tp & 1 \end{pmatrix} \quad (a \ne o),
\end{equation*}
every $\varphi \in \Aut(B)$ can be written (up to a normalizing factor) as $\varphi = u \circ \varphi_a$, where $a \in B$ and $u \in \U(n)$.
The mappings $\varphi_a$ take the role of translations on $B$, and possess the following properties.

\begin{thm}
  \label{thm:moeb_basics}
  For each $a \in B$ the following holds true:
\begin{enumerate}
  \item For all $z, w \in \overline{B}$,
    \begin{equation}
      1 - \left< \varphi_a(z), \varphi_a(w) \right>  = \frac{\left( 1 - \left< a, a \right>  \right) \left( 1 - \left< z, w \right>  \right)}{\left( 1 - \left< z, a \right>  \right) \left( 1 - \left< a, w \right>  \right)}.
    \label{eq:moeb_prod}
    \end{equation}
  \item $\varphi_a$ is an involution on $B$, and a homeomorphism $\overline{B} \to \overline{B}$.
\end{enumerate}
\end{thm}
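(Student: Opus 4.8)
The plan is to lift everything to homogeneous coordinates on $\C^{n+1}$ and to use that $\varphi_a$ is $J$-unitary up to a scalar. For $z \in \overline{B}$ set $\tilde z = (z,1)\tp$; a direct computation gives $\tilde w^{*} J \tilde z = \langle z, w\rangle - 1$, where $*$ denotes conjugate transpose, so that $1 - \langle z, w\rangle = -\,\tilde w^{*} J \tilde z$. The Möbius map is obtained from the linear action $\tilde z \mapsto \varphi_a \tilde z$ by rescaling the last coordinate to $1$, and since that coordinate is exactly the denominator $1 - \langle z, a\rangle$, one has $\widetilde{\varphi_a(z)} = (1 - \langle z, a\rangle)^{-1}\varphi_a \tilde z$. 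Granting the scalar identity $\varphi_a^{*} J \varphi_a = (1 - |a|^2)\,J$ and using $\overline{1 - \langle w, a\rangle} = 1 - \langle a, w\rangle$, the invariance of the Hermitian form then gives
\begin{equation*}
  1 - \langle \varphi_a(z), \varphi_a(w)\rangle = \frac{-\,\tilde w^{*}\varphi_a^{*} J \varphi_a \tilde z}{(1 - \langle z, a\rangle)(1 - \langle a, w\rangle)} = \frac{(1 - |a|^2)\,(1 - \langle z, w\rangle)}{(1 - \langle z, a\rangle)(1 - \langle a, w\rangle)},
\end{equation*}
which is \eqref{eq:moeb_prod} since $|a|^2 = \langle a, a\rangle$. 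The overall scalar on $\varphi_a$ is immaterial here, as it cancels upon dehomogenizing; both sides are rational with denominators that do not vanish on $\overline{B} \times \overline{B}$ (see below), so the identity holds on all of $\overline{B}$.

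The heart of the matter is the scalar identity $\varphi_a^{*} J \varphi_a = (1 - |a|^2) J$, which I would reduce to three properties of the block $Q_a$: that $Q_a$ is Hermitian, that $Q_a a = -a$, and that $Q_a^{2} = a a^{*} + (1 - |a|^2) I_n$. All three follow by elementary manipulation from the explicit formula for $Q_a$ together with $s_a^{2} = 1 - |a|^2$; the last one uses $(a a^{*})^{2} = |a|^2\,a a^{*}$. Multiplying out the blocks of $\varphi_a^{*} J \varphi_a$ and inserting these relations produces $(1 - |a|^2) J$ directly. The main obstacle here is purely bookkeeping: the action uses the Hermitian pairing $\langle\cdot,\cdot\rangle$ whereas the matrix blocks are written with the plain transpose, so one must be careful to conjugate the lower block correctly (equivalently, to take the representative of $\varphi_a$ that genuinely induces the stated Möbius map) before forming $\varphi_a^{*} J \varphi_a$.

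For the second assertion I would reuse these same relations. The identical block computation shows $\varphi_a^{2} = (1 - |a|^2) I_{n+1}$, a scalar multiple of the identity, so the induced Möbius transformation satisfies $\varphi_a \circ \varphi_a = \id$ and $\varphi_a$ is an involution. Specializing \eqref{eq:moeb_prod} to $w = z$ yields
\begin{equation*}
  1 - |\varphi_a(z)|^2 = \frac{(1 - |a|^2)(1 - |z|^2)}{\left| 1 - \langle z, a\rangle \right|^2};
\end{equation*}
by Cauchy--Schwarz $|\langle z, a\rangle| \le |z|\,|a| \le |a| < 1$ for every $z \in \overline{B}$, so the denominator never vanishes and $\varphi_a$ is a well-defined continuous map on $\overline{B}$. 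The right-hand side is positive on $B$ and zero on $\partial B$, hence $\varphi_a$ maps $B$ into $B$ and $\partial B$ into $\partial B$, so $\varphi_a(\overline{B}) \subseteq \overline{B}$. Being its own inverse, $\varphi_a$ is then a continuous bijection of $\overline{B}$ with continuous inverse, i.e.\ a homeomorphism.
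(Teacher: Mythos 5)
Your proof is correct. Note that the paper itself offers no proof of this theorem: it is quoted from the literature (the standard reference here is \cite{rudin_ball}, Theorem 2.2.2, where \eqref{eq:moeb_prod} is established by direct expansion of $\varphi_a(z) = (a - P_a z - s_a(I-P_a)z)/(1 - \left< z,a \right>)$, with $P_a$ the orthogonal projection onto $\C a$, followed by a separate verification of the involution property). Your route through homogeneous coordinates and the scalar $J$-unitarity $\varphi_a^* J \varphi_a = (1-|a|^2)J$ is genuinely different and arguably better matched to this paper, which introduces $\SU(n,1)$ and the form $J$ immediately before stating the theorem but never exploits them: in your argument a single block computation with the three relations for $Q_a$ simultaneously yields the identity \eqref{eq:moeb_prod} (via $-\tilde w^*J\tilde z = 1 - \left< z,w \right>$) and, through $\varphi_a^2 = (1-|a|^2)I_{n+1}$, the involution, whereas Rudin's computation buys elementarity at the price of two separate verifications. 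You were also right to flag the conjugation issue in the lower-left block: the paper's action formula applies $\left< z, c\right>$ to the block $c\tp$, so the matrix genuinely inducing the Möbius map has $-\bar a\tp$ there, and without this correction the form identity fails. Two small points to tidy up: the case $a = o$ must be treated separately (your $Q_a$ involves division by $|a|^2$, but $\varphi_o(z) = -z$ makes both claims trivial), and in the involution step you implicitly use that the projective action composes, i.e.\ $\varphi_{MM'} = \varphi_M \circ \varphi_{M'}$ wherever the denominators are nonzero — which your Cauchy--Schwarz bound guarantees on $\overline{B}$ — so a one-line remark there would close the argument completely.
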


With $\U(n) \subset \Aut(B)$ being the stabilizer subgroup of the origin $o \in B$, the ball can be viewed as the homogeneous space $\Aut(B) / \U(n)$.
Let $l$ be the Lebesgue measure on $\C^n$, normalized so that $l(B) = 1$, then the measure $\mu$ given by
\begin{equation*}
  \dd\mu(z) = \bigl( 1-|z|^2 \bigr)^{-n-1} \dd l(z)
\end{equation*}
is invariant under the action of $\Aut(B)$ and, up to constants, unique with this property.
Let $\sigma$ be the invariant Borel measure on the boundary sphere $S = \partial B$ with $\sigma(S) = 1$.
The Haar measures on $G = \Aut(B)$ and $K = \U(n)$ can then be normalized so that the integration with respect to $\mu$ and $\sigma$ becomes
\begin{equation*}
  \int_G f(g(o))\, \dd g = \int_B f(z)\, \dd\mu(z) \qquad \text{and} \qquad
  \int_K h(k\eta)\, \dd k = \int_S h(\zeta)\, \dd\sigma(\zeta).
\end{equation*}

Let $\Delta = 4 \sum_j \partial_j \overline{\partial}_j$ denote the Laplacian on $\C^n$ (set $\partial_j = \partial/\partial z_j$ and $\overline{\partial}_j = \partial/\partial \bar{z}_j$), then the invariant Laplacian $\LB$ on $B$ is given by
\begin{equation*}
  \LB f (z) = \Delta (f \circ \varphi_z)(o), \qquad f \in C^2(B)
\end{equation*}
or, more explicitly,
\begin{equation*}
  \LB f(z) = 4 \bigl( 1 - |z|^2 \bigr) \sum_{j,k} \bigl( \delta_{j,k} - z_j \bar{z}_k \partial_j \overline{\partial}_k \bigr) f(z).
\end{equation*}
This operator commutes with the action of $\Aut(B)$ and is self adjoint
(Green's identity for a wider class of operators $\Delta_{\alpha,\beta}$ is studied in \cite{ahern_bru_casc}, here $\LB = \Delta_{0,0}$ is a special member).

Like in the Euclidean case, the Fourier transform on the ball relies on (formal) eigenfunctions of the Laplace-Beltrami operator.
For fixed $\lambda \in \C$, $\zeta \in S$, consider the functions
\begin{equation*}
  z \mapsto P_{\lambda,\zeta}(z) = \left( \frac{1 - |z|^2}{\bigl| 1 - \left< z, \zeta \right> \bigr|^2}\right)^{\frac{n+i\lambda}{2}}
\end{equation*}
for $z \in B$.
They generalize the notion of plane waves to the ball, being of the form
\begin{equation*}
  P_{\lambda,\zeta}(z) = e^{(n+i\lambda) \xi(z,\zeta)}
\end{equation*}
where $\xi(z,\zeta)$ denotes the hyperbolic distance of the origin to the sphere tangential to $S$ in $\zeta$ and passing through $z \in B$.
(these \emph{horospheres} are the analog of hyperplanes in Euclidean space.)
The particular choice of the exponent $(n+i\lambda)/2$ is justified below.

A function $f$ on $B$ is called \emph{radial}, if $f \circ u = f$ for every $u \in \U(n)$.
Function spaces consisting of radial functions are indicated by the subscript $\natural$.
The functions $P_{\lambda, \zeta}$ admit the following characterization of radial eigenfunctions of $\LB$ (or \emph{spherical functions}).

\begin{thm}
  \label{thm:sph_func_unique}
  \hspace{0ex}
  \begin{enumerate}[(i)]
    \item For every $\lambda \in \C$, $\zeta \in S$,
      \begin{equation*}
        \LB P_{\lambda, \zeta} = -(n^2 + \lambda^2) P_{\lambda, \zeta}.
      \end{equation*}
    \item Every radial $g \in C_\natural^2(B)$ which satisfies $\LB g = -(n^2 + \lambda^2) g$ is a constant multiple of the function
  \begin{equation*}
    \phi_\lambda(z) = \int_S P_{\lambda, \zeta}(z)\, \dd\sigma(\zeta).
  \end{equation*}
  \end{enumerate}
\end{thm}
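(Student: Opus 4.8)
The plan is to handle (i) by reducing to the origin and then transporting the identity by a Möbius change of variables, and (ii) by reducing the eigenvalue equation for radial functions to a second-order ODE.

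For (i), I would first note that at $z=o$ the explicit form of $\LB$ collapses to the flat Laplacian, since $1-|z|^2=1$ and $\delta_{j,k}-z_j\bar z_k=\delta_{j,k}$ there; thus $\LB f(o)=4\sum_j\partial_j\overline{\partial}_j f(o)$. Writing $\nu=(n+i\lambda)/2$ and $P_{\lambda,\zeta}=(1-|z|^2)^{\nu}\,|1-\langle z,\zeta\rangle|^{-2\nu}$, a direct computation of the mixed derivatives at the origin (every term carrying a factor $z_k$ or $\bar z_j$ drops out) gives $\partial_j\overline{\partial}_k P_{\lambda,\zeta}(o)=-\nu\delta_{j,k}+\nu^2\zeta_k\bar\zeta_j$, so that, using $|\zeta|=1$,
\[
  \LB P_{\lambda,\zeta}(o)=4\nu(\nu-n)=(n+i\lambda)(i\lambda-n)=-(n^2+\lambda^2),
\]
for every $\zeta\in S$. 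To reach an arbitrary $a\in B$, I would extract from \eqref{eq:moeb_prod} the transformation law
\[
  P_{\lambda,\zeta}(\varphi_a(z))=P_{\lambda,\zeta_a}(z)\,P_{\lambda,\zeta}(a),\qquad \zeta_a:=\varphi_a(\zeta)\in S,
\]
obtained by taking $w=\zeta_a$ and $w=z$ in \eqref{eq:moeb_prod}; here $P_{\lambda,\zeta}(a)$ is a constant in $z$. Since $\LB$ commutes with the action of $\Aut(B)$, applying it to $z\mapsto P_{\lambda,\zeta}(\varphi_a(z))$ and evaluating at $z=o$ (where $\varphi_a(o)=a$) yields $\LB P_{\lambda,\zeta}(a)=P_{\lambda,\zeta}(a)\,\LB P_{\lambda,\zeta_a}(o)=-(n^2+\lambda^2)P_{\lambda,\zeta}(a)$, which is the claim.

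For (ii), I would first verify that $\phi_\lambda$ is an admissible eigenfunction. It is radial, because $P_{\lambda,\zeta}(uz)=P_{\lambda,u^{-1}\zeta}(z)$ for $u\in\U(n)$ together with the $\U(n)$-invariance of $\sigma$; it satisfies the eigenvalue equation by part (i) and differentiation under the integral sign; and $\phi_\lambda(o)=\sigma(S)=1\neq0$. For uniqueness, I would insert a radial $g(z)=F(|z|^2)$ into the explicit form of $\LB$; a short computation of $\partial_j\overline{\partial}_k$ on functions of $t=|z|^2$ turns $\LB g=-(n^2+\lambda^2)g$ into the ordinary differential equation
\[
  4(1-t)\bigl[\,t(1-t)F''(t)+(n-t)F'(t)\,\bigr]=-(n^2+\lambda^2)F(t),\qquad t\in[0,1).
\]
This is a second-order linear equation, so its solutions form a two-dimensional space. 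At the regular singular point $t=0$ the indicial equation is $\rho(\rho+n-1)=0$, with exponents $\rho=0$ and $\rho=1-n$; the second exponent (or, for $n=1$, the accompanying logarithmic solution) yields a solution unbounded at $z=o$, hence excluded by the continuity of $g$. Therefore the solutions extending to $C_\natural^2(B)$ form an at most one-dimensional space, and since $\phi_\lambda$ is a nonzero member of it, every such $g$ is a constant multiple of $\phi_\lambda$.

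The derivative bookkeeping at the origin and the reduction to the radial ODE are routine; the step demanding the most care is the local analysis at $t=0$, where I must argue that a radial $C^2$ (indeed merely continuous) function forces the regular branch of the ODE and that the second fundamental solution genuinely fails this regularity for every $n\ge1$, the degenerate logarithmic case $n=1$ included.
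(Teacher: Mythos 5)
Your proof is correct, and since the paper states theorem \ref{thm:sph_func_unique} without proof (it is quoted in the preliminaries from \cite{rudin_ball} and \cite{helg_groups}), your argument essentially reproduces the standard one from those references: the transformation law you extract from \eqref{eq:moeb_prod} is precisely the addition theorem \eqref{eq:add_thm} recorded in the paper's appendix (proposition \ref{prop:add_thm}), which together with the invariance $\LB(f \circ \varphi_a) = (\LB f)\circ \varphi_a$ and your computation $\LB P_{\lambda,\zeta}(o) = 4\nu(\nu-n) = -(n^2+\lambda^2)$ gives (i), while the radial reduction to the hypergeometric-type ODE with Frobenius exponents $0$ and $1-n$ is the classical uniqueness argument, consistent with the representation \eqref{eq:spher_hypergeom} quoted in the paper. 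You also handle the delicate points correctly — the unboundedness of the $t^{1-n}$ branch whether or not a logarithm accompanies it, and the degenerate logarithmic case $n=1$ — so nothing further is needed.
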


As such, the transform
\begin{equation}
  \hat{f}(\lambda,\zeta) = \int_B f(z) P_{-\lambda, \zeta}(z) \, \dd\mu(z), \qquad f \in \D(B),
  \label{eq:def_hft}
\end{equation}
called the \emph{Helgason-Fourier transform}, is indeed a generalization of the Euclidean Fourier transform to the ball.
Here $\D(B)$ is the space of smooth functions on $B$ with compact support.
For radial functions $f \in \D_\natural(B)$ the integral in \eqref{eq:def_hft} is independent of $\zeta$ and reduces to the \emph{spherical transform}
\begin{equation*}
  \hat{f} (\lambda) = \int_B f(z) \phi_{-\lambda} (z) \, \dd\mu(z).
\end{equation*}
Note the symmetry $\phi_\lambda = \phi_{-\lambda}$, following from theorem \ref{thm:sph_func_unique} \textit{(ii)}, leading to $\hat{f}(\lambda) = \hat{f}(-\lambda)$.

The spherical functions possess a description as hypergeometric functions.
Indeed, using the orthogonality of the monomials $\zeta^\beta$ in $\L^2(S,\sigma)$, and setting $\alpha = (n+i\lambda)/2$, a computation shows (cf. \cite[Prop. 1.4.10]{rudin_ball})
\begin{equation}
  \phi_\lambda (z) = \left(1-|z|^2\right)^\alpha {}_2F_1 (\alpha, \alpha, n, |z|^2),
  \label{eq:spher_hypergeom}
\end{equation}
where
\begin{equation*}
  {}_2F_1\left( a, b, c, z \right) = \sum_{k=0}^\infty \frac{\fup{a}{k}\, \fup{b}{k}}{\fup{c}{k}\, k!}\, z^k, \qquad |z| < 1,
\end{equation*}
with $\fup{x}{k} = x (x+1) \cdots (x+k-1)$ denoting rising factorials.
Another representation of $\phi_\lambda$ is obtained through a series expansion,
\begin{equation*}
  \phi_\lambda (\tanh r) = \hc(\lambda) \sum_{j=0}^\infty \Gamma_j(\lambda) e^{(i\lambda-n-2j)r} + \hc(-\lambda) \sum_{j=0}^\infty \Gamma_j(-\lambda) e^{(-i\lambda-n-2j)r},
\end{equation*}
where the coefficients $\Gamma_j$ can be calculated recursively from the eigenvalue equation, yielding meromorphic functions in $\lambda$.
Furthermore,
\begin{equation*}
  \hc(\lambda) = \lim_{r \to \infty} \phi_\lambda(\tanh r) e^{(n-i\lambda)r}, \qquad \Re i \lambda > 0
\end{equation*}
is Harish-Chandra's $\hc$-function.
Using \eqref{eq:spher_hypergeom}, this limit can be calculated explicitly, since
\begin{equation*}
  \lim_{r \to \infty} \left( 1-\tanh(r)^2 \right)^{(n-i\lambda)/2} e^{(n-i\lambda)r} = 2^{n-i\lambda}
\end{equation*}
and
\begin{equation*}
{}_2F_1(a,b,c,1) = \frac{\Gamma(c) \Gamma(c-a-b)}{\Gamma(c-a) \Gamma(c-b)}, \qquad \Re(c-a-b) > 0,
\end{equation*}
through which
\begin{equation}
  \hc(\lambda) = 2^{n-i\lambda} \frac{\Gamma(n) \Gamma(i\lambda)}{\Gamma \bigl(\frac{n+i\lambda}{2}\bigr)^2}, \qquad i\lambda \notin \Z_{\le 0}
  \label{eq:def_hc}
\end{equation}
is obtained by analytic extension.

The inversion formula for the spherical transform can be derived using the general strategy in \cite{helg_groups}, \cite{rosenberg}.
As a result,
\begin{equation}
  f(z) = \frac{1}{2} \int_\R \hat{f}(\lambda) \phi_\lambda(z) \left| \hc (\lambda) \right|^{-2} \dd\lambda
  \label{eq:sph_inversion}
\end{equation}
holds for every $f \in \D_\natural(B)$.
This can be used to derive the inversion formula \eqref{eq:helg_inv} for the Helgason transform, yielding
\begin{equation*}
  f(z) = \frac{1}{2} \int_\R \int_S \hat{f}(\lambda, \zeta) P_{\lambda,\zeta} (z) \, \dd\sigma(\zeta) \left| \hc(\lambda) \right|^{-2} \dd\lambda.
\end{equation*}
Multiplying with $\overline{f(z)}$ and integrating against $\dd\mu(z)$, we obtain the Plancherel identity
\begin{equation}
  \begin{split}
    \int_B \left| f(z) \right|^2 \dd\mu(z)
    &= \frac{1}{2} \int_\R \int_S \bigl| \hat{f}(\lambda, \zeta) \bigr|^2 \left| \hc(\lambda) \right|^{-2}  \dd\sigma(\zeta) \, \dd\lambda \\
    &= \int_{\R^+} \int_S \bigl| \hat{f}(\lambda, \zeta) \bigr|^2 \left| \hc(\lambda) \right|^{-2}  \dd\sigma(\zeta) \, \dd\lambda.
  \end{split}
  \label{eq:helg_plancherel}
\end{equation}

To simplify notation, we set
\begin{equation*}
  \hat{B} = S \times \R \quad \text{and}\quad 
  \dd\nu(\lambda,\zeta) = \frac{1}{2} \left| \hc(\lambda) \right|^{-2} \dd\sigma(\zeta) \dd\lambda.
\end{equation*}
Following theorem \ref{thm:FT_isometry}, the Helgason transform is readily established as an isometry from $\L^2 (B,\mu)$ to $\L^2(\hat{B},\nu)$.

\section{Time-Frequency Analysis}
\label{sct:timefreq}

The aim of this section is to introduce a windowed Fourier transform based on the Helgason transform, by adapting the construction scheme for the Euclidean short-time Fourier transform, as seen in \cite{groechenig}.
First, we consider analogs of time-frequency shifts, which comprise the unitary representation of the Heisenberg group used for the Euclidean STFT.

In the present case, translations are generated by the points in $B$ and take the form $f \to f \circ \varphi_z$, while each point $b = (\lambda, \zeta)$ in $\hat{B}$ provides a modulation of the form $f \to P_b \cdot f = P_{\lambda,\zeta} \cdot f$.
Combined, these form the mapping
\begin{equation*}
  \rho: B \times \hat{B} \to \mathcal{L}(\mathcal{Y}), \quad
  \rho(z,b) f = P_{b} \cdot (f \circ \varphi_z),
\end{equation*}
which, for every $(z,b)$ in phase space $B \times \hat{B}$, yields a linear operator acting on an appropriate function space $\mathcal{Y}$ (yet to be determined).
For a fixed, nonzero window function $\psi$, we define the \emph{voice transform} $V_\psi$ by
\begin{equation*}
  V_\psi f (z,b) = \left< f, \rho(z,b) \psi \right>
  = \int_B f(w) P_{-\lambda,\zeta}(w) \overline{\psi (\varphi_z(w))} \, \dd\mu(w).
\end{equation*}
The space $\L_\natural^2(B,\mu)$ of radial $\psi \in \L^2(B,\mu)$ is a suitable reservoir of window functions for $V_\psi$ to be well-defined for all $f \in \L^2(B,\mu)$.

\begin{thm}
  \label{thm:orth_rel}
  The orthogonality relations
  \begin{equation}
    \left< V_{\psi_1} f_1, V_{\psi_2} f_2 \right>_{\L^2(\mu \otimes \nu)} = \overline{\left< \psi_1, \psi_2 \right>}_{\L^2(\mu)} \left< f_1, f_2 \right>_{\L^2(\mu)}
    \label{eq:orth_rel}
  \end{equation}
    hold for all $f_1, f_2 \in \L^2(B, \mu)$ and $\psi_1, \psi_2 \in \L_\natural^2(B,\mu)$.
\end{thm}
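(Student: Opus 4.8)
The plan is to mimic the proof of Moyal's orthogonality relations for the Euclidean STFT, with the Helgason transform in place of the Euclidean Fourier transform and the radiality of the windows supplying the missing group structure. First I would freeze the translation variable $z \in B$ and introduce, for fixed $z$, the functions
\[
  g_j^{z}(w) = f_j(w)\, \overline{\psi_j(\varphi_z(w))}, \qquad j = 1,2,
\]
so that, comparing the definition of $V_{\psi_j}$ with \eqref{eq:def_hft}, one has $V_{\psi_j} f_j(z, \cdot) = \widehat{g_j^{z}}$. Applying the Plancherel identity \eqref{eq:helg_plancherel} in its polarized form (the Helgason transform being a linear isometry onto $\L^2(\hat{B}, \nu)$) to the inner integral in $b = (\lambda,\zeta)$ then gives, for each fixed $z$,
\[
  \int_{\hat{B}} V_{\psi_1} f_1(z,b)\, \overline{V_{\psi_2} f_2(z,b)}\, \dd\nu(b)
  = \left< g_1^{z}, g_2^{z} \right>_{\L^2(\mu)}
  = \int_B f_1(w)\overline{f_2(w)}\, \psi_2(\varphi_z(w)) \overline{\psi_1(\varphi_z(w))}\, \dd\mu(w).
\]

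Next I would integrate this identity over $z \in B$ against $\dd\mu(z)$ and interchange the order of integration by Fubini, obtaining
\[
  \left< V_{\psi_1} f_1, V_{\psi_2} f_2 \right>_{\L^2(\mu\otimes\nu)}
  = \int_B f_1(w)\overline{f_2(w)}\, I(w)\, \dd\mu(w), \qquad
  I(w) = \int_B \psi_2(\varphi_z(w)) \overline{\psi_1(\varphi_z(w))}\, \dd\mu(z).
\]
It then suffices to show that $I(w)$ equals the constant $\overline{\left< \psi_1, \psi_2 \right>}_{\L^2(\mu)}$, since factoring it out of the remaining integral leaves exactly $\left< f_1, f_2 \right>_{\L^2(\mu)}$.

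The core step is the evaluation of $I(w)$, where the symmetry of the pseudo-hyperbolic distance is decisive. Setting $a = w$ and taking both remaining arguments equal to $z$ in \eqref{eq:moeb_prod} yields
\[
  1 - \left| \varphi_w(z) \right|^2 = \frac{(1-|w|^2)(1-|z|^2)}{\left| 1 - \left< z, w \right> \right|^2},
\]
whose right-hand side is symmetric in $z$ and $w$; hence $\left| \varphi_z(w) \right| = \left| \varphi_w(z) \right|$. Because $\psi_1,\psi_2 \in \L^2_\natural(B,\mu)$ are radial, their values depend only on this modulus, so $\psi_j(\varphi_z(w)) = \psi_j(\varphi_w(z))$ and
\[
  I(w) = \int_B \psi_2(\varphi_w(z)) \overline{\psi_1(\varphi_w(z))}\, \dd\mu(z).
\]
The substitution $v = \varphi_w(z)$, a measure-preserving involution of $B$ onto itself by Theorem \ref{thm:moeb_basics} together with the invariance of $\mu$, now gives $I(w) = \int_B \psi_2(v)\overline{\psi_1(v)}\,\dd\mu(v) = \overline{\left< \psi_1, \psi_2 \right>}_{\L^2(\mu)}$, independent of $w$, which completes the argument.

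The main obstacle is analytic rather than algebraic: justifying the pointwise-in-$z$ use of Plancherel (which needs $g_j^{z} \in \L^2(B,\mu)$) and the Fubini interchange. Since radial $\L^2$ windows need not be bounded, I would first establish the identity on a dense subspace of well-behaved data -- say $f_j \in \D(B)$ and bounded, compactly supported radial $\psi_j$ -- where $g_j^{z} \in \L^1 \cap \L^2$ and every integral converges absolutely, so that both the change of variables and Fubini are unproblematic. The general case then follows by continuity, both sides depending continuously on $f_1, f_2 \in \L^2(B,\mu)$ and $\psi_1, \psi_2 \in \L^2_\natural(B,\mu)$ through the Cauchy--Schwarz estimates for $V$.
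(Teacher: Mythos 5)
Your proposal is correct and follows essentially the same route as the paper's proof: pointwise-in-$z$ Plancherel for the Helgason transform applied to $w \mapsto f(w)\overline{\psi(\varphi_z(w))}$, a Fubini interchange, the symmetry $\left|\varphi_z(w)\right| = \left|\varphi_w(z)\right|$ from \eqref{eq:moeb_prod} combined with radiality of the windows, invariance of $\mu$ under $\varphi_w$, and finally a density/continuity extension from well-behaved windows to all of $\L^2_\natural(B,\mu)$. The only cosmetic difference is that the paper works with windows in $\D_\natural(B)$ and arbitrary $f_j \in \L^2(B,\mu)$ throughout, extending one window at a time, whereas you also restrict the $f_j$ initially and extend in all four arguments at once; both variants hinge on the same isometry bound obtained on the dense class.
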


\begin{proof}
  First, we show \eqref{eq:orth_rel} for windows in $\D_\natural(B)$, this being a dense subspace of $\L_\natural^2(\mu)$.
  In this case, since
  \begin{equation*}
    \int_B \bigl| f(z) \overline{\psi (\varphi_w(z))} \bigr|^2 \dd\mu(z) \le \left\Vert \psi \right\Vert^2_{\infty} \left\Vert f \right\Vert^2_{\L^2(\mu)},
  \end{equation*}
  we have $f \cdot \overline{\psi \circ \varphi_w} \in \L^2(\mu)$ for every $f \in \L^2(\mu)$, $\psi \in \D_\natural(B)$, $w \in B$.
  Applying the Plancherel identity for $V_\psi f (w,b) = (f \cdot \overline{\psi \circ \varphi_w})\sphat\, (b)$ we obtain
  \begin{align*}
    \left< V_{\psi_1} f_1, V_{\psi_2} f_2 \right>
    &= \int_B \int_{\hat{B}} V_{\psi_1} f_1(w,b) \overline{V_{\psi_2} f_2 (w,b)}\, \dd\nu(b) \dd\mu(w) \\
    &= \int_B \int_{\hat{B}} (f_1 \cdot \overline{\psi_1 \circ \varphi_w})\sphat\, (b) \overline{(f_2 \cdot \overline{\psi_2 \circ \varphi_w})\sphat\, (b)}\, \dd\nu(b) \dd\mu(w) \\
    &= \int_B \int_B f_1(z) \overline{\psi_1(\varphi_w(z))}\, \overline{f_2(z)} \psi_2(\varphi_w(z))\, \dd\mu(z) \dd\mu(w) \\
    &= \int_B f_1(z) \overline{f_2(z)} \int_B \overline{\psi_1(\varphi_z(w))} \psi_2(\varphi_z(w))\, \dd\mu(z) \dd\mu(w) \\
    &= \int_B f_1(z) \overline{f_2(z)} \int_B \overline{\psi_1(w)} \psi_2(w)\, \dd\mu(w) \dd\mu(z) \\
    &= \left< f_1, f_2 \right> \overline{\left< \psi_1, \psi_2 \right>}.
  \end{align*}
  Here we have used $\left| \varphi_z(w) \right| = \left| \varphi_w(z) \right|$ (cf. \eqref{eq:moeb_prod}) and the symmetry of $\psi_1, \psi_2$.
  Finally, by first extending the operator
  \begin{equation*}
    \psi_1 \mapsto \left< V_{\psi_1} f_1, V_{\psi_2} f_2 \right>
  \end{equation*}
  to $\psi_1 \in \L_\natural^2(B,\mu)$ while keeping $\psi_2 \in \D_\natural(B)$ fixed, and then
  \begin{equation*}
    \psi_2 \mapsto \left< V_{\psi_1} f_1, V_{\psi_2} f_2 \right>
  \end{equation*}
  to $\psi_2 \in \L_\natural^2(B,\mu)$, with $\psi_1 \in \L_\natural^2(B,\mu)$ fixed, the equality is proved to hold in general.
\end{proof}

Theorem \ref{thm:orth_rel} provides the following inversion formula.

\begin{thm}
  Let $\psi, \gamma \in \L^2_\natural(B,\mu)$ be two windows satisfying $\left< \psi, \gamma \right> \ne 0$.
  Then for every $f \in \L^2(B,\mu)$ the identity
  \begin{equation*}
    f = \left< \gamma, \psi \right>^{-1} \int_B \int_{\hat{B}} V_\psi f (z,b) \rho(z,b) \gamma \, \dd\nu(b)\dd\mu(z)
  \end{equation*}
  holds in the weak sense.
\end{thm}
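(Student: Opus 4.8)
The plan is to derive this inversion formula as a direct consequence of the orthogonality relations in Theorem \ref{thm:orth_rel}, which is the standard weak-sense reconstruction argument for voice transforms. First I would fix an arbitrary test function $h \in \L^2(B,\mu)$ and interpret the claimed identity weakly, meaning I must show that
\begin{equation*}
  \left< f, h \right> = \left< \gamma, \psi \right>^{-1} \int_B \int_{\hat{B}} V_\psi f(z,b) \left< \rho(z,b)\gamma, h \right> \dd\nu(b) \dd\mu(z).
\end{equation*}
The key observation is that $\overline{\left< \rho(z,b)\gamma, h \right>} = \left< h, \rho(z,b)\gamma \right> = V_\gamma h(z,b)$, so the double integral on the right is precisely $\int_B \int_{\hat{B}} V_\psi f(z,b)\, \overline{V_\gamma h(z,b)}\, \dd\nu(b)\dd\mu(z) = \left< V_\psi f, V_\gamma h \right>_{\L^2(\mu\otimes\nu)}$.

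Next I would apply Theorem \ref{thm:orth_rel} with $f_1 = f$, $\psi_1 = \psi$, $f_2 = h$, $\psi_2 = \gamma$, which gives
\begin{equation*}
  \left< V_\psi f, V_\gamma h \right> = \overline{\left< \psi, \gamma \right>}\, \left< f, h \right> = \left< \gamma, \psi \right> \left< f, h \right>,
\end{equation*}
using the conjugate symmetry $\overline{\left< \psi, \gamma \right>} = \left< \gamma, \psi \right>$ of the inner product. Multiplying by the scalar $\left< \gamma, \psi \right>^{-1}$ (which is well defined by the hypothesis $\left< \psi, \gamma \right> \ne 0$) yields exactly $\left< f, h \right>$, matching the left-hand side. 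Since $h \in \L^2(B,\mu)$ was arbitrary, this establishes the identity in the weak sense.

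The only genuine subtlety, and the step I expect to require the most care, is justifying that the right-hand side is a well-defined weak integral at all, i.e.\ that pairing the vector-valued integral against $h$ commutes with the integration over phase space. This is controlled by the Cauchy--Schwarz estimate $\left| \left< V_\psi f, V_\gamma h \right> \right| \le \left\Vert V_\psi f \right\Vert_{\L^2(\nu\otimes\mu)} \left\Vert V_\gamma h \right\Vert_{\L^2(\nu\otimes\mu)}$, both norms being finite by the orthogonality relations (taking $f_1=f_2$ and $\psi_1=\psi_2$ shows $\left\Vert V_\psi f \right\Vert = \left\Vert\psi\right\Vert\left\Vert f\right\Vert$, and similarly for $V_\gamma h$). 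Thus the defining functional $h \mapsto \left< V_\psi f, V_\gamma h \right>$ is bounded on $\L^2(B,\mu)$, so the weak integral represents a genuine element of $\L^2(B,\mu)$, and the computation above identifies that element as $\left<\gamma,\psi\right> f$. I would remark that no group-representation machinery is needed here beyond the orthogonality relations, which is precisely why the geometric construction suffices.
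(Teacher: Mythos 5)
Your proposal is correct and follows essentially the same route as the paper: interpret the integral weakly against an arbitrary $h \in \L^2(B,\mu)$, recognize the double integral as $\left< V_\psi f, V_\gamma h \right>_{\L^2(\mu\otimes\nu)}$, and apply the orthogonality relations of Theorem \ref{thm:orth_rel} with $\overline{\left< \psi, \gamma \right>} = \left< \gamma, \psi \right>$ to conclude $\bigl< \tilde{f}, h \bigr> = \left< f, h \right>$. Your Cauchy--Schwarz justification that $h \mapsto \left< V_\psi f, V_\gamma h \right>$ is a bounded functional, so the weak integral defines a genuine element of $\L^2(B,\mu)$, is a detail the paper states without proof, and it is a welcome addition rather than a deviation.
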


\begin{proof}
  The vector-valued integral
  \begin{equation*}
    \tilde{f} = \left< \gamma, \psi \right>^{-1} \iint_{B \times \hat{B}} V_\psi f (z,b) \rho(z,b) \gamma \, \dd\mu(z) \, \dd\nu(b)
  \end{equation*}
  is well defined on $\L^2(B,\mu)$ and by theorem \ref{thm:orth_rel} satisfies
  \begin{align*}
    \bigl< \tilde{f}, h \bigr>
    &= \left< \gamma, \psi \right>^{-1} \int_{\hat{B}} \int_B  V_\psi f (z,b) \overline{ \left< h, \rho(z,b) \gamma \right>}\, \dd\mu(z)\, \dd\nu(b) \\
    &= \left< \gamma, \psi \right>^{-1} \bigl< V_\psi f, V_\gamma h \bigr> \\
    &= \left< \gamma, \psi \right>^{-1} \left< f, h \right> \left< \gamma, \psi \right> \\
    &= \left< f, h \right>
  \end{align*}
  for all $h \in \L^2(B,\mu)$. This implies $\tilde{f} = f$.
\end{proof}

\subsection{Weights and Distributions}

We take a parallel approach to \cite{groechenig} and extend the voice transform to tempered distributions.
This requires the use of weight functions on $B$ and $\hat{B}$.

A weight is an almost everywhere positive and measurable function, but there is no loss in considering solely continuous weights.
On $B$, such a function $\kappa$ is called submultiplicative, if
\begin{equation*}
  \kappa(\varphi_w(z)) \le \kappa(w) \kappa(z)
\end{equation*}
holds for all $z, w \in B$.
On $\hat{B}$, we call a weight $v$ radial, if $v(\lambda,\zeta) = v(|\lambda|)$ and submultiplicative, if in addition
\begin{equation*}
  v(\lambda + \eta) \le v(\lambda) v(\eta)
\end{equation*}
holds for all $\lambda, \eta \in \R$.
Note that $\kappa, v \ge 1$, if both are continuous and submultiplicative.
If, in addition, both are radial, then for any permutation $(i,j,k)$ of $\{ 1, 2, 3 \}$,
\begin{equation}
  \kappa(z_i) \le \kappa(z_j) \kappa(z_k)
  \label{eq:kappa_perm}
\end{equation}
holds whenever $z_1 = \varphi_{z_2}(z_3)$, and similarly
\begin{equation}
  v(\lambda_i) \le v(\lambda_j) v(\lambda_k),
  \label{eq:v_perm}
\end{equation}
if $\lambda_1 = \lambda_2 \pm \lambda_3$.
Examples of functions having all of these properties include the families
\begin{alignat*}{2}
  \kappa_s &: B \to \R^+, &\qquad \kappa_s(z) &= \bigl( 1 -|z|^2 \bigr)^{-s} \\
  v_r &: \hat{B} \to \R^+, &  v_r(\lambda,\zeta) &= \bigl( 1 + |\lambda|^2 \bigr)^{r/2}
\end{alignat*}
which will be our main instruments for measuring (fast) decay and (slow) growth.
(Like $v_r$, the weight $\kappa_s$ could be called \emph{polynomial}; it becomes apparent that $\kappa_s$ corresponds to a polynomial weight when transferred to the hyperboloid model.)

Regarding growth, we shall need the following result concerning the asymptotics of the $\hc$-function.

\begin{prop}
  \label{thm:hc_poly}
  There exist constants $C_1, C_2$ such that
  \begin{equation*}
    \left| \hc (\lambda) \right|^{-1} \le C_1 + C_2 \left| \lambda \right|^{n-1/2}, \qquad \Re(i\lambda) \ge 0.
  \end{equation*}
\end{prop}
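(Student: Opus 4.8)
The plan is to read the growth directly off the explicit formula \eqref{eq:def_hc} and reduce everything to the classical asymptotics of the Gamma function. Writing $w = i\lambda$, the hypothesis $\Re(i\lambda)\ge 0$ places $w$ in the closed right half-plane, and \eqref{eq:def_hc} gives
\[
  \left| \hc(\lambda) \right|^{-1} = \frac{1}{2^n \Gamma(n)} \cdot \frac{\bigl| \Gamma\bigl( \tfrac{n+w}{2} \bigr) \bigr|^2 \, 2^{\Re w}}{\left| \Gamma(w) \right|},
\]
since $\left| 2^{-w} \right| = 2^{-\Re w}$. The factor $2^{\Re w}$ grows exponentially, so the first task is to exhibit its cancellation against the Gamma quotient; only after that cancellation is carried out should the surviving polynomial factor $\left| \lambda \right|^{n-1/2}$ become visible.

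To make this cancellation transparent I would apply Legendre's duplication formula $\Gamma(w) = 2^{w-1}\pi^{-1/2}\Gamma(\tfrac{w}{2})\Gamma(\tfrac{w+1}{2})$ to the denominator. This exactly absorbs the $2^{\Re w}$ and reduces the quotient, up to the constant $2\sqrt{\pi}$, to
\[
  \frac{\Gamma\bigl( s + \tfrac{n}{2} \bigr)^2}{\Gamma(s)\, \Gamma\bigl( s + \tfrac12 \bigr)}, \qquad s = \tfrac{w}{2},
\]
a ratio of Gamma functions whose arguments differ only by fixed constants. At this point the standard asymptotic $\Gamma(s+a)/\Gamma(s+b) = s^{a-b}\bigl( 1 + O(1/|s|) \bigr)$, valid uniformly in any sector $\left| \arg s \right| \le \pi - \delta$, applies: pairing the two factors $\Gamma(s+\tfrac{n}{2})$ with $\Gamma(s)$ and with $\Gamma(s+\tfrac12)$ respectively yields $s^{n/2} \cdot s^{(n-1)/2} = s^{\,n-1/2}$. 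Taking moduli and recalling $|s| = |\lambda|/2$ then gives $\left| \hc(\lambda) \right|^{-1} \sim c_n |\lambda|^{n-1/2}$ as $|\lambda|\to\infty$ within the half-plane.

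The crucial point, and the step I expect to require the most care, is that this asymptotic holds \emph{uniformly} over the entire half-plane rather than merely along the real axis or along individual rays. This is precisely what the hypothesis $\Re(i\lambda)\ge 0$ secures: it forces $\left| \arg s \right| \le \pi/2 < \pi - \delta$, so the quoted Gamma-ratio expansion is uniform with a uniform error term and no exceptional directions arise. One should state the ratio asymptotic in its sectorial (uniform) form and confirm the $O(1/|s|)$ remainder is controlled independently of $\arg s$ on this sector.

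Finally I would upgrade the asymptotic estimate to the stated global bound. The reciprocal $\hc^{-1}$ is holomorphic on the closed half-plane $\Re(i\lambda) \ge 0$: the reciprocal Gamma $1/\Gamma(i\lambda)$ and the exponential $2^{i\lambda - n}$ are entire, while the only poles of $\Gamma(\tfrac{n+i\lambda}{2})^2$ sit at $i\lambda = -n - 2k$ with $k \ge 0$, all lying in $\Re(i\lambda) < 0$. Hence $\left| \hc(\lambda) \right|^{-1}$ is continuous, and therefore locally bounded, on the region (at $\lambda = 0$ it merely vanishes). Combining local boundedness with the growth $\sim c_n|\lambda|^{n-1/2}$ at infinity shows that $\sup \left| \hc(\lambda) \right|^{-1}/(1 + |\lambda|^{n-1/2})$ is finite over the half-plane, which is exactly the claim for a suitable choice of $C_1, C_2$.
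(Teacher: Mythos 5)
Your proposal is correct and follows essentially the same route as the paper's proof: both apply the Legendre duplication formula to $\Gamma(i\lambda)$ to split the quotient in \eqref{eq:def_hc} into two Gamma ratios with arguments shifted by fixed constants, and then invoke the uniform sectorial asymptotic $\Gamma(z+\alpha)/\Gamma(z) \sim z^{\alpha}$ (valid for $\left|\arg z\right| \le \pi - \delta$) to extract the exponent $n/2 + (n-1)/2 = n - 1/2$. The only difference is that you spell out the final step -- holomorphy of $\hc^{-1}$ on the closed half-plane $\Re(i\lambda) \ge 0$, hence local boundedness, upgrading the asymptotic to the stated global bound -- which the paper leaves implicit; this is a welcome addition rather than a deviation.
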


\begin{proof}
  Applying the duplication formula
  \begin{equation*}
    \Gamma (z) \Gamma (z+\tfrac{1}{2}) = 2^{1-2z} \sqrt{\pi}\, \Gamma(2z)
  \end{equation*}
  to $z = i\lambda/2$, we obtain
  \begin{equation*}
    \hc(\lambda)^{-1} = 2^{i\lambda-n} \frac{\Gamma \bigl(\frac{n+i\lambda}{2} \bigr)^2}{\Gamma(n) \Gamma(i\lambda)}
    = \frac{2^{1-n}}{\sqrt{\pi}\, \Gamma(n)} \frac{\Gamma \bigl(\frac{n+i\lambda}{2} \bigr)}{\Gamma \bigl(\frac{i\lambda}{2} \bigr)} \frac{\Gamma \bigl(\frac{n+i\lambda}{2} \bigr)}{\Gamma \bigl(\frac{1+i\lambda}{2} \bigr)}.
  \end{equation*}
  Now using
  \begin{equation*}
    \lim_{|z| \to \infty} \frac{\Gamma(z+\alpha)}{\Gamma(z)} z^{-\alpha} = 1, \qquad \left| \arg z \right| \le \pi - \delta
  \end{equation*}
  for every $\alpha \in \C$, $\delta > 0$ (see \cite[cor. 2.6.4]{beals_wong}), it is seen that the quotients on the right satisfy
  \begin{equation*}
    \lim_{|\lambda| \to \infty} \frac{\Gamma \big(\frac{n+i\lambda}{2} \big)}{\Gamma \big( \frac{i\lambda}{2})} \left( \frac{i\lambda}{2} \right)^{-{\frac{n}{2}}} = 1, 
    \quad \text{and} \quad
    \lim_{|\lambda| \to \infty} \frac{\Gamma \bigl( \frac{n+i\lambda}{2} \bigr)}{\Gamma \bigl( \frac{1+i\lambda}{2} \bigr)} \left( \frac{1+i\lambda}{2} \right)^{-{\frac{n-1}{2}}} = 1.
  \end{equation*}
  This proves $\left| \hc(\lambda) \right|^{-1} \in O \bigl(\left|\lambda \right|^{n-1/2} \bigr)$ as $\left| \lambda \right| \to \infty$.
\end{proof}

In other terms, we have
\begin{equation}
  \left| \hc(\lambda) \right|^{-2} \sim c_0\, v_{2n-1}(\lambda)
  \label{eq:hc_poly}
\end{equation}
asymptotically as $|\lambda| \to \infty$, for a suitable constant $c_0$.

Concerning fast decay, we note that by self-adjointness of the invariant Laplacian, smoothness translates directly to fast decay of the Helgason transform; if $f \in C^k(B) \cap \L^2(B,\mu)$, then $| \hat{f}(\lambda, \zeta) | \in O(v_k(\lambda))$ as $|\lambda| \to \infty$.

Let $\mathcal{S}(B)$ be the space of all smooth functions $f$ on $B$, for which all (semi-)norms
\begin{equation*}
  \left\Vert f \right\Vert_{(s)} = \sup_{|\alpha| \le s} \left\Vert \kappa_s \partial^\alpha f \right\Vert_{\L^\infty(\mu)} = \sup_{|\alpha| \le s} \left\Vert \kappa_{s+n+1} \partial^\alpha f \right\Vert_{\infty}
\end{equation*}
are finite.
Let $\mathcal{S}'(B)$ denote its dual, the space of continuous linear functionals on $\mathcal{S}(B)$, and $\mathcal{S}_\natural(B)$ the subspace of $\mathcal{S}(B)$ consisting of radial functions.
Then $\mathcal{S}(B)$ is invariant under the action of $\rho$, so that $\mathcal{S}_\natural(B)$ will provide a suitable reservoir of window functions.

\begin{lem}
  \hspace{0ex}
  \begin{enumerate}[(i)]
    \item For each $s > 0$ there exists $C_s > 0$ such that if $g \in \mathcal{S}(B)$ and $(w,b) \in B \times \hat{B}$, then $\rho(w,b) g \in \mathcal{S}(B)$ and
    \begin{equation*}
      \left\Vert \rho(w,b) g \right\Vert_{(s)} \le C_s \kappa_{1+s}(w) v_s (b) \left\Vert g \right\Vert_{(s')},
    \end{equation*}
    provided $s' \ge 3s + n + 1$.
  \item For $\psi \in \mathcal{S}(B)$, the mapping
    \begin{equation*}
      B \times \hat{B} \to \mathcal{S}(B), \quad (w,b) \mapsto \rho(w,b) \psi
    \end{equation*}
    is continuous.
  \end{enumerate}
  \label{lem:schwartz_inv}
\end{lem}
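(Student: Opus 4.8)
The plan is to prove (i) by direct differentiation of the product $\rho(w,b)g = P_b\cdot(g\circ\varphi_w)$, and to deduce (ii) from (i) by an equicontinuity argument. For (i), fix $(w,b)$ with $b=(\lambda,\zeta)$ and a multi-index $\alpha$ with $|\alpha|\le s$. By the Leibniz rule $\partial^\alpha(\rho(w,b)g)$ is a finite sum of terms $\partial^\beta P_b\cdot\partial^{\alpha-\beta}(g\circ\varphi_w)$, so it suffices to bound each such term, weighted by $\kappa_{s+n+1}(z)$, uniformly in $z\in B$. I would first estimate the modulation factor. Writing $P_{\lambda,\zeta}(z)=\exp\bigl(\tfrac{n+i\lambda}{2}\,u(z)\bigr)$ with $u(z)=\log(1-|z|^2)-\log|1-\langle z,\zeta\rangle|^2$, the Fa\`a di Bruno formula expresses $\partial^\beta P_{\lambda,\zeta}$ as $P_{\lambda,\zeta}$ times a sum of products of derivatives of $\tfrac{n+i\lambda}{2}u$. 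Each first-order derivative of $u$ is rational with denominators $1-|z|^2$ and $1-\langle z,\zeta\rangle$; using $|1-\langle z,\zeta\rangle|\ge 1-|z|\ge\tfrac12(1-|z|^2)$ together with $|P_{\lambda,\zeta}(z)|=\bigl((1-|z|^2)/|1-\langle z,\zeta\rangle|^2\bigr)^{n/2}\le 2^{n}\kappa_{n/2}(z)$, one obtains $|\partial^\beta P_b(z)|\lesssim v_s(b)\,\kappa_{|\beta|+n/2}(z)$, the factor $v_s(b)$ coming from the at most $|\beta|\le s$ occurrences of $\tfrac{n+i\lambda}{2}$, each of size $\lesssim 1+|\lambda|$.

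Next I would treat $\partial^{\alpha-\beta}(g\circ\varphi_w)$ by Fa\`a di Bruno again. Since $\varphi_w$ is holomorphic with $\varphi_w(z)=(Q_wz+w)/(1-\langle z,w\rangle)$, and since $|Q_wz+w|\le|1-\langle z,w\rangle|$ (because $|\varphi_w|\le 1$), the derivatives of $\varphi_w$ obey $|\partial^{\beta_i}\varphi_w(z)|\lesssim|1-\langle z,w\rangle|^{-|\beta_i|}$, while the derivative-values of $g$ satisfy $|(\partial^\delta g)(\varphi_w(z))|\le\|g\|_{(s')}\,(1-|\varphi_w(z)|^2)^{s'+n+1}$ for $|\delta|\le s'$. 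The geometric heart of the estimate is Theorem \ref{thm:moeb_basics}\,(1), which with both arguments equal to $z$ gives the identity $1-|\varphi_w(z)|^2=\tfrac{(1-|w|^2)(1-|z|^2)}{|1-\langle z,w\rangle|^2}$; combined with $|1-\langle z,w\rangle|\ge\max\{1-|z|,1-|w|\}$, this converts decay of $g$ at the translated point $\varphi_w(z)$ into decay in $z$, at the cost of controlled growth in $w$.

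Assembling these ingredients, the bound for a single Leibniz term reduces to estimating a product of powers of $1-|z|^2$, $1-|w|^2$ and $|1-\langle z,w\rangle|$, and I expect the main obstacle to be the bookkeeping here. One must distribute the available factors of $|1-\langle z,w\rangle|^{-1}$ between the two variables so that the strong decay $(1-|z|^2)^{s'+n+1}$ of $g$ absorbs all of the $z$-growth arising from the weight $\kappa_{s+n+1}$, from the factor $\kappa_{|\beta|+n/2}$ in the modulation estimate, and from the derivatives of $\varphi_w$; this forces the supremum in $z$ to be finite and leaves a weight in $w$ together with the factor $v_s(b)$. The hypothesis $s'\ge 3s+n+1$ is exactly the amount of decay that makes this allocation feasible uniformly for all $|\alpha|\le s$ and all $\beta\le\alpha$, and the delicate point is to verify that after the optimal splitting the residual $w$-exponent collapses to the claimed value, so that the weight $\kappa_{1+s}(w)$ suffices; summing the finitely many terms then yields (i).

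For (ii) I would argue by reduction to (i). Fixing $(w_0,b_0)$ and $s$, part (i) applied with $s+1$ in place of $s$ furnishes a bound on $\|\rho(w,b)\psi\|_{(s+1)}$ that is uniform for $(w,b)$ in a neighbourhood of $(w_0,b_0)$. Since $P_b$ and $\varphi_w$ depend smoothly on their parameters, each derivative $\partial^\alpha(\rho(w,b)\psi)$ converges to $\partial^\alpha(\rho(w_0,b_0)\psi)$ uniformly on compact subsets of $B$ as $(w,b)\to(w_0,b_0)$. Splitting the supremum defining $\|\cdot\|_{(s)}$ into a compact part $\{|z|\le R\}$, controlled by this local-uniform convergence, and a boundary tail $\{|z|>R\}$, controlled by the factor $1-|z|^2\le 1-R^2$ against the uniform $\|\cdot\|_{(s+1)}$ bound, then shows $\|\rho(w,b)\psi-\rho(w_0,b_0)\psi\|_{(s)}\to 0$ by letting $R\to 1$, which is the asserted continuity.
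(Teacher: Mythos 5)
Your overall route for part (i) --- differentiate the product by Leibniz, bound the modulation and chain-rule coefficients as (powers of) rational functions with denominators $1-|z|^2$, $1-\langle z,\zeta\rangle$, $1-\langle z,w\rangle$, and convert the decay of $g$ at $\varphi_w(z)$ into decay in $z$ via the identity $1-|\varphi_w(z)|^2=(1-|w|^2)(1-|z|^2)/|1-\langle z,w\rangle|^2$ from \eqref{eq:moeb_prod} together with $|1-\langle z,w\rangle|\ge\tfrac12\max\{1-|z|^2,1-|w|^2\}$ --- is the same as the paper's, and your part (ii) (locally uniform $\Vert\cdot\Vert_{(s+1)}$-bounds from (i), local uniform convergence of derivatives on $\overline{RB}$, boundary tail controlled by the spare factor $1-|z|^2\le 1-R^2$) is correct and is a mild, if anything cleaner, variant of the paper's compact-part/tail argument. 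The genuine problem sits exactly at the step you defer. Carry your allocation out: a single Leibniz term is bounded by $v_s(b)\Vert g\Vert_{(s')}$ times $(1-|z|^2)^{\,t-(s+n+1)-|\beta|-n/2}\,(1-|w|^2)^{\,t}\,|1-\langle z,w\rangle|^{-(|\alpha-\beta|+2t)}$ for any $t\le s'+n+1$; splitting the last factor between the two variables, boundedness in $z$ forces the $z$-share $a\le t-(s+n+1)-|\beta|-n/2$, and the residual $w$-growth exponent is $|\alpha-\beta|+t-a\ \ge\ |\alpha|+s+n+1+n/2$, \emph{independently of how large $s'$ (hence $t$) is taken}. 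So your scheme yields at best $\kappa_{2s+3n/2+1}(w)$, never $\kappa_{1+s}(w)$, and no strengthening of the decay hypothesis on $g$ repairs this; the assertion that the residual $w$-exponent ``collapses to the claimed value'' is where the proof fails.

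Indeed the claimed weight is not merely out of reach of the method: evaluate at $z=w$, $\zeta=w/|w|$, $\lambda=0$, where $\varphi_w(w)=o$ and $P_{0,\zeta}(w)=\bigl((1+|w|)/(1-|w|)\bigr)^{n/2}\ge\kappa_{n/2}(w)$, so that $\Vert\rho(w,b)g\Vert_{(s)}\ \ge\ \kappa_{s+n+1}(w)\,P_{0,\zeta}(w)\,|g(o)|\ \ge\ \kappa_{s+3n/2+1}(w)\,|g(o)|$ while $v_s(b)=1$; for any $g$ with $g(o)\ne 0$ and $n\ge1$ this outgrows $C_s\,\kappa_{1+s}(w)\Vert g\Vert_{(s')}$ as $|w|\to1$. (The paper's own proof elides the same transfer: moving $\kappa_{s+n+1+2|\beta|}(z)$ past $(\partial^\beta g)(\varphi_w(z))$ costs, via the substitution $u=\varphi_w(z)$ and submultiplicativity, an extra factor $\kappa_{s+n+1+2|\beta|}(w)$ that its displayed estimate drops.) The correct output of your argument is the lemma with $\kappa_{1+s}(w)$ replaced by a larger polynomial weight, e.g.\ $\kappa_{2s+3n/2+1}(w)$, and you should state and prove it in that form; this weakening is harmless for the sequel, since lemma \ref{lem:dist_continuous}, theorem \ref{thm:dist_inv} and the coorbit theory only use the existence of \emph{some} polynomial exponents $s,r$, not their precise values. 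A small additional inaccuracy: $s'\ge 3s+n+1$ is not ``exactly'' what feasibility requires --- your allocation is already feasible for $s'\ge 2s+n/2$ or so; the hypothesis is simply a comfortable sufficient margin.
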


\begin{proof}
  \textit{(i)} Since
  \begin{equation*}
    P_{\lambda,\zeta} (z) = \biggl( \frac{1 - |z|^2}{\left| 1 - \left< z, \zeta \right> \right|^2} \biggr)^{\frac{n+i\lambda}{2}} \quad \text{and} \quad \varphi_w(z) = \frac{w + Q_w z}{1 - \left< z, w \right>}
  \end{equation*}
  are powers of rational functions in $z$ and $w$, applying a differential operator $\partial^\alpha$ in $z$ and $\bar{z}$ results in a sum
  \begin{equation}
    \partial^\alpha P_{\lambda,\zeta}(z) g(\varphi_w(z))
    = \sum_{\left| \beta \right| \le \left| \alpha \right|} R_\beta (z,w,\lambda,\zeta) (\partial^\alpha g) (\varphi_w(z)),
    \label{eq:sum_mod_rat}
  \end{equation}
  where the coefficients $R_\beta$ are again rational functions, and bounded in growth by
  \begin{equation*}
    \left| R_\beta (z,w,\lambda,\zeta) \right| \in 
    \begin{cases}
      O \big( \left|1 - \left< z, \zeta \right> \right|^{-n-|\beta|} \cdot \left| 1 - \left< z, w \right> \right|^{-1-|\beta|} \big), & |z| \to 1 \\
      O \big( | \lambda |^{|\beta|} \big), & |\lambda| \to \infty \\
      O \big( \left| 1 - \left< z, w \right>  \right|^{-1-|\beta|} \big), & |w| \to 1.
    \end{cases}
  \end{equation*}
  Using the estimate
  \begin{equation*}
    \left| 1 - \left< z, w \right> \right| \ge \frac{1}{2} \max \{  1 - |z|^2, 1 - |w|^2 \},
  \end{equation*}
  we see that these coefficients are bounded by
  \begin{equation*}
    \left| R_\beta(z,w,\lambda,\zeta) \right| \le C_\beta \kappa_{n+1+2|\beta|}(z) \kappa_{1+|\beta|} (w) v_{|\beta|}(\lambda).
  \end{equation*}
  Multiplying with $\kappa_{s}$ ($s \ge |\alpha|$) and extending this estimate to the whole sum in \eqref{eq:sum_mod_rat}, we obtain
  \begin{align*}
    \left\Vert \kappa_{s}\, \partial^\alpha \rho(w,b) g \right\Vert_\infty
    &\le C \kappa_{1+|\alpha|} (w) v_{|\alpha|} (b) \sup_{|\beta| \le |\alpha|} \bigl\Vert \kappa_{n+1+2|\alpha|+s}\, \partial^\beta g \bigr\Vert_\infty \\
    & \le C \kappa_{1+s} (w) v_{s} (b) \sup_{|\beta| \le s} \bigl\Vert \kappa_{n+1+3s}\, \partial^\beta g \bigr\Vert_\infty \nonumber \\
    & \le C \kappa_{1+s} (w) v_{s} (b) \bigl\Vert g \bigr\Vert_{(3s)}.
  \end{align*}
  Since the Norms $\left\Vert \cdot \right\Vert_{(s)}$ are monotone in $s$, the result follows.

  \textit{(ii)} In $B \times \hat{B}$, pick a convergent sequence $(w_j, b_j)_{j \in \N}$ with limit $(w, b)$.
  Since $\psi \in \mathcal{S}(B)$, every $\partial^\alpha \psi$ is uniformly continuous, and
  \begin{equation*}
    \partial^\alpha (\psi \circ \varphi_{w_j}) \longrightarrow \partial^\alpha (\psi \circ \varphi_w) \quad (j \to \infty)
  \end{equation*}
    converges uniformly.
    Next, let $G : B \times \hat{B} \times B \to \C$ be continuous and, for some $s' > 0$, satisfy
    \begin{equation}
      \left| G(w,b,z) \right| \in O \!\left(\kappa_{s'}(z) \right), \quad (w,b) \in B \times \hat{B}.
      \label{eq:G_small_growth}
    \end{equation}
     Let $\varepsilon > 0$.
     Then for every $f \in \mathcal{S}(B)$ there exists $r < 1$ so that
     \begin{equation*}
       \left| G(w_j, b_j, z) \cdot f(z) \right|,\ \left| G(w, b, z) \cdot f(z) \right| < \varepsilon
     \end{equation*}
     for all $z \in B \setminus \overline{rB}$.
     Furthermore, since $\{ (w, b), \ (w_j, b_j) : j \in \N \} \times \overline{rB}$ is compact, the restriction of $G$ to this set is uniformly continuous.
     Writing $g_j(z) = G(w_j, b_j, z)$ and $g(z) = G(w, b, z)$, this implies uniform convergence $g_j \to g$ on $\overline{rB}$.
     Let $f_j, f \in \mathcal{S}(B)$ ($j \in \N$) and $f_j \to f$ uniformly.
     We obtain the estimate
     \begin{align*}
       \sup_{z \in \overline{rB}} \left| g_j(z) f_j(z) - g(z) f(z) \right|
       &\le \sup_{z \in \overline{rB}} \left| g_j(z) \right| \left| f_j(z) - f(z) \right| + \left| f(z) \right| \left| g_j(z) - g(z) \right| \\
       &\le \sup_{z \in \overline{rB}} \left| g_j(z) \right| \left\Vert f_j - f \right\Vert_\infty + \left\Vert f \right\Vert_\infty \sup_{z \in \overline{rB}} \left| g_j(z) - g(z) \right|,
     \end{align*}
     and see that both terms can be made arbitrarily small by choosing $j$ large enough.
     Consequently, $\left\Vert G(w_j,b_j, \cdot) f_j - G(w, b, \cdot) f \right\Vert_\infty \to 0$ for $j \to \infty$.
     Since $\kappa_s \partial^\alpha \rho(w,b) \psi$ is always a finite sum of terms $G(w,b, \cdot) f$ of this form (for some $f \in \mathcal{S}(B)$, and with $G$ satisfying \eqref{eq:G_small_growth}), we have proved that $(w_j, b_j) \to (w, b)$ implies
     \begin{equation*}
       \left\Vert \rho(w_j,b_j) \psi - \rho(w, b) \psi \right\Vert_{(s)} \to 0
     \end{equation*}
     for arbitrary $s > 0$.
\end{proof}

If $\psi \in \mathcal{S}_\natural(B)$, the voice transform therefore extends to $\mathcal{S}'(B)$ by setting
\begin{equation*}
  V_\psi f(w,b) = \bigl< f, \rho(w,b) \psi \bigr>,
\end{equation*}
the term on the right denoting dual pairing.
Some statements carry over from the Euclidean STFT to the present case, resulting in the inversion formula on $\mathcal{S}'(B)$.

\begin{lem}
  \label{lem:dist_continuous}
  Let $\psi \in \mathcal{S}_\natural(B)$.
  If $f \in \mathcal{S}'(B)$, then $V_\psi f$ is continuous on $B \times \hat{B}$ and there exist constants $C, s, r > 0$ such that
  \begin{equation*}
    \left| V_\psi f(w,b) \right| \le C \kappa_s(w) v_r(b)
  \end{equation*}
  holds for all $w \in B,\, b \in \hat{B}$.
\end{lem}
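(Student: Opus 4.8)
The plan is to read off the result directly from the two parts of Lemma \ref{lem:schwartz_inv} together with the defining boundedness property of a tempered distribution. The first thing I would record is that, since the seminorms $\left\Vert \cdot \right\Vert_{(s)}$ are monotone in $s$, they make $\mathcal{S}(B)$ into a metrizable locally convex space; consequently every $f \in \mathcal{S}'(B)$ is continuous with respect to a single seminorm, i.e.\ there exist $C_0 > 0$ and an index $s_0 \ge 0$ with
\begin{equation*}
  \left| \left< f, g \right> \right| \le C_0 \left\Vert g \right\Vert_{(s_0)} \qquad \text{for all } g \in \mathcal{S}(B).
\end{equation*}
This is the only structural fact about $f$ that enters the argument.

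For the pointwise bound I would apply this inequality to $g = \rho(w,b)\psi$, which lies in $\mathcal{S}(B)$ by Lemma \ref{lem:schwartz_inv}(i), and then insert the operator norm estimate from the same lemma. Choosing $s' \ge 3 s_0 + n + 1$ this yields
\begin{align*}
  \left| V_\psi f(w,b) \right|
  &= \left| \left< f, \rho(w,b)\psi \right> \right|
    \le C_0 \left\Vert \rho(w,b)\psi \right\Vert_{(s_0)} \\
  &\le C_0\, C_{s_0}\, \kappa_{1+s_0}(w)\, v_{s_0}(b)\, \left\Vert \psi \right\Vert_{(s')}.
\end{align*}
Since $\left\Vert \psi \right\Vert_{(s')}$ is a finite constant depending only on $\psi$, this is exactly the asserted estimate, with $s = 1 + s_0$, $r = s_0$, and $C = C_0\, C_{s_0}\, \left\Vert \psi \right\Vert_{(s')}$.

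For continuity I would simply compose. Lemma \ref{lem:schwartz_inv}(ii) gives that $(w,b) \mapsto \rho(w,b)\psi$ is continuous as a map $B \times \hat{B} \to \mathcal{S}(B)$, and $f$ is by hypothesis continuous on $\mathcal{S}(B)$; hence $(w,b) \mapsto \left< f, \rho(w,b)\psi \right> = V_\psi f(w,b)$ is continuous as a composition of continuous maps.

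The whole statement is thus a packaging of Lemma \ref{lem:schwartz_inv}, so there is no genuinely hard analytic step remaining. The only point requiring a little care — and the one I would be sure to state explicitly rather than assume silently — is the reduction to a single seminorm, which rests on the metrizability of $\mathcal{S}(B)$: it is precisely the finiteness of one index $s_0$ that turns the abstract continuity of $f$ into the concrete polynomial weights $\kappa_{1+s_0}$ and $v_{s_0}$.
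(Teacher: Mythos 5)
Your proposal is correct and follows essentially the same route as the paper: bound $\left| \left< f, \rho(w,b)\psi \right> \right|$ by a single seminorm $\left\Vert \rho(w,b)\psi \right\Vert_{(s_0)}$ using continuity of $f$, invoke Lemma \ref{lem:schwartz_inv}(i) for the weight estimate, and obtain continuity of $V_\psi f$ by composing with Lemma \ref{lem:schwartz_inv}(ii); the only (harmless) quibble is that the reduction to one seminorm rests on the monotonicity (directedness) of the family $\left\Vert \cdot \right\Vert_{(s)}$ rather than on metrizability as such, a point you in fact state. No gap remains.
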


\begin{proof}
  Since $\rho(w,b)\psi \in \mathcal{S}(B)$, continuity of $f \in \mathcal{S}'(B)$ implies
  \begin{equation*}
    \left| V_\psi f (w,b) \right| = \left| \left< f, \rho(w,b) \psi \right> \right| \le C_1 \left\Vert \rho(w,b) \psi \right\Vert_{(s_1)}
  \end{equation*}
for suitable $C_1, s_1 > 0$.
By lemma \ref{lem:schwartz_inv} \textit{(i)}, we may take $s, r$ sufficiently large such that the last norm is bounded up to a constant by $\kappa_s(w) v_r(b) \left\Vert \psi \right\Vert_{(s)}$.
Finally, continuity of $f$ and lemma \ref{lem:schwartz_inv} \textit{(ii)} also imply pointwise continuity of $V_\psi f$.
\end{proof}

\begin{lem}
  \label{lem:F_int_schwartz}
  Let $\psi \in \mathcal{S}_\natural(B)$ and let $F$ be measurable on $B \times \hat{B}$, satisfying
  \begin{equation}
    \left| F(w,b) \right| \le C_{s,r}\, \kappa_{-s}(w)\, v_{-r} (b)
    \label{eq:F_fast}
  \end{equation}
  for all $s, r > 0$, with $C_{s,r}$ independent of $(w,b)$.
  Then the integral
  \begin{equation*}
    z \mapsto f(z) = \iint_{B \times \hat{B}} F(w,b) \rho(w,b) \psi(z)\, \dd\mu(w) \dd\nu(b)
  \end{equation*}
  defines a function in $\mathcal{S}(B)$.
\end{lem}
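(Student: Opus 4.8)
The plan is to construct $f$ pointwise, establish its smoothness by differentiating under the integral sign, and then bound each Schwartz seminorm $\left\Vert f \right\Vert_{(s)}$ directly, using the operator estimate of lemma \ref{lem:schwartz_inv} \textit{(i)} against the rapid decay \eqref{eq:F_fast} of $F$. The case $\alpha = 0$ of the seminorm bound already gives well-definedness of the defining integral, so I would fold everything into one estimate.

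First I would record the pointwise estimate on the integrand. For $|\alpha| \le s$ and any $z \in B$, the definition of the seminorm gives $\kappa_s(z) |\partial^\alpha[\rho(w,b)\psi](z)| \le \left\Vert \rho(w,b)\psi \right\Vert_{(s)}$, hence $|\partial^\alpha[\rho(w,b)\psi](z)| \le \kappa_{-s}(z) \left\Vert \rho(w,b)\psi \right\Vert_{(s)}$, and lemma \ref{lem:schwartz_inv} \textit{(i)} bounds the last factor by $C_s \kappa_{1+s}(w) v_s(b) \left\Vert \psi \right\Vert_{(s')}$ for $s' \ge 3s+n+1$. Since $\kappa_{-s}(z) = (1-|z|^2)^s \le 1$, combining this with \eqref{eq:F_fast} yields, for every $m, p > 0$,
\begin{equation*}
  \bigl| F(w,b)\, \partial^\alpha[\rho(w,b)\psi](z) \bigr| \le C\, \kappa_{1+s-m}(w)\, v_{s-p}(b),
\end{equation*}
a bound that is independent of $z$.

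Next I would check that the right-hand side is integrable against $\dd\mu\,\dd\nu$ once $m, p$ are chosen large. Against $\mu$ one has $\int_B \kappa_{1+s-m}(w)\,\dd\mu(w) = \int_B (1-|w|^2)^{m-s-n-2}\,\dd l(w)$, which converges as soon as $m-s-n-2 > -1$, that is $m > s+n+1$. Against $\nu$ the asymptotics \eqref{eq:hc_poly} give $\left| \hc(\lambda) \right|^{-2} \lesssim v_{2n-1}(\lambda)$, so using $\sigma(S) = 1$ we get $\int_{\hat B} v_{s-p}(b)\,\dd\nu(b) \lesssim \int_\R (1+|\lambda|^2)^{(s-p+2n-1)/2}\,\dd\lambda$, which is finite once $p > s+2n$. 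Fixing such $m$ and $p$ produces a single $\dd\mu\,\dd\nu$-integrable dominating function that is uniform in $z$ (and in $\alpha$ with $|\alpha| \le s$); the standard theorem on differentiation under the integral sign then shows that $f$ is well defined, smooth, and that $\partial^\alpha f(z) = \iint F(w,b)\, \partial^\alpha[\rho(w,b)\psi](z)\,\dd\mu(w)\,\dd\nu(b)$.

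Finally I would estimate the seminorms. For $|\alpha| \le s$, bringing $\kappa_s(z)$ inside the integral and using $\kappa_s(z)|\partial^\alpha[\rho(w,b)\psi](z)| \le \left\Vert \rho(w,b)\psi \right\Vert_{(s)}$ gives
\begin{equation*}
  \left\Vert f \right\Vert_{(s)} \le \iint_{B \times \hat B} \left| F(w,b) \right|\, \left\Vert \rho(w,b)\psi \right\Vert_{(s)}\,\dd\mu(w)\,\dd\nu(b),
\end{equation*}
and the integrability just established (with the same $m, p$) bounds the right-hand side by a finite constant. As this holds for every $s > 0$, we conclude $f \in \mathcal{S}(B)$. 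The only genuine obstacle is the bookkeeping in the middle step: one must verify that the super-polynomial decay of $F$ simultaneously defeats the growth $\kappa_{1+s}(w) v_s(b)$ coming from lemma \ref{lem:schwartz_inv} \textit{(i)}, the boundary blow-up of $\dd\mu$, and the polynomial growth of the Plancherel density $\left| \hc(\lambda) \right|^{-2}$ recorded in \eqref{eq:hc_poly}. Since \eqref{eq:F_fast} is assumed for all $s, r$, there is always enough room to absorb all three, and the argument closes.
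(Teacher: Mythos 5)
Your proposal is correct and follows essentially the same route as the paper: bound the integrand via lemma \ref{lem:schwartz_inv} \textit{(i)}, use the arbitrary-order decay \eqref{eq:F_fast} to get absolute convergence, interchange $\partial^\alpha$ with the integral, and estimate each seminorm $\left\Vert \kappa_s\, \partial^\alpha f \right\Vert_\infty$ by $\iint |F(w,b)|\, \kappa_{s'}(w)\, v_{r'}(b)\, \dd\mu(w)\, \dd\nu(b)$. You additionally make explicit the integrability bookkeeping (the thresholds $m > s+n+1$ against $\dd\mu$ and $p > s+2n$ against $\dd\nu$ via \eqref{eq:hc_poly}) that the paper leaves implicit behind its terse closing citation, which is a welcome clarification but not a different argument.
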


\begin{proof}
  Since \eqref{eq:F_fast} is valid for all $s,r$, the integral converges absolutely, so it can be interchanged with any $\partial^\alpha$, yielding the estimate
  \begin{align}
    \left\Vert \kappa_s \partial^\alpha f \right\Vert_{\infty}
    &\le \iint \left| F(w,b) \right| \left\Vert \kappa_s \partial^\alpha \rho(w,b) \psi \right\Vert_{\infty} \dd\mu(w) \dd\nu(b) \nonumber \\
    &\le C \iint \left| F(w,b) \right| \kappa_{s'}(w) v_{r'}(b) \, \dd\mu(w) \dd\nu(b),
    \label{eq:schwartz_est_style}
  \end{align}
  This last term is finite, again by lemma \ref{lem:schwartz_inv}
\end{proof}

\begin{lem}
  \label{lem:V_schwartz}
  Let $\psi \in \mathcal{S}_\natural(B)$ and $f \in \mathcal{S}'(B)$.
  Then the following are equivalent:
  \begin{enumerate}[(i)]
    \item $f \in \mathcal{S}(B)$.
    \item For all $s, r > 0$ there exists $C_{s,r} > 0$ so that
      \begin{equation*}
	\left| V_\psi f (w, b) \right| \le C_{s,r} \, \kappa_{-s}(w) v_{-r}(b),
        \qquad \text{$w \in B$, $b \in \hat{B}$.}
      \end{equation*}
  \end{enumerate}
\end{lem}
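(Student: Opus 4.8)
The plan is to prove the two implications separately, reserving most of the work for (i)$\Rightarrow$(ii) and assembling (ii)$\Rightarrow$(i) from the results already at hand.

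For (i)$\Rightarrow$(ii) I would start from the observation used in the proof of Theorem \ref{thm:orth_rel}, namely that $V_\psi f(w,\cdot)$ is the Helgason transform of $g_w := f\cdot\overline{\psi\circ\varphi_w}$, which lies in $\mathcal{S}(B)$ whenever $f,\psi\in\mathcal{S}(B)$. The decay in the frequency variable $b=(\lambda,\zeta)$ would then come from the self-adjointness of $\LB$ together with the eigenvalue relation $\LB P_{-\lambda,\zeta}=-(n^2+\lambda^2)P_{-\lambda,\zeta}$ of Theorem \ref{thm:sph_func_unique}: integrating by parts $N$ times moves the operator onto $g_w$, so that
\begin{equation*}
 (n^2+\lambda^2)^N\,V_\psi f(w,b)=(-1)^N\int_B(\LB^N g_w)(z)\,P_{-\lambda,\zeta}(z)\,\dd\mu(z),
\end{equation*}
and since $|P_{-\lambda,\zeta}(z)|=\bigl((1-|z|^2)/|1-\langle z,\zeta\rangle|^2\bigr)^{n/2}\le C\,\kappa_{n/2}(z)$ uniformly in $\lambda,\zeta$, this yields a bound of order $v_{-r}(b)$ for every $r$, with a constant governed by $\int_B|\LB^N g_w|\,\kappa_{n/2}\,\dd\mu$. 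The decay in $w$ would be extracted from the Möbius product identity \eqref{eq:moeb_prod}, which gives $1-|\varphi_w(z)|^2=(1-|w|^2)(1-|z|^2)/|1-\langle z,w\rangle|^2$; feeding this into the Schwartz bound $|\partial^\beta\psi(\varphi_w(z))|\le C\,\kappa_{-t}(\varphi_w(z))$ produces a factor $(1-|w|^2)^t=\kappa_{-t}(w)$, i.e. arbitrary decay in $w$, while the rational derivatives of $\varphi_w$ contribute at most the growth $\kappa_{1+|\beta|}(w)$ already estimated in Lemma \ref{lem:schwartz_inv}.

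Combining these, $\LB^N g_w$ is a finite sum of terms $\partial^\alpha f\cdot\partial^\beta(\overline{\psi\circ\varphi_w})$ with rational coefficients, each bounded (via the elementary estimate $|1-\langle z,w\rangle|\ge\tfrac12\max\{1-|z|^2,1-|w|^2\}$) by a product $\kappa_{-s}(w)\,\kappa_{-m}(z)$ in which the power $m$ can be made as large as needed because $f\in\mathcal{S}(B)$. The main obstacle is exactly this bookkeeping: one must choose $t$ (the order of decay demanded of $\psi$) and the $\mathcal{S}$-seminorm index of $f$ large enough, relative to $N$ and $|\beta|$, that the two decays survive simultaneously and the $z$-integral against $\kappa_{n/2}\,\dd\mu$ still converges. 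After that, $|V_\psi f(w,b)|\le C_{s,r}\,\kappa_{-s}(w)\,v_{-r}(b)$ follows for all prescribed $s,r$.

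For (ii)$\Rightarrow$(i), pick $\gamma=\psi$ (legitimate since $\psi\ne 0$ gives $\langle\psi,\psi\rangle\ne0$) and set
\begin{equation*}
 \tilde f=\|\psi\|^{-2}\iint_{B\times\hat B}V_\psi f(w,b)\,\rho(w,b)\psi\,\dd\mu(w)\,\dd\nu(b).
\end{equation*}
By hypothesis $V_\psi f$ satisfies \eqref{eq:F_fast}, so Lemma \ref{lem:F_int_schwartz} gives $\tilde f\in\mathcal{S}(B)$. It then remains to identify $\tilde f$ with $f$ in $\mathcal{S}'(B)$, which I would do weakly: for $h\in\mathcal{S}(B)$, using $\langle\rho(w,b)\psi,h\rangle=\overline{V_\psi h(w,b)}$,
\begin{equation*}
 \langle\tilde f,h\rangle=\|\psi\|^{-2}\iint V_\psi f(w,b)\,\overline{V_\psi h(w,b)}\,\dd\mu\,\dd\nu=\|\psi\|^{-2}\Bigl\langle f,\iint V_\psi h(w,b)\,\rho(w,b)\psi\,\dd\mu\,\dd\nu\Bigr\rangle,
\end{equation*}
the interchange of the distributional pairing with the $\mathcal{S}(B)$-valued integral being justified by the continuity of $(w,b)\mapsto\rho(w,b)\psi$ from Lemma \ref{lem:schwartz_inv}(ii) and the convergence of the integral in $\mathcal{S}(B)$. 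Since $h\in\mathcal{S}(B)$, the already-proved implication (i)$\Rightarrow$(ii) makes $V_\psi h$ rapidly decaying, and the $L^2$-inversion formula of the preceding theorem evaluates the inner integral as $\|\psi\|^2 h$; hence $\langle\tilde f,h\rangle=\langle f,h\rangle$ for all $h$, giving $\tilde f=f\in\mathcal{S}(B)$. Thus this direction is essentially an assembly of Lemma \ref{lem:F_int_schwartz}, the orthogonality relations, and the inversion formula, with (i)$\Rightarrow$(ii) applied to the test function.
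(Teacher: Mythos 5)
Your proposal is correct, and it splits from the paper in exactly one place. The direction (ii)$\Rightarrow$(i) is the paper's own argument: apply lemma \ref{lem:F_int_schwartz} to $F = V_\psi f$ to get $\tilde f \in \mathcal{S}(B)$, then identify $\tilde f$ with $f$. The paper compresses the identification into the phrase ``the inversion formula on $\L^2(B,\mu)$ yields $\tilde f = f$,'' which read literally cannot be applied to $f$ (a priori only a distribution); the weak pairing against $h \in \mathcal{S}(B)$ that you write out, applying the $\L^2$ inversion to the test function $h$ and using the already-proved implication (i)$\Rightarrow$(ii) to make $V_\psi h$ rapidly decaying, is precisely the argument the paper's one-liner glosses over, so your version is the more complete one. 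In the direction (i)$\Rightarrow$(ii) you genuinely diverge: the paper fixes one variable at a time, observing that $w \mapsto V_\psi f(w,b)$ is a convolution in the sense of \eqref{eq:convolution_B} of two functions in $\mathcal{S}(B)$ (giving $\kappa_s$-boundedness in $w$ for fixed $b$), while $b \mapsto V_\psi f(w,b)$ is the Helgason transform of a smooth function and so decays fast in $\lambda$ for fixed $w$; the uniformity of the constants in the frozen variable, which statement (ii) actually demands, is left implicit. You instead run a single quantitative estimate: the same $\LB$-mechanism for the $\lambda$-decay (self-adjointness plus the eigenvalue relation, iterated $N$ times), but the $w$-decay extracted directly from the identity $1 - \left|\varphi_w(z)\right|^2 = (1-|w|^2)(1-|z|^2)/\left|1-\langle z,w\rangle\right|^2$ of \eqref{eq:moeb_prod} fed into the Schwartz decay of $\psi$, with the rational coefficients controlled as in lemma \ref{lem:schwartz_inv}. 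What this buys is a bound $C_{s,r}\,\kappa_{-s}(w)\,v_{-r}(b)$ with $C_{s,r}$ visibly independent of both variables, at the cost of the bookkeeping you candidly flag; that bookkeeping does close as you describe, since $\left|P_{-\lambda,\zeta}(z)\right| \le 2^n \kappa_{n/2}(z)$ uniformly in $(\lambda,\zeta)$ via $\left|1-\langle z,w\rangle\right| \ge \frac{1}{2}\max\{1-|z|^2, 1-|w|^2\}$, the invariant measure costs only the fixed power $\kappa_{n+1}$, and the arbitrarily fast decay of $f$ absorbs every $z$-growth produced along the way. The paper's route is shorter and structurally cleaner, but it relies on the unproved assertion that the convolution of two $\mathcal{S}(B)$ functions again has $\kappa_s$-decay for all $s$; your direct estimate avoids that assertion and delivers the joint bound in one pass.
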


\begin{proof}
  Let $f \in \mathcal{S}(B)$ and let $b \in \hat{B}$ be fixed.
  Then $w \mapsto V_\psi f (w,b)$ is the convolution (cf. \eqref{eq:convolution_B}) of two functions in $\mathcal{S}(B)$, so $\kappa_s V_\psi f (\cdot, b)$ is bounded for arbitrary $s > 0$.
  On the other hand, $b \mapsto V_\psi f (w,b)$ is the Helgason transform of a smooth function, and possesses fast decay.
  Hence $v_r V_\psi f(w, \cdot)$ is bounded for every $r > 0$.

  The other implication follows from lemma \ref{lem:F_int_schwartz}; since the integral
  \begin{equation*}
    \tilde{f} = \left< \psi, \psi \right>^{-1} \iint V_\psi f (w,b) \rho(w,b) \psi \, \dd\mu(w) \dd\nu(b)
  \end{equation*}
  defines a function in $\mathcal{S}(B)$, the inversion formula on $\L^2(B,\mu)$ yields $\tilde{f} = f$.
\end{proof}

\begin{lem}
  \label{lem:V_equiv_norms}
  Let $\psi \in \mathcal{S}_\natural(B)$.
  Then the seminorms $\left\Vert \cdot \right\Vert_{s,r}$ defined by
  \begin{equation*}
    \left\Vert f \right\Vert_{s,r} = \sup_{(w,b) \in B \times \hat{B}} \kappa_s(w) v_r(b) \left| V_\psi f (w, b) \right|
  \end{equation*}
  generate the same topology on $\mathcal{S}(B)$ as $\left\Vert \cdot \right\Vert_{(s)}$.
\end{lem}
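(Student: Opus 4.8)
The plan is to show that the identity map on $\mathcal{S}(B)$ is continuous in both directions between the two families of seminorms. Since both families are directed and increase in their parameters (up to multiplicative constants), it suffices to bound, for each fixed choice of parameters, a single seminorm of one family by a single seminorm of the other; concretely, I would prove (a)~$\|f\|_{(s)} \le C\,\|f\|_{s',r'}$ for suitable $s',r'$, and (b)~$\|f\|_{s,r} \le C\,\|f\|_{(s')}$ for suitable $s'$.

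For direction (a) I would start from the inversion formula on $\mathcal{S}(B)$ furnished by Lemma~\ref{lem:V_schwartz}, that is $\langle\psi,\psi\rangle f = \iint_{B\times\hat B} V_\psi f(w,b)\,\rho(w,b)\psi\,\dd\mu(w)\,\dd\nu(b)$, multiply by $\kappa_s$, apply $\partial^\alpha$, and interchange derivative and integral. This interchange is legitimate because for $f\in\mathcal{S}(B)$ the transform $V_\psi f$ has the rapid decay established in Lemma~\ref{lem:V_schwartz}, so Lemma~\ref{lem:F_int_schwartz} applies and the differentiated integral converges absolutely. Estimating the integrand through Lemma~\ref{lem:schwartz_inv}\,(i)—so that $\kappa_s\partial^\alpha\rho(w,b)\psi$ is controlled in sup-norm by $\|\rho(w,b)\psi\|_{(s)} \le C\,\kappa_{1+s}(w)\,v_s(b)\,\|\psi\|_{(s')}$—together with $|V_\psi f(w,b)| \le \kappa_{-s'}(w)\,v_{-r'}(b)\,\|f\|_{s',r'}$ straight from the definition, reduces the whole bound to the convergence of $\iint \kappa_{1+s-s'}(w)\,v_{s-r'}(b)\,\dd\mu(w)\,\dd\nu(b)$. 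The $w$-part $\kappa_{1+s-s'}\,\dd\mu$ behaves like $(1-|w|^2)^{s'-s-n-2}\,\dd l$ and is integrable over $B$ once $s'>s+n+1$, while by \eqref{eq:hc_poly} the measure $\dd\nu$ grows like $v_{2n-1}(\lambda)\,\dd\lambda$, so $\int_{\hat B} v_{s-r'}\,\dd\nu<\infty$ for $r'>s+2n$. Fixing such $s'$ and $r'$ gives (a), up to the harmless exponent shifts by $n+1$ built into the definition of $\|\cdot\|_{(s)}$.

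For direction (b), the inequality is the quantitative form of the implication (i)$\Rightarrow$(ii) of Lemma~\ref{lem:V_schwartz}, and the task is to exhibit the constant as a continuous function of $f$. Writing $V_\psi f(w,\cdot) = \widehat{g_w}$ with $g_w = f\cdot\overline{\psi\circ\varphi_w}$, the self-adjointness of $\LB$ and $\LB P_{-\lambda,\zeta} = -(n^2+\lambda^2)P_{-\lambda,\zeta}$ give $(n^2+\lambda^2)^k\,\widehat{g_w} = (-1)^k\,\widehat{\LB^k g_w}$, so that for $2k\ge r$ one has $v_r(b)\,|V_\psi f(w,b)| \lesssim \int_B |\LB^k g_w(z)|\,|P_{-\lambda,\zeta}(z)|\,\dd\mu(z)$; as $|P_{-\lambda,\zeta}|$ is independent of $\lambda$, this is a $\lambda$-uniform weighted $\L^1$-norm and supplies the decay in $b$. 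The decay $\kappa_s(w)$ in $w$ comes from the rapid decay of the radial window $\psi$ through the convolution structure (cf.\ \eqref{eq:convolution_B}), which concentrates $\overline{\psi\circ\varphi_w}$ near $w$. Since multiplication by the window, the iterated $\LB$, the Helgason transform, and the weighted $\L^1$-bound are each linear and continuous on $\mathcal{S}(B)$, the combined estimate is $\kappa_s(w)\,v_r(b)\,|V_\psi f(w,b)| \le C\,\|f\|_{(s')}$ for $s'$ large enough, which is (b).

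The main obstacle is the weight bookkeeping in (a): one must confirm that the polynomial growth of $|\hc(\lambda)|^{-2}$ from Proposition~\ref{thm:hc_poly} is dominated by the chosen factor $v_{-r'}$ and, simultaneously, that the boundary singularity of $\dd\mu$ is absorbed by $\kappa_{-s'}$. The freedom to enlarge $s'$ and $r'$ makes both possible, but only the explicit exponent count above guarantees it. Direction (b) carries no such integrability issue—finiteness of $\|f\|_{s,r}$ is already contained in Lemma~\ref{lem:V_schwartz}—and the only point demanding attention there is the continuous dependence on $f$, which is automatic from the linearity of each individual step.
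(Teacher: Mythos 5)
Your proposal is correct, but it takes a partly different route from the paper. Your direction (a) is essentially the paper's argument: the paper also starts from the inversion formula of Lemma~\ref{lem:V_schwartz}, estimates as in \eqref{eq:schwartz_est_style} via Lemma~\ref{lem:schwartz_inv}\,(i), and absorbs the boundary singularity of $\dd\mu$ and the growth $\left|\hc(\lambda)\right|^{-2}\sim v_{2n-1}(\lambda)$ of $\dd\nu$ into enlarged indices, arriving at $\left\Vert \kappa_s\partial^\alpha f\right\Vert_\infty \le C_3 \left\Vert f\right\Vert_{s'+n+1,\,r'+2n+1}$; your exponent count ($s'>s+n+1$, $r'>s+2n$) matches this. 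The divergence is in direction (b): the paper does \emph{not} prove it by estimate at all, but introduces $\tilde{\mathcal{S}}(B)=\{f : \left\Vert f\right\Vert_{s,r}<\infty \text{ for all } s,r\}$, identifies it with $\mathcal{S}(B)$ via Lemma~\ref{lem:V_schwartz}, and invokes the open mapping theorem to conclude that the continuous identity $\tilde{\mathcal{S}}(B)\to\mathcal{S}(B)$ has continuous inverse. You instead make the implication (i)$\Rightarrow$(ii) of Lemma~\ref{lem:V_schwartz} quantitative: the eigenvalue relation $(n^2+\lambda^2)^k\widehat{g_w}=(-1)^k\widehat{\LB^k g_w}$ (justified by self-adjointness of $\LB$ and the fast decay of $g_w=f\cdot\overline{\psi\circ\varphi_w}$, consistent with the paper's remark on smoothness versus decay) supplies the $v_r$ factor uniformly in $w$, and the convolution structure \eqref{eq:convolution_B} supplies the $\kappa_s$ factor. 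This is more work but buys something real: it yields explicit constants and, notably, it sidesteps the Fréchet-space hypotheses implicit in the paper's appeal to the open mapping theorem — in particular the completeness of $\tilde{\mathcal{S}}(B)$, which the paper uses without verification. Two small points of care in your write-up: ``continuity is automatic from linearity'' is not a valid inference in itself — what saves you is that each step of your chain is bounded by a seminorm $\left\Vert\cdot\right\Vert_{(s')}$, and you should say so; and your joint bound in $(w,b)$ genuinely needs the $\lambda$-uniformity of $\left|P_{-\lambda,\zeta}\right|$ (true for real $\lambda$) combined with $w$-decay of $\int_B \bigl|\LB^k g_w\bigr|\,\bigl|P_{-\lambda,\zeta}\bigr|\,\dd\mu$, since the separate one-variable decays (as stated in the paper's proof of Lemma~\ref{lem:V_schwartz}) do not by themselves give the supremum over $B\times\hat{B}$.
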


\begin{proof}
  Set $\tilde{\mathcal{S}}(B) = \{ f \in \L^2(B,\mu) : \left\Vert f \right\Vert_{s,r} < \infty \text{ for all } s, r > 0 \}$.
  Then, by lemma \ref{lem:V_schwartz}, $f \in \mathcal{S}(B)$ if and only if $f \in \tilde{\mathcal{S}}(B)$.
  A similar estimate as in \eqref{eq:schwartz_est_style} yields
  \begin{align*}
    \left\Vert \kappa_s \partial^\alpha f \right\Vert_{\infty}
    &\le C_1 \iint \left| V_\psi f (w,b) \right| \kappa_{s'} (w) v_{r'} (b) \, \dd\mu(w) \dd\nu(b) \\
    &\le C_2 \left\Vert f \right\Vert_{s'+n+1,\, r'+2n+1} \int_B \kappa_{-n-1}\, \dd\mu \int_{\hat{B}} v_{-2n-1}\, \dd\nu \\
    &\le C_3 \left\Vert f \right\Vert_{s'+n+1,\, r'+2n+1},
  \end{align*}
  for $s', r'$ large enough.
  This shows that the identity $\id : \tilde{\mathcal{S}}(B) \to \mathcal{S}(B)$ is continuous.
  By the open mapping theorem, this is also true for its inverse.
\end{proof}

After these preparations we are ready to state the inversion Formula on $\mathcal{S}'(B)$.

\begin{thm}
  \label{thm:dist_inv}
  Let $\gamma, \psi \in \mathcal{S}_\natural(B)$ and $\left< \gamma, \psi \right> \ne 0$.
  \begin{enumerate}[(i)]
    \item Suppose there exist constants $s,r, C_{s,r} > 0$ so that for all $w \in B$, $b \in \hat{B}$
      \begin{equation}
        \left| F(w,b) \right| \le C_{s,r}\, \kappa_s(w) v_r (b).
        \label{eq:F_growth}
      \end{equation}
      Then the integral $f = \iint F(w,b) \rho(w,b) \gamma \, \dd\mu(w) \dd\nu(b)$ defines an element in $\mathcal{S}'(B)$ via
      \begin{equation}
        \left< f, g \right> = \iint F(w,b) \left< \rho(w,b) \gamma, g \right> \dd\mu(w) \dd\nu(b).
        \label{eq:V_inverse_dist}
      \end{equation}
    \item In particular, for every $f \in \mathcal{S}'(B)$ we have the inversion formula
      \begin{equation}
        f = \left< \gamma, \psi \right>^{-1} \iint V_\psi f(w, b) \rho(w, b) \gamma \, \dd\mu(w) \dd\nu(b).
        \label{eq:V_inverse}
      \end{equation}
  \end{enumerate}
\end{thm}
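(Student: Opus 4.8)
The plan is to establish the two parts in sequence: part (i) secures the synthesis integral as a tempered distribution, and part (ii) identifies it with $f$ by a Fubini-type interchange that reduces everything to the already-proven $\L^2$ inversion. For part (i), I would define $g\mapsto\langle f,g\rangle$ on $\mathcal{S}(B)$ through the right-hand side of \eqref{eq:V_inverse_dist} and check that it is well defined and continuous. Since $\langle\rho(w,b)\gamma,g\rangle=\overline{V_\gamma g(w,b)}$ for $g\in\mathcal{S}(B)$, the growth hypothesis \eqref{eq:F_growth} bounds the integrand by $C_{s,r}\,\kappa_s(w)v_r(b)\,|V_\gamma g(w,b)|$; because $g\in\mathcal{S}(B)$, Lemma \ref{lem:V_schwartz} supplies the rapid decay $|V_\gamma g(w,b)|\le C'\kappa_{-s-n-1}(w)v_{-r-2n-1}(b)$, so the integrand is dominated by a constant multiple of $\kappa_{-n-1}(w)v_{-2n-1}(b)$, which is integrable against $\dd\mu\,\dd\nu$ exactly as in the proof of Lemma \ref{lem:V_equiv_norms}. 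This yields absolute convergence and simultaneously the estimate $|\langle f,g\rangle|\le C''\sup_{(w,b)}\kappa_{s+n+1}(w)v_{r+2n+1}(b)|V_\gamma g(w,b)|$; since Lemma \ref{lem:V_equiv_norms} applies equally to the window $\gamma\in\mathcal{S}_\natural(B)$, the seminorm on the right is continuous on $\mathcal{S}(B)$, so $f\in\mathcal{S}'(B)$.

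For part (ii), I would first note that Lemma \ref{lem:dist_continuous} makes $F=\langle\gamma,\psi\rangle^{-1}V_\psi f$ satisfy \eqref{eq:F_growth}, so part (i) applies and $\tilde f=\langle\gamma,\psi\rangle^{-1}\iint V_\psi f(w,b)\,\rho(w,b)\gamma\,\dd\mu(w)\dd\nu(b)$ lies in $\mathcal{S}'(B)$. Testing against $g\in\mathcal{S}(B)$ and using $\langle\rho(w,b)\gamma,g\rangle=\overline{V_\gamma g(w,b)}$ together with the conjugate-linearity of the pairing in its second argument, I would recast the integrand as $\langle f,\,V_\gamma g(w,b)\,\rho(w,b)\psi\rangle$, obtaining $\langle\tilde f,g\rangle=\langle\gamma,\psi\rangle^{-1}\iint\langle f,\,V_\gamma g(w,b)\,\rho(w,b)\psi\rangle\,\dd\mu(w)\dd\nu(b)$. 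The decisive step is to pull $f$ outside the integral: as $g\in\mathcal{S}(B)$ forces $V_\gamma g$ to decay rapidly, Lemma \ref{lem:F_int_schwartz} shows that $H=\iint V_\gamma g(w,b)\,\rho(w,b)\psi\,\dd\mu(w)\dd\nu(b)$ converges in the Fréchet topology of $\mathcal{S}(B)$, whence the continuity of $f$ lets the dual pairing commute with this vector-valued integral, giving $\langle\tilde f,g\rangle=\langle\gamma,\psi\rangle^{-1}\langle f,H\rangle$.

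It then remains to identify $H$, which I would do by applying the inversion formula on $\L^2(B,\mu)$ to $g$ with analysis window $\gamma$ and synthesis window $\psi$: this yields $g=\langle\psi,\gamma\rangle^{-1}H$, so $H=\langle\psi,\gamma\rangle g$. Substituting and using $\overline{\langle\psi,\gamma\rangle}=\langle\gamma,\psi\rangle$ gives $\langle\tilde f,g\rangle=\langle\gamma,\psi\rangle^{-1}\overline{\langle\psi,\gamma\rangle}\,\langle f,g\rangle=\langle f,g\rangle$ for every $g\in\mathcal{S}(B)$, so $\tilde f=f$. I expect the interchange of the distributional pairing with the vector-valued synthesis integral to be the main obstacle: it is precisely here that the convergence of $H$ in the topology of $\mathcal{S}(B)$ (Lemma \ref{lem:F_int_schwartz}) must be coupled with the continuity of $f$, going beyond the scalar absolute convergence that sufficed in part (i).
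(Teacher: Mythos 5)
Your proposal is correct and follows essentially the same route as the paper's own proof: part (i) via absolute convergence of the pairing integral (fast decay of $V_\gamma g$ from Lemma \ref{lem:V_schwartz}) and continuity in the equivalent seminorms of Lemma \ref{lem:V_equiv_norms}, and part (ii) by recasting the integrand as $\langle f, V_\gamma g(w,b)\,\rho(w,b)\psi\rangle$ and invoking the $\L^2$ inversion formula for $g$ with analysis window $\gamma$ and synthesis window $\psi$. Your explicit justification of pulling $f$ through the synthesis integral --- convergence of $H$ in the topology of $\mathcal{S}(B)$ via Lemma \ref{lem:F_int_schwartz} combined with continuity of $f$ --- merely makes precise an interchange the paper performs implicitly, and it is correctly argued.
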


\begin{proof}
  \textit{(i)} Let $g \in \mathcal{S}(B)$, then by lemma \ref{lem:V_schwartz}, the term
  \begin{equation*}
    \kappa_{s'}(w) v_{r'}(b) \left< \rho(w,b) \gamma, g \right>
    = \kappa_{s'}(w) v_{r'}(b) \overline{V_\gamma g (w,b)}
  \end{equation*}
    is bounded for all $s', r'$.
  Choosing these large enough, the integral in \eqref{eq:V_inverse_dist} is thus absolutely convergent and, like above, we can estimate
  \begin{align*}
    \left| \left< f, g \right> \right|
    &\le \iint \left| F(w,b) \right| \left| V_\gamma g(w,b) \right| \dd\mu(w) \dd\nu(b) \\
    &\le C \sup_{(w,b) \in B \times \hat{B}} \left| \kappa_{s+s'+n+1} (w) v_{r+r'+2n+1}(b) \right|.
  \end{align*}
  By lemma \ref{lem:V_equiv_norms}, this shows $f \in \mathcal{S}'(B)$.

  \textit{(ii)} Conversely, let $f \in \mathcal{S}'(B)$.
  By lemma \ref{lem:dist_continuous}, $F = V_\psi f$ is continuous and satisfies the growth condition \eqref{eq:F_growth}.
  The integral
  \begin{equation*}
    \bigl< \tilde{f}, g \bigr> = \left< \gamma, \psi \right>^{-1} \iint V_\psi f (w,b) \left< \rho(w,b) \gamma, g \right> \dd\mu(w) \dd\nu(b)
  \end{equation*}
  thus defines an element $\tilde{f} \in \mathcal{S}'(B)$.
  Recalling the inversion formula for $g \in \mathcal{S}(B) \subset \L^2(\mu)$, now with window $\gamma$,
  \begin{equation*}
    g = \left< \psi, \gamma \right>^{-1} \iint V_\gamma g(w,b) \rho(w,b) \psi\, \dd\mu(w) \dd\nu(b),
  \end{equation*}
  we see that
  \begin{equation*}
    \bigl< \tilde{f}, g \bigr> = \left< \gamma, \psi \right>^{-1} \iint \left< f, \rho(w,b) \psi \right> \overline{V_\gamma g(w,b)}\, \dd\mu(w) \dd\nu(b) = \left< f, g \right>.
  \end{equation*}
  This implies $\tilde{f} = f$.
\end{proof}

\subsection{Coorbit Spaces}

A more refined analysis of the voice transform is made possible by considering certain subsets of $\mathcal{S}'(B)$, namely those distributions whose transforms feature decay rates measured by fixed classes of weight functions.
Specifically, let $1 \le p \le \infty$ and let $m: B \times \hat{B} \to \R^+$ be a weight function.
The weighted Lebesgue space
\begin{equation*}
  \L_m^p = \L_m^p(\mu{\otimes}\nu)
\end{equation*}
is of particular interest, consisting of all measurable $F : B \times \hat{B} \to \C$, for which the norm
\begin{equation*}
  \left\Vert F \right\Vert_{\L_m^p} = \left\Vert F \cdot m \right\Vert_{\L^p(\mu \otimes \nu)}
\end{equation*}
is finite.
Its dual is $\L_{1/m}^q$, with $\frac{1}{p} + \frac{1}{q} = 1$.

It is convenient to assume $m$ to be of the form $m(z,b) = \kappa(z) v(b)$, with $\kappa$ and $v$ both radial and submultiplicative.

\bigskip

Let $\psi \in \mathcal{S}_\natural(B)$ be a fixed window.
We define the (weighted) \emph{coorbit space} $M_m^p$ to be the set
\begin{equation*}
  M_m^p = \left\{ f \in \mathcal{S}'(B) : V_\psi f \in \L_m^p \right\}.
\end{equation*}
Also, define the formal adjoint operator $V^*_\psi$ of $V_\psi$ by the integral
\begin{equation*}
  V^*_\psi F = \iint F(w,b) \rho(w,b) \psi \, \dd\mu(w)\dd\nu(b),
\end{equation*}
which is understood in the weak sense, satisfying the identity
\begin{align*}
  \bigl< V^*_\psi F, g \bigr> &= \iint F(w,b) \left< \rho(w,b) \psi , g \right> \dd\mu(w)\dd\nu(b) \\
  &= \iint F(w,b) \overline{V_\psi g (w,b)} \, \dd\mu(w)\dd\nu(b) \\
  &= \left< F, V_\psi g \right>.
\end{align*}
We have the following basic result about coorbit spaces.

\begin{thm}
  \label{thm:coorb_invers}
  Let $m$ satisfy $m(z,b) \le \kappa_s(z) v_r(b)$ for some $s, r > 0$, and let $M_m^p$ be defined in terms of a fixed window $\psi \in \mathcal{S}_\natural(B)$.
  Let $\gamma \in \mathcal{S}_\natural(B)$ satisfy $\left< \psi, \gamma \right> \ne 0$.
  Then the following holds:
  \begin{enumerate}[(i)]
    \item The adjoint operator $V^*_\gamma : \L_m^p \to M_m^p$ is continuous.
    \item The inversion formula \eqref{eq:V_inverse} holds for every $f \in M_m^p$, in other terms,
      \begin{equation*}
        f = \left< \gamma, \psi \right>^{-1} V^*_\gamma V_\psi f.
      \end{equation*}
    \item The coorbit space arising from $\gamma$ is the same as $M_m^p$, with equivalent norms.
  \end{enumerate}
\end{thm}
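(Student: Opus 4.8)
The plan is to reduce all three parts to a single kernel estimate combined with a weighted Schur test, following the standard coorbit scheme but replacing the absent group structure by the covariance of the plane waves together with the Schwartz bounds of Lemma \ref{lem:schwartz_inv} and Lemma \ref{lem:V_schwartz}. First I would observe that, formally, $V_\psi V^*_\gamma$ acts on $\L_m^p$ as an integral operator
\[
  V_\psi(V^*_\gamma F)(w,b) = \iint_{B \times \hat{B}} K(w,b,w',b')\, F(w',b')\, \dd\mu(w')\, \dd\nu(b'), \qquad K(w,b,w',b') = \langle \rho(w',b')\gamma, \rho(w,b)\psi \rangle,
\]
and that the operator $V_\gamma V^*_\psi$ governing the change of window carries the kernel obtained by interchanging $\psi$ and $\gamma$. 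Part \textit{(i)} then amounts to boundedness of this operator on $\L_m^p$; part \textit{(ii)} is essentially already contained in Theorem \ref{thm:dist_inv}\,\textit{(ii)}, the inversion formula \eqref{eq:V_inverse} holding on all of $\mathcal{S}'(B) \supseteq M_m^p$, with part \textit{(i)} guaranteeing that $V^*_\gamma V_\psi f$ again lies in $M_m^p$; and part \textit{(iii)} follows by applying the $\L^2$-inversion $f = \langle\psi,\psi\rangle^{-1} V^*_\psi V_\psi f$ and then $V_\gamma$, which expresses $V_\gamma f$ as the companion kernel operator applied to $V_\psi f$.

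The heart of the argument is therefore the decay of $K$. Writing out the pairing and substituting $z = \varphi_w(u)$, which preserves $\mu$, I would use the explicit transformation behaviour of the plane waves $P_{\lambda,\zeta}$ under the Möbius maps $\varphi_w$ (a cocycle identity that follows from \eqref{eq:moeb_prod}) to rewrite $K(w,b,w',b')$ as a phase factor times $V_\psi\gamma$ evaluated at a point $(w'',b'')$ measuring the separation of $(w,b)$ and $(w',b')$, with $w'' = \varphi_{w'}(w)$. Since $\gamma \in \mathcal{S}_\natural(B)$, Lemma \ref{lem:V_schwartz} shows that $V_\psi\gamma$ decays faster than any $\kappa_{-s}(w'') v_{-r}(b'')$. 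This is exactly what is needed to convert the naive factorised bound --- rapid decay in $(w,b)$ but uncontrolled growth in $(w',b')$, which is all that Lemmas \ref{lem:schwartz_inv} and \ref{lem:V_schwartz} yield directly --- into genuine off-diagonal decay of the kernel in the hyperbolic distance between the two phase-space points.

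With such an estimate in hand the weighted Schur test is routine. Using the hypothesis $m = \kappa v$ with $\kappa, v$ radial and submultiplicative, the permutation inequalities \eqref{eq:kappa_perm} and \eqref{eq:v_perm} give $m(w,b)/m(w',b') \le \kappa(w'') v(b'')$, which is absorbed by the surplus decay of $K$; choosing $s, r$ large enough makes both
\[
  \sup_{w',b'} \iint |K|\, \frac{m(w,b)}{m(w',b')}\, \dd\mu(w)\, \dd\nu(b) \quad \text{and} \quad \sup_{w,b} \iint |K|\, \frac{m(w,b)}{m(w',b')}\, \dd\mu(w')\, \dd\nu(b')
\]
finite, whence $V_\psi V^*_\gamma$ (and likewise $V_\gamma V^*_\psi$) is bounded on $\L_m^p$ for every $1 \le p \le \infty$. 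This delivers \textit{(i)}, and for \textit{(iii)} one reads off $\left\Vert V_\gamma f \right\Vert_{\L_m^p} \lesssim \left\Vert V_\psi f \right\Vert_{\L_m^p}$, the reverse inequality following by interchanging the roles of $\psi$ and $\gamma$.

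I expect the decisive obstacle to be precisely the kernel estimate of the second paragraph. In the classical group setting the reproducing kernel depends only on $(w,b)^{-1}(w',b')$, so off-diagonal decay is immediate; here neither the translations nor the modulations form a group --- products of plane waves shift the weight exponent $n$ --- so the covariance identity holds only up to a nontrivial phase and Jacobian, and one must verify that the separation variable $(w'',b'')$ genuinely approaches the boundary and infinity as $(w,b)$ and $(w',b')$ move apart, uniformly enough to feed the Schur integrals. Carrying out this weight-and-decay bookkeeping, rather than the otherwise standard functional-analytic steps, is where the real work lies.
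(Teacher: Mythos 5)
Your functional-analytic shell coincides with the paper's: part \textit{(ii)} is indeed immediate from theorem \ref{thm:dist_inv} once \textit{(i)} is known, part \textit{(iii)} follows by interchanging windows, and part \textit{(i)} reduces to boundedness of $V_\psi V^*_\gamma$ on $\L_m^p$ via the kernel $K(X,Y) = \langle \rho(Y)\gamma, \rho(X)\psi\rangle$ and the weighted Young/Schur inequality (theorem \ref{thm:young}). (One small omission: before computing with the kernel pointwise, the paper first checks via H\"older and lemma \ref{lem:V_equiv_norms} that $V^*_\gamma F \in \mathcal{S}'(B)$, so that $V_\psi V^*_\gamma F$ is defined and continuous; your ``formally'' skips this.) But the step you yourself call the heart of the argument --- the off-diagonal decay of $K$ --- is not actually established, and the mechanism you propose for it does not work as stated. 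Substituting $z = \varphi_{w'}(u)$ and using the addition theorem \eqref{eq:add_thm} does handle the translation part (radiality of $\psi$ absorbs the unitary in $\varphi_w \circ \varphi_{w'} = U \circ \varphi_{w''}$, $w'' = \varphi_{w'}(w)$), but the modulation part yields $P_{\lambda',\varphi_{w'}(\zeta')}(u)\, \overline{P_{\lambda,\varphi_{w'}(\zeta)}(u)}$, which for $\zeta \ne \zeta'$ is not a plane wave at any point $b''$ of $\hat{B}$, and even for coinciding boundary points the exponents combine with a shift by $n$. Moreover the extracted prefactor $P_{\lambda',\zeta'}(w')\,\overline{P_{\lambda,\zeta}(w')}$ is not a phase: for real $\lambda$ its modulus is $P_{0,\zeta'}(w') P_{0,\zeta}(w')$, which is unbounded as $w'$ approaches the boundary. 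So there is no identity ``$K = $ phase $\times\, V_\psi\gamma(w'',b'')$''; you flag this obstacle in your last paragraph, but flagging it is not resolving it, and without the estimate the Schur integrals --- i.e.\ the entire content of \textit{(i)} --- remain unproved.

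The paper avoids the covariance route altogether. It uses the symmetry $V_\gamma\bigl(\rho(X)\psi\bigr)(Y) = \overline{V_\psi\bigl(\rho(Y)\gamma\bigr)(X)}$ together with lemma \ref{lem:V_schwartz}: since $\rho(Y)\gamma \in \mathcal{S}(B)$, the kernel decays rapidly in $X$ for fixed $Y$, and by symmetry rapidly in $Y$ for fixed $X$; the resulting uniform integrability against $m(X)/m(Y)$ is recorded in lemma \ref{lem:kernel_int}, and then theorem \ref{thm:young} gives \eqref{eq:young_coorb1}. If you want to make this fully quantitative yourself (the constants $C_{s,r}$ in lemma \ref{lem:V_schwartz} a priori depend on the Schwartz function, hence on $Y$), the repair is to trace the quantitative bound of lemma \ref{lem:schwartz_inv}\,\textit{(i)}: it shows $\bigl\Vert \rho(Y)\gamma \bigr\Vert_{(s)}$ grows only polynomially in $Y$, so one obtains two bounds of the form ``arbitrarily fast decay in one variable times fixed polynomial growth in the other,'' in both orders; taking, e.g., the geometric mean of the two bounds yields joint off-diagonal decay beating any $\kappa_s \otimes v_r$, which suffices for the Schur test under the hypothesis $m(z,b) \le \kappa_s(z) v_r(b)$ and the permutation inequalities \eqref{eq:kappa_perm}, \eqref{eq:v_perm}. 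That bookkeeping, not a group-like cocycle identity, is what replaces the classical dependence of the kernel on $(w,b)^{-1}(w',b')$ here.
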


\begin{proof}
  \textit{(i)} If $\frac{1}{p} + \frac{1}{q} = 1$, it follows from Hölder's inequality that for every $g \in \mathcal{S}(B)$,
  \begin{equation*}
    \bigl| \bigl< V^*_\gamma F, g \bigr> \bigr| = \bigl| \bigl< F, V_\gamma g \bigr> \bigr|
    \le \left\Vert F \right\Vert_{\L_m^p} \left\Vert V_\gamma g \right\Vert_{\L_{1/m}^q}
    \le \left\Vert F \right\Vert_{\L_m^p} \left\Vert g \right\Vert_{s,r} \left\Vert \kappa_{-s} v_{-r} \right\Vert_{\L_{1/m}^q}.
  \end{equation*}
  The last term is finite for $s,r > 0$ large enough, which shows continuity of $V^*_\gamma F$ with regard to the equivalent seminorms $\left\Vert \cdot \right\Vert_{s,r}$ on $\mathcal{S}(B)$, i.e.\ $V^*_\gamma F \in \mathcal{S}'(B)$.
  Its voice transform $V_\psi V^*_\gamma F$ is therefore continuous, and we have the following pointwise estimate,
  \begin{align*}
    \bigl| V_\psi V^*_\gamma F (w,b) \bigr|
    &= \bigl| \bigl< V^*_\gamma F, \rho(w,b) \psi \bigr> \bigr|
    = \bigl| \bigl< F, V_\gamma \bigl( \rho(w,b) \psi \bigr) \bigr> \bigr| \\
    &\le \iint \left| F(w',b') \right| \left| V_\gamma \bigl( \rho(w,b) \psi \bigr) (w',b') \right| \dd\mu(w') \dd\nu(b').
  \end{align*}
  Now by lemma \ref{lem:V_schwartz}, $V_\gamma f$ has fast decay whenever $f \in \mathcal{S}(B)$.
  Since
  \begin{equation*}
    V_\gamma \bigl( \rho(w,b) \psi \bigr) (w',b') = \left< \rho(w,b)\psi, \rho(w',b')\gamma \right> = \overline{V_\psi \bigl( \rho(w',b') \gamma \bigr) (w,b)},
  \end{equation*}
  this term has fast decay in all variables.
  Thus, by the weighted Young inequality (theorem \ref{thm:young}), we obtain
  \begin{equation}
    \label{eq:young_coorb1}
    \bigl\Vert V^*_\gamma F \bigr\Vert_{M_m^p} = \bigl\Vert V_\psi V^*_\gamma F \bigr\Vert_{\L_m^p} \le C \left\Vert F \right\Vert_{\L_m^p}.
  \end{equation}

  \textit{(ii)} is now immediate: If $f \in M_m^p$, then $V_\psi f \in \L_m^p$, and $\tilde{f} = \left< \gamma, \psi \right>^{-1} V^*_\gamma V_\psi f$ defines an element in $M_m^p$.
  Since $M_m^p \subset \mathcal{S}'(B)$, theorem \ref{thm:dist_inv} implies $\tilde{f} = f$.

  \textit{(iii)} By the above,
  \begin{equation*}
    \left\Vert V_\psi f \right\Vert_{\L_m^p} = \left\Vert f \right\Vert_{M_m^p} = \left< \gamma, \gamma \right>^{-1} \bigl\Vert V^*_\gamma V_\gamma f \bigr\Vert_{M_m^p} \le \tilde{C}_\gamma \left\Vert V_\gamma f \right\Vert_{\L_m^p}
  \end{equation*}
  holds for every nonzero $\gamma \in \mathcal{S}_\natural(B)$.
  Interchanging $\psi$ and $\gamma$ we obtain
  \begin{equation*}
    \left\Vert V_\gamma f \right\Vert_{\L_m^p} \le \tilde{C}_\psi \left\Vert V_\psi f \right\Vert_{\L_m^p},
  \end{equation*}
  so both norms are equivalent.
\end{proof}

Coorbit spaces are Banach spaces, and they contain $\mathcal{S}(B)$ as a dense subspace.

\begin{thm}
  Let $r, s > 0$ be fixed and let $m(z,b) \le \kappa_s (z) v_r (b)$.
  If $1 \le p < \infty$, then $\mathcal{S}(B)$ is a dense subspace of $M_m^p$.
\end{thm}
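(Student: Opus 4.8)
The plan is to deduce density from the inversion formula of theorem \ref{thm:coorb_invers} together with the continuity of the adjoint operator, thereby reducing the problem to an elementary density statement in the image space $\L_m^p$, where the assumption $p < \infty$ can be exploited. Before that, I would record the inclusion $\mathcal{S}(B) \subset M_m^p$: if $g \in \mathcal{S}(B)$, then $V_\psi g$ has fast decay by lemma \ref{lem:V_schwartz}, and since $m \le \kappa_s v_r$ grows only polynomially, the product $V_\psi g \cdot m$ still has fast decay and hence lies in $\L^p(\mu \otimes \nu)$; thus $g \in M_m^p$.

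For the density itself, fix $f \in M_m^p$ and set $F = V_\psi f \in \L_m^p$. The crucial point -- and the only place where the hypothesis $p < \infty$ enters -- is that bounded, compactly supported functions are dense in $\L_m^p$. Concretely, I would choose an increasing sequence of compact sets $K_n \subset B \times \hat{B}$ exhausting $B \times \hat{B}$ (each bounded away from the boundary $|w| = 1$ and with $|\lambda|$ bounded), and set $F_n = F \cdot \chi_{K_n} \cdot \chi_{\{ |F| \le n \}}$. Each $F_n$ is bounded with compact support, and $\left\Vert F - F_n \right\Vert_{\L_m^p} \to 0$ by dominated convergence, since $|F_n|^p m^p \le |F|^p m^p$ is integrable and $F_n \to F$ pointwise almost everywhere.

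Because each $F_n$ is bounded and vanishes outside a compact subset of $B \times \hat{B}$, on which $\kappa_{-s}(w) v_{-r}(b)$ is bounded below by a positive constant, $F_n$ satisfies the fast-decay condition \eqref{eq:F_fast} for all $s, r > 0$. Hence lemma \ref{lem:F_int_schwartz} applies and
\[
  f_n := \left< \psi, \psi \right>^{-1} V^*_\psi F_n \in \mathcal{S}(B).
\]
Finally, taking $\gamma = \psi$ in theorem \ref{thm:coorb_invers} gives both the representation $f = \left< \psi, \psi \right>^{-1} V^*_\psi F$ and the continuity of $V^*_\psi : \L_m^p \to M_m^p$, so that
\[
  \left\Vert f - f_n \right\Vert_{M_m^p} = \bigl| \left< \psi, \psi \right> \bigr|^{-1} \bigl\Vert V^*_\psi (F - F_n) \bigr\Vert_{M_m^p} \le C \left\Vert F - F_n \right\Vert_{\L_m^p} \longrightarrow 0.
\]
This exhibits $f$ as an $M_m^p$-limit of Schwartz functions and proves density.

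I expect the main (though modest) obstacle to be the bookkeeping needed to confirm that the truncations $F_n$ genuinely satisfy \eqref{eq:F_fast} -- that is, that compact support inside $B \times \hat{B}$ forces the required polynomial decay in all the weights $\kappa_{-s}$, $v_{-r}$ simultaneously -- together with the observation that the whole argument collapses at $p = \infty$, where compactly supported functions fail to be dense in $\L_m^\infty$.
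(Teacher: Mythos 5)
Your proposal is correct and follows essentially the same route as the paper: truncate $V_\psi f$ to an exhausting sequence of compact sets, use the fast-decay lemma (your citation of lemma \ref{lem:F_int_schwartz} is in fact the more precise one) to see the truncated pieces come from $\mathcal{S}(B)$, and conclude via the inversion formula and continuity of $V_\psi^*$ from theorem \ref{thm:coorb_invers} together with dominated convergence, which is exactly where $p < \infty$ enters. Your extra level-set truncation $\chi_{\{|F| \le n\}}$ is harmless but unnecessary, since $V_\psi f$ is continuous by lemma \ref{lem:dist_continuous} and hence already bounded on each compact set.
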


\begin{proof}
  The inclusion $\mathcal{S}(B) \subset M_m^p$ follows from
  \begin{equation*}
    \left\Vert f \right\Vert_{M_m^p} = \left\Vert V_\psi f \right\Vert_{\L_m^p}
    \le \left\Vert \kappa_s v_r V_\psi f \right\Vert_{\infty} \left\Vert \kappa_{-s} v_{-r} \right\Vert_{\L_m^p},
  \end{equation*}
  which is finite, if $f \in \mathcal{S}(B)$ and $s, r > 0$ are chosen big enough.

  Pick a real sequence $(r_j)$ featuring $0 < r_j < 1$ and $r_j \nearrow 1$.
  Then the sets
  \begin{equation*}
    K_j = \{ (z, \lambda) \in B \times \R : |z| \le r_j, |\lambda| \le j \} \times S
  \end{equation*}
  form an exhausting sequence in $B \times \hat{B}$.
  Let $f \in M_m^p$, set $F_j (z,b) = V_\psi f (z,b) \mathbf{1}_{K_j} (z,b)$ ($\mathbf{1}_{K}$ denotes the characteristic function on $K$), and $f_j = V_\psi^* F_j$.
  Then every $F_j$ has fast decay, thus $f_j \in \mathcal{S}(B)$ by lemma \ref{lem:V_schwartz}.
  We may assume $\left< \psi, \psi \right> = 1$, then by theorem \ref{thm:coorb_invers},
  \begin{equation*}
    \left\Vert f - f_j \right\Vert_{M_m^p}
    = \left\Vert V_\psi^* V_\psi (f - f_j) \right\Vert_{M_m^p}
    = \left\Vert V_\psi^* (V_\psi f - F_j) \right\Vert_{M_m^p}
    \le C \left\Vert V_\psi f - F_j \right\Vert_{\L_m^p}.
  \end{equation*}
  If $p < \infty$, then $\left\Vert V_\psi f - F_j \right\Vert_{\L_m^p} \to 0$, therefore $f_j \to f$ in $M_m^p$.
\end{proof}

\begin{thm}
  Impose the same assumption on $m$ as before.
  Then $M_m^p$ is a Banach space for every $1 \le p \le \infty$.
  Its dual is $(M_m^p)' = M_{1/m}^q$, where $1/p + 1/q = 1$.
\end{thm}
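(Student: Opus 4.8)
The plan is to realize $M_m^p$ as a closed, complemented subspace of the weighted Lebesgue space $\L_m^p$ and to read off both assertions from there. Fix the window $\psi \in \mathcal{S}_\natural(B)$ defining $M_m^p$ and normalize $\langle \psi, \psi \rangle = 1$. By definition $\left\Vert f \right\Vert_{M_m^p} = \left\Vert V_\psi f \right\Vert_{\L_m^p}$, so $V_\psi : M_m^p \to \L_m^p$ is an isometry, and it is injective because the inversion formula of theorem \ref{thm:coorb_invers} gives $f = V_\psi^* V_\psi f$. I would then set $P = V_\psi V_\psi^*$. By theorem \ref{thm:coorb_invers} \textit{(i)} the adjoint $V_\psi^* : \L_m^p \to M_m^p$ is bounded, hence $P : \L_m^p \to \L_m^p$ is bounded, and the inversion formula yields $P^2 = P$ together with the reproducing identity $P V_\psi f = V_\psi f$. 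Thus the range $\mathcal{R}_p := V_\psi(M_m^p)$ coincides with the fixed-point set $\{ F \in \L_m^p : PF = F \}$, which is closed. Since $V_\psi$ is an isometric isomorphism of $M_m^p$ onto the closed subspace $\mathcal{R}_p$ of the Banach space $\L_m^p$, the space $M_m^p$ is itself a Banach space for every $1 \le p \le \infty$.

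For the duality I first define a pairing extending the $\L^2$ product: for $f \in M_m^p$ and $h \in M_{1/m}^q$ set $\langle f, h \rangle := \langle V_\psi f, V_\psi h \rangle_{\L^2(\mu \otimes \nu)}$. Hölder's inequality between $\L_m^p$ and its dual $\L_{1/m}^q$ gives $\left| \langle f, h \rangle \right| \le \left\Vert f \right\Vert_{M_m^p} \left\Vert h \right\Vert_{M_{1/m}^q}$, and the orthogonality relations (theorem \ref{thm:orth_rel}) show that this pairing agrees with $\langle f, h \rangle_{\L^2(\mu)}$ whenever $f, h \in \L^2(B,\mu)$. Note that $M_{1/m}^q$ is a bona fide coorbit space, since $1/m \le 1 \le \kappa_s v_r$, so theorem \ref{thm:coorb_invers} applies with weight $1/m$. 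Consequently $h \mapsto \langle \cdot, h \rangle$ is a bounded embedding $M_{1/m}^q \hookrightarrow (M_m^p)'$; it is injective because the pairing restricts to the $\mathcal{S}'$--$\mathcal{S}$ duality on the dense subspace $\mathcal{S}(B) \subset M_m^p$, so $\langle \cdot, h \rangle = 0$ forces $h = 0$ in $\mathcal{S}'(B)$.

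It remains to prove surjectivity. Given $\ell \in (M_m^p)'$, I would transport it to $\mathcal{R}_p$ via the isometry and extend by Hahn--Banach to all of $\L_m^p$; identifying $(\L_m^p)' = \L_{1/m}^q$ produces $G \in \L_{1/m}^q$ with $\ell(f) = \langle V_\psi f, G \rangle_{\L^2}$ and $\left\Vert G \right\Vert_{\L_{1/m}^q} = \left\Vert \ell \right\Vert$. Setting $h := V_\psi^* G$, theorem \ref{thm:coorb_invers} \textit{(i)} (with weight $1/m$) gives $h \in M_{1/m}^q$ and $\left\Vert h \right\Vert_{M_{1/m}^q} \le C \left\Vert G \right\Vert_{\L_{1/m}^q}$. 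The Hermitian symmetry $\langle \rho(w',b')\psi, \rho(w,b)\psi \rangle = \overline{\langle \rho(w,b)\psi, \rho(w',b')\psi \rangle}$ of the reproducing kernel makes $P$ self-adjoint for the $\L^2(\mu \otimes \nu)$ pairing, whence, using $V_\psi h = PG$ and $P V_\psi f = V_\psi f$, one gets $\langle f, h \rangle = \langle V_\psi f, PG \rangle = \langle P V_\psi f, G \rangle = \langle V_\psi f, G \rangle = \ell(f)$. Combined with the embedding above, this identifies $(M_m^p)'$ with $M_{1/m}^q$ with equivalent norms.

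The hard part is the endpoint $p = \infty$ in the surjectivity step: the identification $(\L_m^p)' = \L_{1/m}^q$ is genuinely available only for $1 \le p < \infty$, so for $p = \infty$ the Hahn--Banach extension need not return an element of $\L_{1/m}^1$. For that case I would instead read the statement as the predual identity $M_m^\infty = (M_{1/m}^1)'$, obtained by running the same argument with the roles of $(m,p)$ and $(1/m,q)$ exchanged and invoking weak-$*$ density of $\mathcal{S}(B)$. Everything else — the bookkeeping of the constant $\langle \psi, \psi \rangle$ and the window-independence from theorem \ref{thm:coorb_invers} \textit{(iii)} that keeps the pairing symmetric — is routine but must be tracked carefully.
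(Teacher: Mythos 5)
Your proof is correct, and in substance it follows the same route as the paper: completeness by realizing $M_m^p$ isometrically as a closed subspace of $\L_m^p$ via $V_\psi$, and duality by the standard modulation-space argument, which the paper does not write out but simply delegates to theorem 11.3.6 of \cite{groechenig}. The differences are presentational and in one place substantive. For completeness the paper runs a Cauchy-sequence argument: given $(f_j)$ Cauchy in $M_m^p$, the images $F_j = V_\psi f_j$ converge to some $F \in \L_m^p$, and the bound $\Vert V_\psi^*(V_\psi f_j - F)\Vert_{M_m^p} \le C \Vert V_\psi f_j - F\Vert_{\L_m^p}$ from theorem \ref{thm:coorb_invers} shows $f_j \to V_\psi^* F$; your idempotent $P = V_\psi V_\psi^*$ with range equal to the fixed-point set $\ker(\id - P)$ packages the same two ingredients (boundedness of $V_\psi^*$ and the inversion formula) into a closed-range statement, so the arguments are equivalent. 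For duality, your Hölder pairing, Hahn--Banach extension on the closed range, and projection back by the self-adjointness of $P$ (which needs the Fubini interchange, justified by the kernel integrability of lemma \ref{lem:kernel_int}) is precisely the Gröchenig scheme the paper invokes; spelling it out is a genuine service, and your observation that the identification $(\L_m^p)' = \L_{1/m}^q$ is only available for $p < \infty$ is well taken --- Gröchenig's theorem 11.3.6 is indeed stated for $1 \le p, q < \infty$, so the paper's unqualified duality claim should be read with that restriction, and your reformulation of the endpoint as the predual identity $M_m^\infty = (M_{1/m}^1)'$ is the correct repair. Your verification that the weight $1/m$ is again admissible for theorem \ref{thm:coorb_invers} (since $1/m \le 1 \le \kappa_s v_r$) is a detail the citation-based proof leaves implicit and is worth having on record.
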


\begin{proof}
  Let $\psi \in \mathcal{S}_\natural(B)$ with $\left\Vert \psi \right\Vert_{\L^2(\mu)} = 1$.
  Then $VM_m^p$ is a subspace of $\L_m^p$ and isometrically isomorphic to $M_m^p$.
  Let $(f_j)$ be a Cauchy sequence in $M_m^p$, then $(F_j)$, with $F_j = V_\psi f_j$, is a Cauchy sequence in $\L_m^p$, and converges to a unique element $F \in \L_m^p$.
  We set $f = V_\psi^*F$, then again by theorem \ref{thm:coorb_invers},
  \begin{equation*}
    \left\Vert f_j - f \right\Vert_{M_m^p}
    = \left\Vert V_\psi^* (V_\psi f_j - F) \right\Vert_{M_m^p}
    \le C \left\Vert V_\psi f_j - F \right\Vert_{\L_m^p}
  \end{equation*}
  This shows $f_j \to f$ in $M_m^p$ and $V_\psi f = F$.
  Thus $VM_m^p$ is closed, and $M_m^p$ is complete.

  The proof of the second statement is virtually identical to theorem 11.3.6 in \cite{groechenig}.
\end{proof}

\subsection{Reproducing Kernel}

Let $\psi \in \mathcal{S}_\natural(B)$ be a fixed window with $\left< \psi, \psi \right> = 1$.
Then the inversion formula on $M_m^p$ takes the simpler form $V^*_\psi V_\psi f = f$, and by applying $V_\psi$, this can be rewritten as
\begin{equation*}
  V_\psi f(X) = \bigl< V^*_\psi V_\psi f, \rho(X) \psi \bigr> = \big< V_\psi f, V_\psi \bigl( \rho(X)\psi \bigr) \bigr>
  = \big< V_\psi f, R(X, \cdot) \bigr>
\end{equation*}
where
\begin{equation*}
  R(X,Y) = V_\psi \bigl( \rho(X) \psi \bigr) (Y) = \left< \rho(X)\psi, \rho(Y)\psi \right> = \overline{R(Y, X)}
\end{equation*}
for $X, Y \in B \times \hat{B}$.
Thus, $R$ serves as a reproducing kernel on the space
\begin{equation*}
  VM_m^p = \left\{ V_\psi f : f \in M_m^p \right\}.
\end{equation*}
We observe that the prerequisites of theorem \ref{thm:coorb_invers} can be stated in terms of $R$, since they lead to the following integrability condition, which suffices to apply the weighted Young inequality in \eqref{eq:young_coorb1}.

\begin{lem}
  \label{lem:kernel_int}
  Let $\psi \in \mathcal{S}_\natural(B)$ and $m(w,b) \le \kappa_s(w) v_r(b)$. Then the integrals
  \begin{equation*}
    \iint\limits_{B \times \hat{B}} \bigl| R(X,Y) \bigr| \frac{m(X)}{m(Y)} \, \dd(\mu{\otimes}\nu)(Y) \quad \text{and} \quad  \iint\limits_{B \times \hat{B}} \bigl| R(X,Y) \bigr| \frac{m(X)}{m(Y)} \, \dd(\mu{\otimes}\nu)(X) \qquad 
  \end{equation*}
  are bounded by a constant $C_\psi$, independent of $X$ and $Y$ respectively.
\end{lem}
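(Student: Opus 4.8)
The plan is to prove both bounds by a Schur-type argument: first control the weight ratio $m(X)/m(Y)$ by a submultiplicative factor depending only on the relative position of $X$ and $Y$, and then show that $\left|R(X,Y)\right|$ decays rapidly enough in that relative position to be integrated against this factor and against the polynomially growing Plancherel density carried by $d\nu$. Writing $X=(w,\lambda,\zeta)$ and $Y=(w',\lambda',\zeta')$, the permutation inequalities \eqref{eq:kappa_perm} and \eqref{eq:v_perm}, together with $\left|\varphi_{w'}(w)\right|=\left|\varphi_w(w')\right|$ and the radiality of $\kappa$, give
\[ \frac{m(X)}{m(Y)}=\frac{\kappa(w)\,v(\lambda)}{\kappa(w')\,v(\lambda')}\le\kappa\!\left(\varphi_{w'}(w)\right)v(\lambda-\lambda'). \]
Since this bound is symmetric under $X\leftrightarrow Y$, both integrals reduce to one and the same estimate, and because $\kappa\le\kappa_s$, $v\le v_r$ by hypothesis, it suffices to integrate $\left|R(X,Y)\right|\kappa_s(\varphi_{w'}(w))\,v_r(\lambda-\lambda')$ over $Y$ (resp. $X$), uniformly in the remaining variable.

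The heart of the matter is a uniform rapid-decay estimate for $R(X,Y)=\int_B P_{\lambda,\zeta}\,\overline{P_{\lambda',\zeta'}}\,(\psi\circ\varphi_w)\,\overline{\psi\circ\varphi_{w'}}\,d\mu$. For the translation variable I would exploit near-covariance: substituting $z=\varphi_{w'}(u)$ and using the $\mu$-invariance of the involution together with the cocycle identity $P_{\lambda,\zeta}(\varphi_a(u))=P_{\lambda,\varphi_a(\zeta)}(u)\,/\,P_{\lambda,\varphi_a(\zeta)}(a)$, which is a direct consequence of \eqref{eq:moeb_prod}, one rewrites $R(X,Y)$ in terms of $V_\psi\psi$ at the relative point $\varphi_{w'}(w)$, up to a unimodular phase and an amplitude factor that can be absorbed into the translation weight. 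Its decay $\left|R(X,Y)\right|\le C_N\,\kappa_{-N}(\varphi_{w'}(w))$ (for every $N$) then follows from lemma \ref{lem:V_schwartz}; the role of the covariance is precisely to move the $X$-dependence of the naive bound (which by lemma \ref{lem:schwartz_inv} grows in $X$ through the Schwartz norms of $\rho(X)\psi$) into the relative variable, thereby making the estimate uniform.

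For the frequency variable I would read $b'\mapsto R(X,Y)$ as the Helgason transform of the Schwartz function $z\mapsto\rho(X)\psi(z)\,\overline{\psi(\varphi_{w'}(z))}$; its smoothness, through the self-adjointness of $\LB$, forces rapid decay in $\lambda'$, while the modulation factor $P_{\lambda,\zeta}$ carried by $\rho(X)\psi$ shifts the concentration so that the decay is effectively in $\lambda-\lambda'$. The main obstacle lies here: by \eqref{eq:hc_poly} we have $\left|\hc(\lambda')\right|^{-2}\sim c_0\,v_{2n-1}(\lambda')$, so decay in $\lambda-\lambda'$ alone does \emph{not} beat $d\nu$ near the diagonal $\zeta'=\zeta$, where the two horospherical phases coincide and no separate $\lambda'$-decay is available. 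What must be shown is that as $\left|\lambda'\right|\to\infty$ the kernel is sizeable only on a neighbourhood of $\zeta'=\zeta$ whose $\sigma$-measure shrinks like $v_{-(2n-1)}(\lambda')$, exactly compensating the density; this is an oscillatory-integral estimate in the sphere variable (the coupling between the angular separation of $\zeta,\zeta'$ and $\lambda'$ supplying the missing decay off the diagonal), and I expect it to be the delicate step. Concretely, the goal is the uniform bound $\int_{\hat{B}}\left|R(X,Y)\right|v_r(\lambda-\lambda')\,d\nu(b')\le C_\psi'$.

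Granting this estimate, I would finish by separating the two integrations. Choosing $N$ larger than $s$, $r$ and the relevant integrability thresholds, the translation part is handled by $\left|\varphi_{w'}(w)\right|=\left|\varphi_w(w')\right|$ and the $\mu$-invariance of $w'\mapsto\varphi_w(w')$, which yield $\int_B\kappa_{-N+s}(\varphi_{w'}(w))\,d\mu(w')=\int_B\kappa_{-N+s}\,d\mu<\infty$ (finite since $\int_B\kappa_{-M}\,d\mu<\infty$ for $M>n$), uniformly in $w$; the frequency part is exactly the uniform bound of the previous paragraph. Combining the two factors shows that each of the iterated integrals is dominated by a constant $C_\psi$ independent of $X$, respectively $Y$, which is the assertion of the lemma.
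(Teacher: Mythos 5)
Your reduction of the weight ratio is correct: \eqref{eq:kappa_perm} gives $\kappa(w) \le \kappa(w')\,\kappa(\varphi_{w'}(w))$ and \eqref{eq:v_perm} gives $v(\lambda) \le v(\lambda')\,v(\lambda-\lambda')$, so $m(X)/m(Y) \le \kappa(\varphi_{w'}(w))\,v(\lambda-\lambda')$ as you claim. But the proposal is not a proof: its central step, the uniform bound $\int_{\hat{B}} |R(X,Y)|\, v_r(\lambda-\lambda')\, \dd\nu(b') \le C_\psi'$, is stated as a goal, identified by you as ``the delicate step'' (the oscillatory concentration near $\zeta'=\zeta$ with $\sigma$-measure shrinking like $v_{-(2n-1)}(\lambda')$), and then everything after ``granting this estimate'' is bookkeeping. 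Since that estimate essentially \emph{is} the lemma, nothing is established. The translation-side reduction is likewise only asserted: the cocycle identity from \eqref{eq:add_thm} produces the factor $P_{\lambda,\zeta}(w')$, whose modulus $P_{0,\zeta}(w')$ is \emph{unbounded} as $w' \to \zeta$, so ``absorbing the amplitude into the translation weight'' is a nontrivial claim that is nowhere verified; and after the substitution the function whose voice transform you invoke still depends on all four frequency parameters, so lemma \ref{lem:V_schwartz} alone does not deliver the claimed bound $|R(X,Y)| \le C_N\, \kappa_{-N}(\varphi_{w'}(w))$ with $C_N$ uniform in $(\lambda,\zeta,\lambda',\zeta')$ --- obtaining that uniformity is exactly as hard as the lemma itself.

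For comparison, the paper takes a much shorter route and does not pass through relative coordinates at all: the lemma is presented as a record of the estimate already used in the proof of theorem \ref{thm:coorb_invers}\,\textit{(i)}. There one writes $R(X,\cdot) = V_\psi\bigl(\rho(X)\psi\bigr)$ with $\rho(X)\psi \in \mathcal{S}(B)$ by lemma \ref{lem:schwartz_inv}, obtains fast decay from lemma \ref{lem:V_schwartz}, and uses the symmetry $R(X,Y) = \overline{R(Y,X)} = \overline{V_\psi\bigl(\rho(Y)\psi\bigr)(X)}$ to conclude fast decay in all variables; since $m \le \kappa_s v_r$ is only polynomially bounded, the weighted integrals are then controlled as required for the weighted Young inequality (theorem \ref{thm:young}). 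Your underlying worry --- that the constants in lemma \ref{lem:V_schwartz} depend on the Schwartz seminorms of $\rho(X)\psi$, which by lemma \ref{lem:schwartz_inv}\,\textit{(i)} grow polynomially in $X$ --- is a legitimate reading of a point the paper treats tersely, and it is to your credit that you made the uniformity issue explicit. But the remedy the paper intends stays within lemmas \ref{lem:schwartz_inv}--\ref{lem:V_equiv_norms} and the symmetry of $R$, whereas you replace it with a near-covariance plus stationary-phase program whose two key estimates remain open. As it stands, the proposal is a plausible plan of attack, not a proof of the lemma.
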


So far, we have only considered weights of polynomial growth.
The extension to general weights will require to impose additional restrictions on the kernel $R$.
To simplify notation, we write points in $B \times \hat{B}$ in capital letters and denote by
\begin{equation*}
  \xi = \mu \otimes \nu
\end{equation*}
the product measure on this space.

We call $\psi \in \mathcal{S}_\natural(B)$ \emph{admissible}, if there exists a constant $C_\psi$ so that for all $X \in B \times \hat{B}$,
\begin{equation}
  \iint \left| R(X, Y) \right| \frac{m(Y)}{m(X)}\, \dd\xi(Y) \le C_\psi.
  \label{eq:kernel_int_cond}
\end{equation}
Note that this implies $R(X, \cdot) \in \L_m^1$.
Let $\psi$ be admissible, set
\begin{equation*}
  H_m^1 = \{ f \in \L^2(\mu) : V_\psi f \in \L_m^1 \},
\end{equation*}
and let $(H_m^1)'$ denote the space of continuous linear functionals on $H_m^1$.
By this duality, $V_\psi$ extends to $(H_m^1)'$ by setting
\begin{equation*}
  V_\psi f (X) = \left< f, \rho(X) \psi \right>,
\end{equation*}
which is well defined, since $\rho(X) \psi \in H_m^1$ by \eqref{eq:kernel_int_cond}.
By imposing a minimal growth condition on $m$, we may further assume that
\begin{equation*}
  \sup_{X \in B \times \hat{B}} \left| R(X, X) \right| m(X)^{-1} \le C_\psi'.
\end{equation*}
We then obtain the continuous and dense embeddings
\begin{equation}
  H_m^1 \hookrightarrow \L^2(B,\mu) \hookrightarrow (H_m^1)'.
  \label{eq:dense_embed}
\end{equation}
Indeed, let us consider the first inclusion.
Since $H_m^1 \subset \L^2(\mu)$, we may use \eqref{eq:orth_rel} and the Cauchy-Schwarz inequality to obtain
\begin{align*}
  \left\Vert f \right\Vert_{\L^2(\mu)}^2 
  = \left\Vert V_\psi f \right\Vert_{\L^2(\mu)}^2
  &= \iint \left| \left< f, \rho(X) \psi \right> \right| \left| V_\psi f(X) \right| \dd\xi(X) \\
  &= \iint \left| \left< f, m(X)^{-1} \rho(X) \psi \right> \right| \left| V_\psi f(X) \right| m(X)\, \dd\xi(X) \\
  &\le \left\Vert f \right\Vert_{\L^2(\mu)} \iint \left\Vert m(X)^{-1} \rho(X) \psi \right\Vert_{\L^2(\mu)} \left| V_\psi f(X) \right| m(X)\, \dd\xi(X) \\
  &= \left\Vert f \right\Vert_{\L^2(\mu)} \iint \left| m(X)^{-1} R(X,X) \right| \left| V_\psi f(X) \right| m(X)\, \dd\xi(X) \\
  &\le C_\psi' \left\Vert f \right\Vert_{\L^2(\mu)} \left\Vert V_\psi f \right\Vert_{\L_m^1}.
\end{align*}
Therefore, $\left\Vert f \right\Vert_{\L^2(\mu)} \le C_\psi' \left\Vert f \right\Vert_{H_m^1}$ and the first inclusion is continuous.
Next, $V_\psi$ is injective on $\L^2(\mu)$, which means the set
\begin{equation*}
  \{ \rho(X) \psi : X \in B \times \Hat{B} \}
\end{equation*}
is total in $H_m^1$.
Hence, the first embedding is dense.

The second embedding is obtained by a dual result; we refer to lemma A.1 in \cite{da_st_te2}.

\begin{thm}
  \hspace{0ex}
  \begin{enumerate}[(i)]
    \item The operator $V_\psi : (H_m^1)' \to \L_{1/m}^\infty$ is bounded and injective, furthermore, $V_\psi f$ is continuous for every $f \in (H_m^1)'$.
    \item The operator $V_\psi^* : \L_{1/m}^\infty \to (H_m^1)'$ defined by
      \begin{equation*}
	\left< V_\psi^* F, g \right> = \left< F, V_\psi g \right>
      \end{equation*}
      is bounded.
  \end{enumerate}
\end{thm}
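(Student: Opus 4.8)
The plan is to treat the two parts separately, deriving everything from the admissibility condition \eqref{eq:kernel_int_cond} and the duality between $\L_m^1$ and $\L_{1/m}^\infty$. For part \textit{(i)} the starting observation is the reproducing identity $V_\psi(\rho(X)\psi) = R(X,\cdot)$, so that
\begin{equation*}
  \| \rho(X)\psi \|_{H_m^1} = \| R(X,\cdot) \|_{\L_m^1} = \iint \left| R(X,Y) \right| m(Y)\, \dd\xi(Y) \le C_\psi\, m(X),
\end{equation*}
the last step being exactly \eqref{eq:kernel_int_cond}. For $f \in (H_m^1)'$ the definition $V_\psi f(X) = \left< f, \rho(X)\psi \right>$ then gives $\left| V_\psi f(X) \right| \le \| f \|_{(H_m^1)'} \| \rho(X)\psi \|_{H_m^1} \le C_\psi \| f \|_{(H_m^1)'}\, m(X)$, and dividing by $m(X)$ and taking the supremum over $X$ yields $\| V_\psi f \|_{\L_{1/m}^\infty} \le C_\psi \| f \|_{(H_m^1)'}$, i.e.\ boundedness. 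Injectivity I would obtain from the fact, established just before the theorem, that $\{ \rho(X)\psi : X \in B \times \hat{B} \}$ is total in $H_m^1$: if $V_\psi f \equiv 0$, then $\left< f, \rho(X)\psi \right> = 0$ for all $X$, forcing $f = 0$ as a functional on $H_m^1$.

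Part \textit{(ii)} is the quickest. For $F \in \L_{1/m}^\infty$ and $g \in H_m^1$ the product $F\, \overline{V_\psi g}$ factors as $(F m^{-1})(m\, \overline{V_\psi g}) \in \L^\infty \cdot \L^1$, so Hölder's inequality gives
\begin{equation*}
  \bigl| \left< V_\psi^* F, g \right> \bigr| = \bigl| \left< F, V_\psi g \right> \bigr| \le \| F \|_{\L_{1/m}^\infty} \| V_\psi g \|_{\L_m^1} = \| F \|_{\L_{1/m}^\infty} \| g \|_{H_m^1}.
\end{equation*}
Hence $g \mapsto \left< F, V_\psi g \right>$ is a bounded functional on $H_m^1$, that is $V_\psi^* F \in (H_m^1)'$ with $\| V_\psi^* F \|_{(H_m^1)'} \le \| F \|_{\L_{1/m}^\infty}$, which is the asserted boundedness.

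The remaining claim, continuity of $V_\psi f$ for $f \in (H_m^1)'$, is the part I expect to require the most care, and I would reduce it to norm continuity of the map $X \mapsto \rho(X)\psi$ from $B \times \hat{B}$ into $H_m^1$; once this is shown, $V_\psi f(X) = \left< f, \rho(X)\psi \right>$ is continuous because $f$ is a continuous functional on $H_m^1$. To establish this along $X_j \to X$, I would write $\| \rho(X_j)\psi - \rho(X)\psi \|_{H_m^1} = \iint \left| R(X_j,Y) - R(X,Y) \right| m(Y)\, \dd\xi(Y)$ and split the integral over a large compact set $K$ and its complement. On $K$ the integrand tends to $0$ uniformly, using the joint continuity of $R$ (which follows from the continuity of $X \mapsto \rho(X)\psi$ into $\L^2(\mu)$ granted by lemma \ref{lem:schwartz_inv} \textit{(ii)}) and the local boundedness of $m$. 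The genuine obstacle is the tail on $B \times \hat{B} \setminus K$: for super-polynomial $m$ one cannot dominate $R(X_j,\cdot)$ by a single fixed integrable function, since the decay of $R$ is balanced against $m$ only through \eqref{eq:kernel_int_cond} itself. I would therefore prove a uniform integrability statement, namely that $\sup_j \iint_{K^c} \left| R(X_j,Y) \right| m(Y)\, \dd\xi(Y) \to 0$ as $K \nearrow B \times \hat{B}$, by keeping the $X_j$ inside a fixed compact neighbourhood, bounding $m(X_j)$ there, and invoking the admissibility estimate \eqref{eq:kernel_int_cond} uniformly; combined with a generalized (Pratt/Scheffé-type) dominated convergence argument, this yields $\iint \left| R(X_j,Y) - R(X,Y) \right| m(Y)\, \dd\xi(Y) \to 0$ and completes the proof.
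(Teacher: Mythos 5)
Parts \textit{(i)} and \textit{(ii)} of your proposal are correct, and your boundedness estimate in \textit{(i)} is actually more careful than the paper's own: the paper bounds $\left|\left<f,\rho(X)\psi\right>\right|$ by $\left\Vert f\right\Vert_{(H_m^1)'}\sup_X R(X,X)\,m(X)^{-1}$, which implicitly pairs the functional $f$ against the $\L^2$-norm of $\rho(X)\psi$ (note $R(X,X)^{1/2}=\left\Vert\rho(X)\psi\right\Vert_{\L^2(\mu)}$), a norm against which $f\in(H_m^1)'$ need not be continuous; the correct pairing is against $\left\Vert\rho(X)\psi\right\Vert_{H_m^1}=\left\Vert R(X,\cdot)\right\Vert_{\L_m^1}\le C_\psi\, m(X)$, which is precisely the admissibility bound \eqref{eq:kernel_int_cond} you invoke. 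Your injectivity argument (a continuous functional vanishing on the total set $\{\rho(X)\psi\}$ is zero) and your duality estimate in \textit{(ii)} coincide in substance with the paper's.

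The genuine gap is in your continuity argument -- where, to be fair, the comparison is one-sided, since the paper's proof is silent on continuity altogether (in the earlier polynomial-weight setting this is lemma \ref{lem:dist_continuous}; in the present general-weight setting no proof is offered). Your reduction to norm continuity of $X\mapsto\rho(X)\psi$ in $H_m^1$ is the right move, and you correctly isolate the tail integral as the obstruction, but your proposed resolution does not close it: keeping $X_j$ in a compact neighbourhood and invoking \eqref{eq:kernel_int_cond} yields only $\sup_j\iint\left|R(X_j,Y)\right|m(Y)\,\dd\xi(Y)\le C_\psi\sup_j m(X_j)<\infty$, a uniform $\L^1$ bound, which is strictly weaker than the uniform tail smallness $\sup_j\iint_{K^c}\left|R(X_j,Y)\right|m(Y)\,\dd\xi(Y)\to 0$ you need -- a uniformly bounded family in $\L^1$ need not be tight, and \eqref{eq:kernel_int_cond}, being a pointwise-in-$X$ statement about total mass, does not prevent mass of $R(X_j,\cdot)\,m$ from migrating outward as $j$ varies. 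Likewise, Pratt's lemma requires dominators converging in $\L^1$ and Scheff\'e requires convergence of the integrals $\iint\left|R(X_j,Y)\right|m(Y)\,\dd\xi(Y)$, neither of which is available. A working repair needs a genuinely stronger hypothesis on the kernel, e.g.\ a locally dominated condition in the spirit of \eqref{eq:framebounds_ass1} or of the oscillation hypothesis in theorem \ref{thm:atom_VM}: for $X_j\in\tau_{\mathcal{U}}X$ one then has the single integrable envelope $Y\mapsto\sup_{Z\in\mathcal{U}}\left|R(\tau_Z X,Y)\right|m(Y)$ dominating all $\left|R(X_j,\cdot)\right|m$, and ordinary dominated convergence finishes the argument. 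Alternatively, for polynomially bounded weights $m\le\kappa_s v_r$ the continuity is essentially free: lemma \ref{lem:schwartz_inv} \textit{(ii)} gives $\rho(X_j)\psi\to\rho(X)\psi$ in $\mathcal{S}(B)$, lemma \ref{lem:V_equiv_norms} upgrades this to the seminorms $\left\Vert\cdot\right\Vert_{s',r'}$, and multiplying by the integrable factor $\kappa_{-n-1}v_{-2n-1}$ yields convergence of $R(X_j,\cdot)\to R(X,\cdot)$ in $\L_m^1$, hence the desired $H_m^1$-continuity.
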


\begin{proof}
  \textit{(i)} Let $f \in (H_m^1)'$ with operator norm $\left\Vert f \right\Vert_{(H_m^1)'}$.
  We calculate
  \begin{align*}
    \left\Vert V_\psi f \right\Vert_{\L_{1/m}^\infty} &= \sup_X \left| \left< f, \rho(X) \psi \right> \right| m(X)^{-1} \\
    &\le \left\Vert f \right\Vert_{(H_m^1)'} \sup_X \left. R (X, X\right) m(X)^{-1} \\
    &\le C_\psi' \left\Vert f \right\Vert_{(H_m^1)'}.
  \end{align*}
  Furthermore, since the set of functions $\rho(X) \psi$ is total in $\L^2(\mu)$, injectivity of $V_\psi$ extends to $(H_m^1)'$ by \eqref{eq:dense_embed}.

  \textit{(ii)} For $F \in \L_{1/m}^\infty$ we have
  \begin{align*}
    \left\Vert V_\psi^* F \right\Vert_{(H_m^1)'} &= \sup_{\left\Vert g \right\Vert_{H_m^1} = 1} \left| \left< V_\psi^* F, g \right> \right| \\
    &= \sup_{\left\Vert g \right\Vert_{H_m^1} = 1} \left| \left< F, V_\psi g \right> \right| \\
    &\le \sup_{\left\Vert g \right\Vert_{H_m^1} = 1} \left\Vert F \right\Vert_{\L_{1/m}^\infty} \left\Vert V_\psi g \right\Vert_{\L_m^1}
    = \left\Vert F \right\Vert_{\L_{1/m}^\infty}.
  \end{align*}
\end{proof}

We may now define coorbit spaces just like before, replacing our reservoir of distributions, i.e.
\begin{equation*}
  M_m^p = \{ f \in (H_m^1)' : V_\psi f \in \L_m^p \},
\end{equation*}
with norm $\left\Vert f \right\Vert_{M_m^p} = \left\Vert V_\psi f \right\Vert_{\L_m^p}$, and
\begin{equation*}
  VM_m^p = \{ V_\psi f : f \in M_m^p \}.
\end{equation*}

\begin{thm}
  Let $\psi$ be an admissible window, then the following holds.
  \begin{enumerate}[(i)]
    \item For every $f \in M_m^p$, $X \in B\times\hat{B}$,
      \begin{equation*}
	V_\psi f (X) = \left< V_\psi f , R(X, \cdot) \right>.
      \end{equation*}
    \item The operator $V_\psi^* V_\psi$ is the identity on $M_m^p$.
    \item $M_m^p$ is independent of $\psi$; different admissible windows give equivalent norms.
  \end{enumerate}
\end{thm}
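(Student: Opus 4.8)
The plan is to reduce all three assertions to a single correspondence identity,
\[
  \langle f, g \rangle = \langle V_\psi f, V_\psi g \rangle, \qquad f \in (H_m^1)',\ g \in H_m^1,
\]
where the left-hand side is the duality of \eqref{eq:dense_embed} and the right-hand side is the absolutely convergent pairing of $V_\psi f \in \L_{1/m}^\infty$ with $V_\psi g \in \L_m^1$ (finite by Hölder). On $\L^2(\mu) \times \L^2(\mu)$ this is precisely the orthogonality relation \eqref{eq:orth_rel} with $\psi_1 = \psi_2 = \psi$ and $\langle \psi, \psi \rangle = 1$; the task is to propagate it through the dense, continuous chain $H_m^1 \hookrightarrow \L^2(B,\mu) \hookrightarrow (H_m^1)'$. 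Granting this, part \textit{(i)} is immediate: taking $g = \rho(X)\psi$, which lies in $H_m^1$ by admissibility \eqref{eq:kernel_int_cond}, and recalling $V_\psi(\rho(X)\psi) = R(X,\cdot)$, one reads off $V_\psi f(X) = \langle f, \rho(X)\psi \rangle = \langle V_\psi f, R(X,\cdot) \rangle$. Part \textit{(ii)} is its operator form: the identity says $\langle V_\psi^* V_\psi f, g \rangle = \langle V_\psi f, V_\psi g \rangle = \langle f, g \rangle$ for all $g \in H_m^1$, where $V_\psi^* V_\psi f$ is understood through the defining relation $\langle V_\psi^* F, g \rangle = \langle F, V_\psi g \rangle$; this pairing is continuous on $H_m^1$ because $\lvert \langle V_\psi f, V_\psi g \rangle \rvert \le \lVert V_\psi f \rVert_{\L_{1/m}^\infty} \lVert g \rVert_{H_m^1}$, so $V_\psi^* V_\psi f = f$ in $(H_m^1)'$, and since this element is $f \in M_m^p$ nothing leaves the space.

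The heart of the matter is the interchange of the continuous functional $f$ with a vector-valued integral that establishes the correspondence identity. For $g \in H_m^1$ I would first show that the Bochner integral $h = \iint V_\psi g(Y)\, \rho(Y)\psi \, \dd\xi(Y)$ converges in $H_m^1$: admissibility bounds $\lVert \rho(Y)\psi \rVert_{H_m^1} = \lVert R(Y,\cdot) \rVert_{\L_m^1} \le C_\psi\, m(Y)$, whence $\iint \lvert V_\psi g(Y) \rvert\, \lVert \rho(Y)\psi \rVert_{H_m^1}\, \dd\xi(Y) \le C_\psi \lVert V_\psi g \rVert_{\L_m^1} = C_\psi \lVert g \rVert_{H_m^1} < \infty$, the integrand being strongly measurable by Lemma \ref{lem:schwartz_inv} \textit{(ii)}. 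Because $H_m^1 \hookrightarrow \L^2(\mu)$ continuously, $h$ also equals the $\L^2$-convergent integral $V_\psi^* V_\psi g$, which is $g$ by the inversion formula on $\L^2(B,\mu)$ (Theorem \ref{thm:orth_rel}). Applying $f \in (H_m^1)'$ and pulling it inside the convergent integral then gives $\langle f, g \rangle = \langle f, h \rangle = \iint \overline{V_\psi g(Y)}\, \langle f, \rho(Y)\psi \rangle\, \dd\xi(Y) = \langle V_\psi f, V_\psi g \rangle$, the conjugate bar arising from the conjugate-linearity of the pairing in its window slot.

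For part \textit{(iii)}, let $\gamma$ be a second admissible window. Using the correspondence identity with $g = \rho(X)\gamma$ yields the change-of-window relation
\[
  V_\gamma f(X) = \langle V_\psi f, V_\psi(\rho(X)\gamma) \rangle = \iint V_\psi f(Y)\, \overline{R_{\gamma,\psi}(X,Y)}\, \dd\xi(Y),
\]
with cross-kernel $R_{\gamma,\psi}(X,Y) = \langle \rho(X)\gamma, \rho(Y)\psi \rangle$, exhibiting $V_\gamma f$ as the image of $V_\psi f$ under an integral operator. A Schur/weighted-Young bound (theorem \ref{thm:young}) then gives $\lVert V_\gamma f \rVert_{\L_m^p} \le C \lVert V_\psi f \rVert_{\L_m^p}$, and swapping $\psi$ with $\gamma$ yields the reverse inequality, so the norms are equivalent and $M_m^p$ is independent of the admissible window. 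The main obstacle I anticipate lies here: already the well-definedness of the relation needs $\rho(X)\gamma \in H_m^1$, i.e.\ $R_{\gamma,\psi}(X,\cdot) \in \L_m^1$, and the Young estimate needs the mixed conditions $\sup_X \iint \lvert R_{\gamma,\psi}(X,Y) \rvert \frac{m(X)}{m(Y)} \dd\xi(Y) < \infty$ together with its transpose. For merely polynomial $m$ these follow as in Lemma \ref{lem:kernel_int}, but for general weights I would have to deduce them from the single-window hypothesis \eqref{eq:kernel_int_cond}; the route is to factor the cross-kernel through $R$ by the reproducing formula of part \textit{(i)} and to control the resulting double integral using \eqref{eq:kernel_int_cond} for both windows together with the submultiplicativity \eqref{eq:kappa_perm}, \eqref{eq:v_perm} of $m$.
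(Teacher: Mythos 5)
Your proposal is correct, and for parts \textit{(i)} and \textit{(ii)} it takes a genuinely different route from the paper. The paper argues abstractly: for \textit{(i)} it observes that both $X \mapsto V_\psi f(X)$ and $X \mapsto \langle V_\psi f, R(X,\cdot)\rangle$ define continuous operations on $(H_m^1)'$ that agree on $\L^2(\mu)$, and lets equality propagate through the dense embedding \eqref{eq:dense_embed}; for \textit{(ii)} it computes $V_\psi(V_\psi^*F)(X) = \langle F, R(X,\cdot)\rangle$, sets $F = V_\psi f$, and invokes the injectivity of $V_\psi$ on $(H_m^1)'$. You instead prove the fundamental identity $\langle f, g\rangle = \langle V_\psi f, V_\psi g\rangle$ directly, by showing the inversion integral for $g \in H_m^1$ converges as a Bochner integral in the $H_m^1$-norm --- your bound $\Vert \rho(Y)\psi \Vert_{H_m^1} \le C_\psi\, m(Y)$ is exactly what admissibility \eqref{eq:kernel_int_cond} delivers --- and then pulling $f$ inside. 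This is the classical Feichtinger--Gr\"ochenig argument transplanted to the kernel setting; it yields \textit{(i)} and \textit{(ii)} in one stroke, needs neither injectivity of $V_\psi$ nor the (weak-$*$) density subtlety the paper leaves implicit, and is if anything more self-contained than the paper's sketch. One technical caveat: strong measurability of $Y \mapsto \rho(Y)\psi$ as an $H_m^1$-valued map does not follow from Lemma \ref{lem:schwartz_inv} \textit{(ii)} alone unless $\mathcal{S}(B) \hookrightarrow H_m^1$ continuously, which holds for weights of polynomial type but not for arbitrary $m$; for general weights you should use the weak (Pettis) form of the integral, which suffices since you only ever pair it against $f$.

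For \textit{(iii)}, the paper offers only a one-line deferral to the proof of Theorem \ref{thm:coorb_invers} \textit{(iii)}; your explicit change-of-window formula with the cross-kernel $R_{\gamma,\psi}$ and the weighted Young estimate is precisely what that deferral unpacks to, and you have correctly identified the crux: the mixed integrability of $R_{\gamma,\psi}$ and the membership $\rho(X)\gamma \in H_m^1$. For polynomial $m$ these follow from Lemma \ref{lem:kernel_int}, since both windows are Schwartz and the cross-kernel has fast decay. For general weights, however, your proposed repair does not close: factoring $R_{\gamma,\psi}$ through $R$ via the reproducing formula only \emph{reproduces} $R_{\gamma,\psi}$ (the cross-kernel reappears inside the double integral), so single-window admissibility of $\psi$ and $\gamma$ cannot bound it --- in the group case the analogous cross condition follows from a convolution algebra structure that is absent here. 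The cross-kernel condition is thus an additional hypothesis on the pair $(\psi,\gamma)$, as in the Dahlke--Steidl--Teschke framework; note that the paper's own terse proof of \textit{(iii)} tacitly assumes it as well, so on this point your proof is no less complete than the original.
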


\begin{proof}
  Both sides each define a continuous linear operator on $(H_m^1)'$, and both coincide on $\L^2(\mu)$.
  By the dense embedding $\L^2(\mu) \hookrightarrow (H_m^1)'$, they also coincide on $(H_m^1)'$.

  For the second assertion we calculate for $h \in H_m^1$ and $F \in \L_{1/m}^\infty$,
  \begin{align*}
    \left< V_\psi^* F, h \right> &= \left< F, V_\psi h \right>
    = \iint F(X) \left< \rho(X) \psi, h \right> \dd\xi(X) \\
    &= \Bigl< \iint F(X) \rho(X) \psi \, \dd\xi(X), h \Bigr>,
  \end{align*}
  so $V_\psi^* F = \iint F(X) \rho(X) \psi\, \dd\xi(X)$.
  Moreover,
  \begin{align*}
    V_\psi (V_\psi^* F) (X) = \left< V_\psi^* F, \rho(X) \psi \right>
    = \left< F, V_\psi \bigl( \rho(X) \psi \bigr) \right>
    = \left< F, R(X, \cdot) \right>.
  \end{align*}
  Setting $F = V_\psi f$, the above equation and the injectivity of $V_\psi$ give $V_\psi^* V_\psi f = f$.

  The third assertion is proved in a similar fashion as theorem \ref{thm:coorb_invers} \textit{(iii)}.
\end{proof}

\subsection{Translation Invariance}

A key feature of Euclidean modulation spaces is their invariance under time-frequency shifts (and, under certain conditions on the weight, under the Fourier transform).
More general, in the setting of group representations, coorbit spaces are defined in terms of a Banach function space, still required to be invariant under group action, see \cite{fei_groe1}.

While the question of such a translation invariance is motivated mainly in the group theoretic setting, it may also arise in the present case.
Specifically, consider a generic point $X = (w,b) \in B \times \hat{B}$. 
Then $X$ induces a translation $\tau_X$,
\begin{equation*}
  \tau_X (z, a) = (\varphi_w(z), t_b(a)),
\end{equation*}
where $t_b$ consists of a scalar translation
\begin{equation*}
  a = (x, \zeta) \mapsto (y - x, \zeta), \quad b = (y,\zeta')
\end{equation*}
and a possible unitary map in the second variable (which we omit).

\begin{lem}
  The spaces $VM_m^p$ are invariant under the translations $\tau_X$ ($X \in B \times \hat{B}$).
\end{lem}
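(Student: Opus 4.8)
The plan is to deduce the statement from a single covariance identity for the voice transform, after which only the mapping properties of the weights already established are needed. Write a generic point as $Y = (z,(\lambda,\zeta))$ and $X = (w,(\lambda_b,\zeta_b))$, and take the boundary component of $\tau_X$ to be the (in the text omitted) map induced by $\varphi_w$ on $S$. The starting observation, read off directly from \eqref{eq:moeb_prod} specialised to a boundary point, is the transformation rule for the plane waves,
\begin{equation*}
  P_{\lambda,\zeta}(\varphi_w(u)) = \left| 1 - \left< w, \varphi_w(\zeta) \right> \right|^{\,n+i\lambda} P_{\lambda,\varphi_w(\zeta)}(u).
\end{equation*}
Substituting $u = \varphi_w(u')$ in the defining integral of $V_\psi f(\tau_X Y)$, and using the $\Aut(B)$-invariance of $\mu$, the involution property $\varphi_w \circ \varphi_w = \id$, and the radiality of $\psi$ (which, since $\varphi_{\varphi_w(z)} \circ \varphi_w$ sends $o$ to $o$ after composing with $\varphi_z^{-1}$, absorbs the residual factor in $\U(n)$), I expect to obtain
\begin{equation*}
  V_\psi f(\tau_X Y) = \left| 1 - \left< w, \zeta \right> \right|^{\,n - i(\lambda_b - \lambda)}\, V_\psi(f \circ \varphi_w)\bigl( z, (\lambda_b - \lambda, \zeta) \bigr).
\end{equation*}
Thus $V_\psi f \circ \tau_X$ equals, up to a unimodular phase and the positive factor $\left| 1 - \left< w, \zeta \right> \right|^n$, the transform of $f \circ \varphi_w$ evaluated after the spectral reflection $\sigma_b \colon \lambda \mapsto \lambda_b - \lambda$.

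Granting this identity, the norm estimate is routine and splits into three parts. The space translation $f \mapsto f \circ \varphi_w$ preserves $M_m^p$ (this is the case $\lambda_b = 0$), with the $\L_m^p$-bound following from invariance of $\mu$ and the submultiplicativity \eqref{eq:kappa_perm} of $\kappa$. The prefactor $\left| 1 - \left< w, \zeta \right> \right|^n$ is bounded on $\overline{B} \times S$ for fixed $w$, and the phase is unimodular, so neither affects integrability. Finally the spectral reflection $\sigma_b$ leaves the $\L_m^p$-norm comparable: the density $\tfrac{1}{2}\left| \hc(\lambda) \right|^{-2}$ of $\nu$ is even and, by \eqref{eq:hc_poly}, of polynomial growth, so the Radon--Nikodym derivative of $\nu \circ \sigma_b^{-1}$ with respect to $\nu$ is bounded by a constant times $v_{2n-1}(\lambda_b)$, while $v(\lambda_b - \lambda) \le v(\lambda_b) v(\lambda)$ by \eqref{eq:v_perm}. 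Combining these yields $\left\Vert V_\psi f \circ \tau_X \right\Vert_{\L_m^p} \le C_X \left\Vert V_\psi f \right\Vert_{\L_m^p}$, and since $\tau_X$ is involutive the reverse bound is identical, giving equivalent norms.

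The step I expect to be delicate is not the norm bound but the verification that $V_\psi f \circ \tau_X$ is again a \emph{genuine} element of $VM_m^p$, i.e.\ is $V_\psi h$ for some $h \in M_m^p$, rather than merely a function of controlled size; this is subtle precisely because the modulations $f \mapsto P_b \cdot f$ do not form a group, so the spectral reflection $\sigma_b$ does not arise from applying $\rho$. Here I would exploit the symmetry $\overline{P_{\lambda,\zeta}} = P_{-\lambda,\zeta}$, which turns the reflected transform into a conjugate: for a real-valued radial window one has $\overline{V_\psi g(z,(-\mu,\zeta))} = V_\psi(\overline{g})(z,(\mu,\zeta))$, and both complex conjugation and $g \mapsto \overline{g}$ preserve $M_m^p$ and the $\L_m^p$-norm. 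Alternatively, and perhaps more cleanly, I would argue through the reproducing kernel: membership in $VM_m^p$ is characterised by the relation $V_\psi f(X) = \left< V_\psi f, R(X, \cdot) \right>$, and the covariance identity above shows that this relation is stable under precomposition with $\tau_X$ once the multiplier and the reflection are carried through the kernel. Either route closes the argument; the only point genuinely requiring care is the bookkeeping that matches the positive multiplier $\left| 1 - \left< w, \zeta \right> \right|^n$ and the reflection against the non-invariant spectral density of $\nu$.
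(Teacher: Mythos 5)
Your structural reduction is sound in outline: the covariance identity you posit does hold, in the form $V_\psi f(\tau_X Y) = P_{-(\lambda_b - \lambda),\zeta}(w)\, V_\psi (f \circ \varphi_w)\bigl(z, (\lambda_b - \lambda, \varphi_w(\zeta))\bigr)$, obtained from the addition theorem \eqref{eq:add_thm} together with $\varphi_{\varphi_w(z)} \circ \varphi_w = u \circ \varphi_z$ ($u$ unitary) and radiality of $\psi$; your multiplier is slightly off (its modulus is $(1-|w|^2)^{n/2} \left| 1 - \left< w, \zeta \right> \right|^{-n}$, cf.\ \eqref{eq:moeb_prod}, not $\left| 1 - \left< w, \zeta \right> \right|^{n}$, but either is bounded above and below for fixed $w$, so this is harmless, as is your replacing the paper's unitary boundary map by $\varphi_w|_S$, whose Jacobian $P_{0,\zeta}(w)^2$ from \eqref{eq:poiss_inv} is likewise bounded for fixed $w$). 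The genuine gap is the treatment of the spectral reflection $\sigma_b$. The Radon--Nikodym derivative of $\nu \circ \sigma_b^{-1}$ with respect to $\nu$ is $\left| \hc(\lambda_b - \lambda) \right|^{-2} / \left| \hc(\lambda) \right|^{-2}$, and since $\Gamma(i\lambda)$ in \eqref{eq:def_hc} has a simple pole at $\lambda = 0$, the density $\left| \hc(\lambda) \right|^{-2}$ has a zero of order two there; hence for $\lambda_b \ne 0$ this quotient blows up like $\lambda^{-2}$ as $\lambda \to 0$ and is \emph{not} bounded by $C\, v_{2n-1}(\lambda_b)$. Consequently your claimed bound $\left\Vert F \circ \tau_X \right\Vert_{\L_m^p} \le C_X \left\Vert F \right\Vert_{\L_m^p}$ fails for general $F \in \L_m^p$ --- this is exactly the non-invariance of $\L_m^p$ that the paper points out in the remark immediately following the lemma --- and your ``involutivity gives equivalent norms'' conclusion collapses with it.

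The paper avoids this obstruction by a one-sided domination instead of a change of variables in $\nu$: it first bounds $\nu \le c_0 \tilde{\nu}$ with $\dd\tilde{\nu} = v_{2n-1}(\lambda)\, \dd\lambda\, \dd\sigma$, whose density (unlike $\left| \hc \right|^{-2}$) is genuinely submultiplicative, then translates in the $\tilde{\nu}$-norm to get $\left\Vert (V_\psi f) \circ \tau_X \right\Vert_{\L_m^p(\mu \otimes \nu)} \le c_0\, v_{2n-1}(b)\, m(X) \left\Vert V_\psi f \cdot m \right\Vert_{\L^p(\mu \otimes \tilde{\nu})}$, and finally invokes continuity of $V_\psi f$ to ensure finiteness of the dominating norm --- i.e., the argument is special to elements of $VM_m^p$ (continuous transforms of distributions, controlled near the singular hypersurface $\lambda = 0$) and yields only a one-sided estimate against the auxiliary $\tilde{\nu}$-norm, not an $\L_m^p(\nu)$-norm equivalence. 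Note also that your closing worry --- whether $F \circ \tau_X$ is a \emph{genuine} element of $VM_m^p$, i.e.\ of the form $V_\psi h$ --- is not addressed by the paper at all: its proof establishes only finiteness of the $\L_m^p$-norm of the translated transform, which is the sense in which the lemma is meant. So that portion of your proposal goes beyond the paper's argument, but neither the conjugation trick nor the reproducing-kernel route can substitute for the broken quantitative step: any repair must, like the paper, pass through a measure with submultiplicative density and use regularity of $V_\psi f$ near $\lambda = 0$ rather than a bounded Radon--Nikodym derivative.
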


\begin{proof}
  Choose $c_0 > 0$ such that
  \begin{equation*}
    \left| \hc(\lambda) \right|^{-2} \le c_0\, v_{2n-1}(\lambda), \quad \lambda \in \R
  \end{equation*}
  and set $\dd\tilde{\nu}(\lambda,\zeta) = v_{2n-1}(\lambda) \dd\lambda\, \dd\sigma(\zeta)$.
  Let $f \in M_m^p$, we then obtain for $X = (w,b)$,
  \begin{align*}
    \left\Vert (V_\psi f) \circ \tau_X \right\Vert_{\L_m^p}
    &= \left\Vert (V_\psi f) \circ \tau_X \cdot m \right\Vert_{\L^p(\mu\otimes\nu)} \\
    &\le c_0 \left\Vert (V_\psi f) \circ \tau_X \cdot m \right\Vert_{\L^p(\mu\otimes\tilde{\nu})} \\
    &\le c_0\, v_{2n-1}(b)\, m(X) \left\Vert V_\psi f \cdot m \right\Vert_{\L^p(\mu\otimes\tilde{\nu})}
  \end{align*}
  by invariance of $\mu$ and submultiplicativity of $m$ and $v_{2n-1}$.
  Since $V_\psi f$ is continuous, the last norm is finite.
\end{proof}

Note however, that the weighted spaces $\L_m^p$ are \emph{not} invariant under these translations due to the zero $\left| \hc(0) \right|^{-2} = 0$, which allows for singularities (up to order two) along the hypersurface $\lambda = 0$.
From this point of view, it seems more practical to replace
\begin{equation*}
  \dd\nu(\lambda,\zeta) = \left| \hc(\lambda) \right|^{-2} \dd\sigma(\zeta)\, \dd\lambda \quad \text{by} \quad \dd\tilde{\nu}(\lambda,\zeta) = v_{2n-1}(\lambda)\, \dd\sigma(\zeta)\, \dd\lambda
\end{equation*}
(omitting constants), and to define coorbit spaces in terms of $\L_m^p(\mu \otimes \tilde{\nu})$ instead of $\L_m^p(\mu \otimes \nu)$, i.e.
\begin{equation*}
  \left\Vert f \right\Vert_{\tilde{M}_m^p} = \left\Vert V_\psi f \cdot m \right\Vert_{\L^p(\mu \otimes \tilde{\nu})}.
\end{equation*}
By submultiplicativity of $v_{2n-1}$, the spaces $\L_m^p(\mu\otimes\tilde{\nu})$ are translation invariant, and $\tilde{M}_m^p \subset M_m^p$ holds, since $\L^p(\mu \otimes \tilde{\nu}) \subset \L^p(\mu \otimes \nu)$.
Furthermore, it is easily seen that all theorems in the preceding section still hold for $\tilde{M}_m^p$, in particular the inversion formula and the Banach space property.

\section{Frame theory}
\label{sct:frames}

Having established the continuous setting, we make extensive use of the reproducing kernel and derive atomic decompositions on the spaces $VM_m^p$.
By inversion of the voice transform, these decompositions give rise to Banach frames on the coorbit spaces $M_m^p$.

This section is mainly inspired by the work of Dahlke, Steidl, and Teschke, \cite{da_st_te1}, \cite{da_st_te2}.

\subsection{Partitions of Unity}

The choice of suitable partitions of unity on phase space $B \times \hat{B}$ is a starting point for the discretization.
The procedure for voice/wavelet transforms on a homogeneous space, arising from group representations, is to establish partitions of unity on group level first (see also \cite{fei_char_hom}), then transport them to the homogeneous space through composition with a section.
While this works in the case of the ball, the result may be obtained directly, for example via the following construction.

Let $U_o \subset B$ be a relatively compact neighborhood of the origin with non-empty interior. Then there exists a countable collection $(z_j)_{j \in J}$ of points in $B$, which is \emph{well spread} in the following sense.
\begin{enumerate}
  \item The sets $U_j = \varphi_{z_j}(U_o)$ cover $B$.
  \item There exists a partition $J = J_1 \cup \dots \cup J_{r_0}$ so that $U_i \cap U_j = \varnothing$ whenever $i,j$ are in the same index set $J_r$.
\end{enumerate}
Thus, $B$ decomposes as
\begin{equation}
  B = \bigcup_{j \in J} U_j = \bigcup_{r = 1}^{r_0} \biguplus_{j \in J_r} U_j,
  \label{eq:cov_B}
\end{equation}
where $\biguplus$ denotes disjoint union.
By construction, $\mu(U_j) = \mu(U_o)$ holds for all $j \in J$.

In the case of $\hat{B}$, we aim for a covering $(V_k)_{k \in K}$ with similar properties.
First, since $S$ is a finite union of $\sigma$-invariant subsets, it remains only to cover the real line.
Let $I_0 \subset \R$ be a relatively compact interval centered around the origin, and let $I_y = t_y^v(I_0)$ for $y \in \R$ denote its image under the weighted translation
\begin{equation*}
  t_y^v: \R \to \R, \quad x \mapsto y + \frac{x}{v_{2n-1}(y)}.
\end{equation*}
Using \eqref{eq:hc_poly}, it is easily verified that there exist positive constants $c,d$ such that
\begin{equation*}
  c \le \nu(I_y \times S) \le d
\end{equation*}
holds uniformly in $y$.
A cover
\begin{equation*}
  \R = \bigcup_{k \in K} t_{x_k}^v(I_0) = \bigcup_{s = 1}^{s_0} \biguplus_{k \in K_s} t_{x_k}^v(I_0)
\end{equation*}
can now be obtained from a suitable choice of a countable collection $(x_k)_{k \in K}$.
Extending this to $S$ (either trivially, or) through a covering consisting of $\sigma$-invariant subsets $S_k$, we obtain
\begin{equation}
  \hat{B} = \R \times S = \bigcup_{k \in K} V_k = \bigcup_{s = 1}^{s_0} \biguplus_{k \in K_s} V_k
  \label{eq:cov_R}
\end{equation}
where $V_k = t_{x_k}^v(I_0) \times S_k$.
Setting $x_0 = 0$, we have $V_0 = I_0 \times S_0$, and by the above construction, $\nu(V_k)$ is uniformly bounded from above and from below by a constant multiple of $\nu(V_0)$.

\bigskip

Given a covering of $B \times \hat{B}$ with sets $\mathcal{U}_{jk} = U_j \times V_k$ as in \eqref{eq:cov_B} and \eqref{eq:cov_R}, a family $\phi = (\phi_{jk})_{j \in J, k \in K}$ of continuous functions on $B \times \hat{B}$ is referred to as a \emph{bounded uniform partition of unity} subordinate to $(\mathcal{U}_{jk})$, if it satisfies
\begin{equation*}
  \supp \phi_{jk} \subset \mathcal{U}_{jk}, \qquad 0 \le \phi_{jk}(w,b) \le 1, \qquad \text{and} \qquad \sum_{j,k} \phi_{jk}(w,b) = 1
  \label{eq:covering_tU}
\end{equation*}
for all $j \in J$, $k \in K$ and $(w,b) \in B \times \hat{B}$.

\subsection{Approximation Operators}

We recall that every pair $X = (w, b)$ induces a joint translation
\begin{equation*}
  \tau_X : (z,a) \mapsto \tau_X (z,a) = (\varphi_w(z), t_b(a)),
  \label{}
\end{equation*}
where $t_b$ consists of a scalar translation $x \mapsto y - x$ in the first, and a unitary map in the second variable.

Let $\mathcal{U} \subset B \times \hat{B}$ be relatively compact with non-empty interior.
For $X, Y \in B \times \hat{B}$ let
\begin{equation*}
  \begin{split}
    \osc_\mathcal{U} (X,Y) &= \sup_{Z \in \mathcal{U}} \left| R (\tau_Z X, Y) - R(X, Y) \right| \\
    &= \sup_{Z \in \mathcal{U}} \left| \left< \rho(\tau_Z X) \psi - \rho(X) \psi, \rho(Y) \psi \right> \right|
  \end{split}
\end{equation*}
denote the \emph{oscillation} of $X$ and $Y$ with respect to $\mathcal{U}$.
The supremum is implicitly to be taken over all unitary maps acting on the third variable of $X$.
If $\mathcal{U}$ is reasonably symmetric, this supremum is the same as
\begin{equation*}
  \osc_\mathcal{U}(X, Y) = \sup_{Z \in \mathcal{U}} \left| \left< \rho(\tau_X Z) \psi - \rho(X) \psi, \rho(Y) \psi \right> \right|.
\end{equation*}

Let $\phi = (\phi_{jk})_{j \in J, k \in K}$ be a bounded uniform partition of unity, subordinate to a cover $(\mathcal{U}_{jk})_{j \in J, k \in K}$ with the aforementioned qualities.
We set $\mathcal{U} = U_o \times V_0$ and refer to this as the initial set of the partition.
Furthermore, let $(X_{jk})_{j,k}$ be chosen so that $\mathcal{U}_{jk} \subset \tau_{X_{jk}} \mathcal{U}$.
Then $\phi$ induces linear operators $T_\phi$ and $S_\phi$ defined by
\begin{align}
  T_\phi F(X) &= \sum_{j,k} \left< F, \phi_{jk} \right> R(X_{jk}, X) \nonumber \\
  &= \sum_{j,k} \iint_{B \times \hat{B}} F(Y) \phi_{jk}(Y) R(X_{jk}, X) \, \dd\xi(Y), \\
  S_\phi F(X) &= \sum_{j,k} \left< \phi_{jk}, R(X, \cdot) \right> F(X_{jk}) \nonumber \\
  &= \sum_{j,k} \iint_{B \times \hat{B}} \phi_{jk}(Y) R(Y, X) F(X_{jk}) \, \dd\xi(Y),
  \label{eq:approx_op_def}
\end{align}
whenever (unconditionally) convergent, meaning that the series are taken as limits of sums over finite subsets of $J \times K$, ordered by inclusion.

The practicality of these operators depends on the choice of the initial set $\mathcal{U}$, expressed by $\osc_\mathcal{U}$.
Let $C_\psi$ be defined as in \eqref{eq:kernel_int_cond}.

\begin{thm}
  \label{thm:atom_VM}
  Suppose there exists $\gamma < 1$ such that
  \begin{equation*}
    \iint_{B \times \hat{B}} \osc_\mathcal{U}(X,Y)\, \frac{m(X)}{m(Y)} \, \dd\xi(X)\quad \text{and} \quad  \iint_{B \times \hat{B}} \osc_\mathcal{U}(X,Y)\, \frac{m(X)}{m(Y)} \, \dd\xi(Y)
  \end{equation*}
  are uniformly bounded by $\gamma/C_\psi$.
  Then the operators $T_\phi$ and $S_\phi$ are bounded and invertible on every space $VM_m^p$.
\end{thm}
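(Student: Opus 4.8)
The plan is to compare each of $T_\phi$ and $S_\phi$ with the identity on $VM_m^p$ and then invert via a Neumann series. The identity is realized by the reproducing kernel: since $V_\psi f(X)=\langle V_\psi f,R(X,\cdot)\rangle$ and $R(X,Y)=\overline{R(Y,X)}$, every $F=V_\psi f\in VM_m^p$ satisfies
$$F(X)=\iint F(Y)\,R(Y,X)\,\dd\xi(Y).$$
Inserting the partition of unity $\sum_{j,k}\phi_{jk}\equiv 1$ into this formula and subtracting the defining sums for $T_\phi F$, respectively $S_\phi F$, I would exhibit $\mathrm{Id}-T_\phi$ and $\mathrm{Id}-S_\phi$ as integral operators whose kernels are dominated by the oscillation $\osc_{\mathcal U}$. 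The weighted Young inequality (theorem \ref{thm:young}) then converts the hypothesis into operator-norm bounds, and the Neumann series finishes the argument.

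For $T_\phi$ one gets, using $\sum_{j,k}\phi_{jk}(Y)R(Y,X)=R(Y,X)$,
$$(\mathrm{Id}-T_\phi)F(X)=\sum_{j,k}\iint \phi_{jk}(Y)\left(R(Y,X)-R(X_{jk},X)\right)F(Y)\,\dd\xi(Y).$$
Since $\supp\phi_{jk}\subset\mathcal U_{jk}\subset\tau_{X_{jk}}\mathcal U$, each $Y$ in the support is $\tau_{X_{jk}}Z$ with $Z\in\mathcal U$, whence $|R(Y,X)-R(X_{jk},X)|\le\osc_{\mathcal U}(X_{jk},X)$ by the second form of the oscillation. The kernel of $\mathrm{Id}-T_\phi$ is thus dominated by $\sum_{j,k}\phi_{jk}(Y)\,\osc_{\mathcal U}(X_{jk},X)$, and theorem \ref{thm:young} bounds the $\L_m^p$-operator norm by the two Schur integrals of $\osc_{\mathcal U}$ weighted by $m(X)/m(Y)$, which the hypothesis controls by $\gamma/C_\psi<1$.

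For $S_\phi$ the coefficients are the point samples $F(X_{jk})$, and subtraction yields
$$(\mathrm{Id}-S_\phi)F(X)=\sum_{j,k}\iint \phi_{jk}(Y)\,R(Y,X)\left(F(Y)-F(X_{jk})\right)\dd\xi(Y).$$
Here I would re-expand the increment through the reproducing kernel one more time,
$$F(Y)-F(X_{jk})=\iint F(W)\left(R(W,Y)-R(W,X_{jk})\right)\dd\xi(W),$$
and bound $|R(W,Y)-R(W,X_{jk})|\le\osc_{\mathcal U}(X_{jk},W)$ as before. The resulting kernel is a \emph{composition} of $R$ with the oscillation kernel; applying theorem \ref{thm:young} twice produces exactly one factor of the admissibility bound $C_\psi$ from \eqref{eq:kernel_int_cond} multiplied by an oscillation integral $\le\gamma/C_\psi$. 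The cancellation $C_\psi\cdot(\gamma/C_\psi)=\gamma<1$ is what dictates the $1/C_\psi$ in the hypothesis and gives $\|\mathrm{Id}-S_\phi\|\le\gamma$. Boundedness of $T_\phi$ and $S_\phi$ themselves then follows from these kernel estimates together with the boundedness of the identity, and since both $\|\mathrm{Id}-T_\phi\|$ and $\|\mathrm{Id}-S_\phi\|$ are $<1$ on the closed subspace $VM_m^p$, the Neumann series inverts both operators.

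I expect the main obstacle to be the discretization bookkeeping, i.e.\ passing from the sums $\sum_{j,k}\osc_{\mathcal U}(X_{jk},X)\,m(X)/m(X_{jk})$ to the continuous integrals in the statement. This step needs the uniform comparabilities $\mu(U_j)\approx\mu(U_o)$ and $\nu(V_k)\approx\nu(V_0)$, the finite overlap encoded by the decompositions $J=J_1\cup\dots\cup J_{r_0}$ and $K=K_1\cup\dots\cup K_{s_0}$, and the submultiplicativity of $m$ to replace $m(X_{jk})$ by $m(Y)$ on each $\mathcal U_{jk}$ (moving the oscillation centre costs only a slight enlargement of $\mathcal U$). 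These contribute fixed geometric constants that must be absorbed; they are harmless in principle because $\osc_{\mathcal U}\to 0$ as the initial set $\mathcal U$ is shrunk, by continuity and decay of $R$, so a sufficiently fine well-spread family secures the hypothesis $\gamma<1$.
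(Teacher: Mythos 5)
Your proposal is correct and follows essentially the same route as the paper: both realize $\id - T_\phi$ and $\id - S_\phi$ as integral operators dominated by the oscillation kernel, apply the weighted Young inequality (for $S_\phi$ twice, with exactly the cancellation $C_\psi \cdot (\gamma/C_\psi) = \gamma$ you identify), and invert by a Neumann series. The only difference is that the ``discretization bookkeeping'' you flag as the main obstacle is dispensed with in the paper in one line: since $Y = \tau_{X_{jk}} X'$ with $X' \in \mathcal{U}$, the symmetric form of the oscillation gives $\left| R(Y,X) - R(X_{jk},X) \right| \le \osc_\mathcal{U}(Y,X)$ directly in the \emph{continuous} variable $Y$, so the partition of unity collapses via $\sum_{j,k}\phi_{jk}(Y) = 1$ and the hypothesis applies verbatim, with no sum-to-integral transfer or enlargement of $\mathcal{U}$ needed.
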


\begin{proof}
  Since $R$ is a reproducing kernel on $VM_m^p$, we obtain the decomposition
  \begin{align*}
    F(X) = \left< F, R(X, \cdot) \right> &= \iint F(Y) R(Y,X) \, \dd\xi(X) \\
    &= \sum_{j,k \in J \times K} \iint F(Y) \phi_{jk}(Y) R(Y,X) \, \dd\xi(Y)
  \end{align*}
  so that
  \begin{align*}
    \left| F(X) - T_\phi F(X) \right|
    &\le \sum_{j,k} \iint \left| F(Y) \right| \phi_{jk}(Y) \left| R(Y,X) - R(X_{jk}, X) \right| \dd\xi(Y).
  \end{align*}
  Now since $\supp \phi_{jk} \subset \mathcal{U}_{jk}$, we may assume $Y \in \mathcal{U}_{jk} \subset \tau_{X_{jk}} \mathcal{U}$, hence $Y = \tau_{X_{jk}} X'$ with $X' \in \mathcal{U}$, resulting in the estimate
  \begin{equation*}
    \left| R(Y,X) - R(X_{jk}, X) \right|
    \le \osc_\mathcal{U} (Y,X).
  \end{equation*}
  Using the weighted Young inequality and the integrability condition for $\osc_\mathcal{U}$,
  \begin{equation*}
    \left\Vert (\id - T_\phi)F \right\Vert_{\L_m^p} \le \gamma \left\Vert F \right\Vert_{\L_m^p}
  \end{equation*}
  follows, thus yielding $\left\Vert \id - T_\phi \right\Vert < 1$.
  Writing $T_\phi = \id -(\id - T_\phi)$,
  it follows that $T_\phi$ is a bounded operator with bounded inverse.
  
  Likewise, we have
  \begin{equation}
    \left| F(X) - S_\phi F(X) \right|
    \le \sum_{j,k} \iint_{B \times \hat{B}} \phi_{jk}(Y) \left| R(Y, X) \right| \left| F(Y) - F(X_{jk}) \right| \dd\xi(Y).
    \label{eq:S_phi_inv}
  \end{equation}
  Assuming $Y \in \tau_{X_{jk}} \mathcal{U}$ and using again the reproducing property, it follows that
  \begin{align*}
    \left| F(Y) - F(X_{jk}) \right| 
    &\le \iint \left| F(X') \right| \left| R(X',Y) - R(X',X_{jk}) \right| \dd\xi(X') \\
    &\le \iint \left| F(X') \right| \osc_\mathcal{U} (Y,X') \, \dd\xi(X'),
  \end{align*}
  and since $(\phi_{jk})$ is a partition of unity,
  \begin{align*}
    \sum_{j,k} \phi_{jk}(Y) \left| F(Y) - F(X_{jk}) \right|
    &\le \sum_{j,k} \phi_{jk}(Y) \iint \left| F(X') \right| \osc_\mathcal{U} (Y,X') \, \dd\xi(X') \nonumber \\
    &= \iint \left| F(X') \right| \osc_\mathcal{U} (Y,X') \, \dd\xi(X').
  \end{align*}
  Using the weighted Young inequality, we obtain from \eqref{eq:S_phi_inv}
  \begin{align*}
    \left\Vert F - S_\phi F \right\Vert_{\L_m^p}
    &\le C_\psi \Bigl\Vert \sum_{j,k} \left| F - F(X_{jk}) \right| \phi_{jk} \Bigr\Vert_{\L_m^p} \\
    &\le C_\psi \frac{\gamma}{C_\psi} \left\Vert F \right\Vert_{\L_m^p}.
  \end{align*}
  Consequently, $\left\Vert \id - S_\phi \right\Vert < 1$, so that $S_\phi$ has a bounded inverse.
\end{proof}

Corresponding representations for $f \in M_m^p$ can be obtained using the invertibility of $T_\phi, S_\phi$ and $V_\psi$.

\begin{cor}
  If $f \in M_m^p$, then $f$ can be decomposed as
  \begin{equation}
    f = \sum_{j,k} c_{jk}\, \rho(X_{jk}) \psi, \qquad \text{where} \quad c_{jk} = \big\langle T_\phi^{-1} V_\psi f, \phi_{jk} \big\rangle.
  \label{eq:decomp_atom}
  \end{equation}
  Moreover, $f$ can be reconstructed via
  \begin{equation}
    f = \sum_{j,k} \left< f, \rho(X_{jk}) \psi \right> e_{jk} =
    \sum_{j,k} V_\psi f(X_{jk}) e_{jk},
  \label{eq:reconstr_atom}
  \end{equation}
  where
  \begin{equation*}
    e_{jk} = V^*_\psi E_{jk}, \qquad \text{and} \quad E_{jk}(X) = S_\phi^{-1} \left< \phi_{jk}, R(X, \cdot) \right>
  \end{equation*}
\end{cor}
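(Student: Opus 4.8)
The plan is to transport the invertibility of $T_\phi$ and $S_\phi$ on $VM_m^p$, established in theorem \ref{thm:atom_VM}, back to the coorbit space $M_m^p$ by means of the voice transform, exploiting its injectivity together with the reproducing identity $R(X_{jk}, \cdot) = V_\psi\bigl(\rho(X_{jk})\psi\bigr)$.

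\emph{Decomposition \eqref{eq:decomp_atom}.} First I would set $G = T_\phi^{-1} V_\psi f$; since $f \in M_m^p$ gives $V_\psi f \in VM_m^p$ and $T_\phi^{-1}$ is bounded there, we have $G \in VM_m^p$. Unwinding the definition of $T_\phi$,
\[
  V_\psi f = T_\phi G = \sum_{j,k} \left< G, \phi_{jk} \right> R(X_{jk}, \cdot) = \sum_{j,k} c_{jk}\, V_\psi\bigl(\rho(X_{jk})\psi\bigr),
\]
with $c_{jk} = \left< G, \phi_{jk} \right> = \bigl< T_\phi^{-1} V_\psi f, \phi_{jk}\bigr>$. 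Pulling $V_\psi$ out of the sum by linearity and continuity, and invoking its injectivity, yields $f = \sum_{j,k} c_{jk}\, \rho(X_{jk})\psi$.

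\emph{Reconstruction \eqref{eq:reconstr_atom}.} Here I would start from $V_\psi f = S_\phi^{-1} S_\phi V_\psi f$ and expand $S_\phi V_\psi f$ using its definition, noting $V_\psi f(X_{jk}) = \left< f, \rho(X_{jk})\psi \right>$. Interchanging the bounded operator $S_\phi^{-1}$ with the sum gives
\[
  V_\psi f = \sum_{j,k} V_\psi f(X_{jk})\, E_{jk}, \qquad E_{jk}(X) = S_\phi^{-1} \left< \phi_{jk}, R(X, \cdot) \right>.
\]
Applying $V_\psi^*$ and using the identity $V_\psi^* V_\psi = \id$ on $M_m^p$ then produces $f = \sum_{j,k} V_\psi f(X_{jk})\, e_{jk}$ with $e_{jk} = V_\psi^* E_{jk}$, which is exactly the asserted reconstruction.

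The main obstacle is the rigorous justification of interchanging the bounded operators $V_\psi$, $V_\psi^*$ and $S_\phi^{-1}$ with the infinite sums, i.e.\ establishing unconditional convergence of the series in the norm of $M_m^p$ for $p < \infty$ (and in the weak-$*$ sense for $p = \infty$). This reduces to showing that the synthesis map $(c_{jk}) \mapsto \sum_{j,k} c_{jk}\, \rho(X_{jk})\psi$ is bounded into $M_m^p$, which I expect to follow from the well-spread structure of the sampling points $(X_{jk})$ together with the kernel integrability condition \eqref{eq:kernel_int_cond}; this same boundedness is what upgrades the decomposition to a genuine Banach frame.
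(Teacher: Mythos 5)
Your proposal is correct and follows essentially the same route as the paper's proof: both parts rest on writing $V_\psi f = T_\phi T_\phi^{-1} V_\psi f$, respectively $V_\psi f = S_\phi^{-1} S_\phi V_\psi f$, and transferring back to $M_m^p$ via the reproducing-kernel identity (your $R(X_{jk},\cdot) = V_\psi\bigl(\rho(X_{jk})\psi\bigr)$ is the same fact the paper uses in the dual form $V^*_\psi R(X_{jk},\cdot) = \rho(X_{jk})\psi$, verified there by a one-line pairing computation). One remark: the convergence issue you flag at the end is handled in the paper not by a synthesis-map bound over well-spread points, but simply by the continuity of $V_\psi^* : \L_m^p \to M_m^p$ (theorem \ref{thm:coorb_invers}) applied termwise to series already convergent in $\L_m^p$ by theorem \ref{thm:atom_VM}, so no additional estimate is required for this corollary.
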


\begin{proof}
  Setting $F = V_\psi f \in VM_m^p$, we have
  \begin{equation*}
    F(X) = T_\phi T_\phi^{-1} F = \sum_{j,k} \big\langle T_\phi^{-1} F, \phi_{jk} \big\rangle R(X_{jk}, X),
  \end{equation*}
  and since $V^*_\psi V_\psi$ is the identity on $M_m^p$ and $V^*_\psi$ is continuous, we get
  \begin{equation*}
    f = V^*_\psi V_\psi f = V_\psi^* F = \sum_{j,k} \big\langle T_\phi^{-1} F, \phi_{jk} \big\rangle V^*_\psi R(X_{jk}, \cdot).
  \end{equation*}
  Regarding the last term on the right, we calculate for every $g \in \mathcal{S}(B)$,
  \begin{equation*}
    \begin{split}
      \big\langle V^*_\psi \left[ R(X_{jk}, \cdot) \right] , g \big\rangle
      &= \big\langle R(X_{jk}, \cdot) , V_\psi g \big\rangle \\
      &= \overline{\big\langle V_\psi g, R(X_{jk}, \cdot)  \big\rangle}
      = \overline{V_\psi g (X_{jk})}
      = \big\langle \rho(X_{jk}) \psi, g \big\rangle.
    \end{split}
  \end{equation*}
  This implies $V^*_\psi \left[ R(X_{jk}, \cdot) \right] = \rho(X_{jk}) \psi$, and the identity \eqref{eq:decomp_atom} follows.

  Similarly, since $S_\phi^{-1}$ is continuous, we may write
  \begin{equation*}
    \begin{split}
      F(X) = S_\phi^{-1} S_\phi F(X) &= \sum_{j,k} F(X_{jk}) S_\phi^{-1} \left< \phi_{jk} , R(X, \cdot) \right> \\
      &= \sum_{j,k} F(X_{jk}) E_{jk}(X).
    \end{split}
  \end{equation*}
  Recalling $F = V_\psi f$, we obtain by continuity of $V_\psi^*$
  \begin{equation*}
    \begin{split}
      f = V^*_\psi V_\psi f
      &= V^*_\psi \Bigl( \sum_{j,k} V_\psi f (X_{jk}) E_{jk} \Bigr) \\
      &= \sum_{j,k} V_\psi f(X_{jk}) V^*_\psi E_{jk}
      = \sum_{j,k} V_\psi f(X_{jk}) e_{jk}.
    \end{split}
  \end{equation*}
\end{proof}

\subsection{Frame Bounds}

It remains to determine under what condition the atomic decompositions on the spaces $M_m^p$ give rise to Banach frames.
With $c_{jk} = \bigl< T_\phi^{-1} V_\psi f, \phi_{jk} \bigr>$, respectively $c_{jk} = V_\psi f(X_{jk})$, this means
\begin{equation*}
  f \in M_m^p \quad \text{if and only if} \quad (c_{jk})_{j,k} \in \ell_m^p
\end{equation*}
and
\begin{equation*}
  A \left\Vert f \right\Vert_{M_m^p} \le \left\Vert (c_{jk})_{j,k} \right\Vert_{\ell_m^p} \le A' \left\Vert f \right\Vert_{M_m^p}
\end{equation*}
with $A, A'$ independent of $f$.
An appropriate weighted norm for sequences is given by
\begin{equation*}
  \left\Vert (c_{jk}) \right\Vert_{\ell_m^p} =  \Bigl( \sum_{j,k} \left| c_{jk} \right|^p m(X_{jk})^p \Bigr)^{1/p}
  = \Bigl\Vert \bigl( c_{jk} \, m(X_{jk}) \bigr)_{j,k} \Bigr\Vert_{\ell^p}
\end{equation*}
for $1 \le p < \infty$, and by
\begin{equation*}
  \left\Vert (c_{jk}) \right\Vert_{\ell_m^\infty} =  \sup_{j,k} \left| c_{jk} \right| m(X_{jk})
\end{equation*}
for $p = \infty$.

We assume throughout that the prerequisites of theorem \ref{thm:atom_VM} are satisfied, i.e., the operators $T_\phi$ and $S_\phi$ are automorphisms of $VM_m^p$.
We begin with \eqref{eq:decomp_atom}.

\begin{lem}
  Let $1 \le p < \infty$, then there exists $A' > 0$ so that if $f \in M_m^p$  and $c_{jk} = \big\langle T_\phi^{-1} V_\psi f, \phi_{jk} \big\rangle$, then $(c_{jk}) \in \ell_m^p$ and
  \begin{equation*}
    \left\Vert (c_{jk}) \right\Vert_{\ell_m^p} \le A' \left\Vert f \right\Vert_{M_m^p}.
  \end{equation*}
\end{lem}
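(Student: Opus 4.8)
The plan is to factor the coefficient map as $f \mapsto G := T_\phi^{-1} V_\psi f \mapsto \bigl(\langle G,\phi_{jk}\rangle\bigr)_{j,k}$ and to bound each factor separately. By theorem~\ref{thm:atom_VM} the operator $T_\phi^{-1}$ is bounded on $VM_m^p$, so $G \in VM_m^p \subset \L_m^p$ with $\left\Vert G \right\Vert_{\L_m^p} \le \left\Vert T_\phi^{-1} \right\Vert \left\Vert V_\psi f \right\Vert_{\L_m^p} = \left\Vert T_\phi^{-1} \right\Vert \left\Vert f \right\Vert_{M_m^p}$. It then suffices to show that the sampling operator $G \mapsto \bigl(\langle G,\phi_{jk}\rangle\bigr)_{j,k}$ maps $\L_m^p$ boundedly into $\ell_m^p$; the claimed inequality follows with $A'$ the product of the two operator norms.

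For the local estimates, recall $c_{jk} = \langle G,\phi_{jk}\rangle = \iint G(Y)\phi_{jk}(Y)\,\dd\xi(Y)$. Since $0 \le \phi_{jk} \le 1$ and $\supp\phi_{jk} \subset \mathcal{U}_{jk}$, we obtain $\left| c_{jk} \right| \le \int_{\mathcal{U}_{jk}} \left| G(Y) \right|\,\dd\xi(Y)$. The crucial ingredient is that $m = \kappa v$ is comparable to the constant $m(X_{jk})$ across the entire patch $\mathcal{U}_{jk}$. Writing $Y = \tau_{X_{jk}} Z$ with $Z \in \mathcal{U} = U_o \times V_0$, the first component of $Y$ is $\varphi_{w_{jk}}(z)$ and its frequency coordinate differs from that of $X_{jk}$ by the (bounded) frequency coordinate of $Z$, so the permutation inequalities \eqref{eq:kappa_perm} and \eqref{eq:v_perm} bound $m(X_{jk})$ by $m(Y)$ times factors of $\kappa$ and $v$ evaluated on $\mathcal{U}$. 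As $\mathcal{U}$ is relatively compact, these factors are dominated by one constant $C_\mathcal{U}$, independent of $(j,k)$, giving $m(X_{jk}) \le C_\mathcal{U}\,m(Y)$ for all $Y \in \mathcal{U}_{jk}$. Hence $m(X_{jk})\left| c_{jk} \right| \le C_\mathcal{U}\int_{\mathcal{U}_{jk}} \left| G(Y) \right| m(Y)\,\dd\xi(Y)$.

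It remains to pass from this $\L^1$ average to the $\ell^p$ norm. Applying Hölder on each patch with $\tfrac1p+\tfrac1q=1$ gives $m(X_{jk})\left| c_{jk} \right| \le C_\mathcal{U}\,\xi(\mathcal{U}_{jk})^{1/q}\bigl(\int_{\mathcal{U}_{jk}} \left| G\,m \right|^p\,\dd\xi\bigr)^{1/p}$, where the measures $\xi(\mathcal{U}_{jk}) = \mu(U_j)\nu(V_k)$ are uniformly bounded by the construction following \eqref{eq:cov_B} and \eqref{eq:cov_R}, since $\mu(U_j) = \mu(U_o)$ and $\nu(V_k)$ is comparable to $\nu(V_0)$. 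Raising to the $p$-th power and summing over $(j,k)$, the bounded overlap of the cover — at most $r_0 s_0$ of the sets $\mathcal{U}_{jk}$ meet any given point, because the $U_j$ (resp.\ $V_k$) are pairwise disjoint within each of the $r_0$ (resp.\ $s_0$) index classes of \eqref{eq:cov_B} (resp.\ \eqref{eq:cov_R}) — bounds $\sum_{j,k}\int_{\mathcal{U}_{jk}} \left| G\,m \right|^p\,\dd\xi$ by $r_0 s_0\left\Vert G \right\Vert_{\L_m^p}^p$. Collecting the constants yields $\left\Vert (c_{jk}) \right\Vert_{\ell_m^p} \le A'\left\Vert f \right\Vert_{M_m^p}$.

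The main obstacle is the uniform local comparability of the weight established in the second paragraph: one must check that $C_\mathcal{U}$ does not degenerate as $(j,k)$ ranges over the infinite index set, which is precisely what the submultiplicativity encoded in \eqref{eq:kappa_perm}--\eqref{eq:v_perm}, combined with the relative compactness of the fixed initial patch $\mathcal{U}$, guarantees. Once this is secured, the remaining steps — the support bound, Hölder on each patch, and the finite-overlap summation — are routine, and for $p = 1$ the Hölder step is vacuous.
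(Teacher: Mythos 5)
Your proof is correct, and its overall skeleton coincides with the paper's: both factor the coefficient map through $T_\phi^{-1}$ (bounded on $VM_m^p$ by theorem \ref{thm:atom_VM}) and reduce the lemma to boundedness of $F \mapsto \bigl( \left< F, \phi_{jk} \right> \bigr)_{j,k}$ from $\L_m^p$ to $\ell_m^p$, and both rest on the same crux, namely the uniform comparability $m(X_{jk}) \le C_\mathcal{U}\, m(Y)$ for $Y \in \mathcal{U}_{jk}$, obtained exactly as you do from \eqref{eq:kappa_perm}, \eqref{eq:v_perm} and compactness of $\overline{\mathcal{U}}$. Where you genuinely diverge is in how the discretization estimate is closed. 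The paper first proves the sequence-space inequality \eqref{eq:ineq_seq}, whose proof needs the disjointness of the $\mathcal{U}_{jk}$ within each class $J_r \times K_s$ together with a uniform \emph{lower} bound on $\xi(\mathcal{U}_{jk})$; it then dominates $\sum_{j,k} \left< \left| F \right|, \phi_{jk} \right> \mathbf{1}_{\mathcal{U}_{jk}}$ by the integral operator with the counting kernel $Q(X,Y) = \sum_{(j,k) \in \mathcal{J}_X} \mathbf{1}_{\mathcal{U}_{jk}}(Y)$, checks the kernel condition \eqref{eq:count_kernel} via the weight-quotient bound \eqref{eq:weight_quot}, and concludes with the weighted Young inequality (theorem \ref{thm:young}). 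You replace this machinery by a direct patchwise argument: Hölder with exponent $q$ on each $\mathcal{U}_{jk}$, followed by summation using the finite overlap $r_0 s_0$ coming from the index classes of \eqref{eq:cov_B} and \eqref{eq:cov_R}. Your route is more elementary and slightly leaner in its hypotheses — it uses only the uniform \emph{upper} bound on $\xi(\mathcal{U}_{jk})$, never the lower bound, and avoids the kernel-operator formalism altogether — at the modest cost of treating the exponents one at a time (with, as you note, the Hölder step vacuous for $p = 1$), which suffices since the lemma only claims $1 \le p < \infty$. What the paper's formulation buys in exchange is reuse: the kernel $Q$ and the estimates \eqref{eq:weight_quot}, \eqref{eq:count_kernel} are invoked again in the subsequent frame-bound theorems, so the paper's detour sets up infrastructure that your self-contained argument would have to rebuild there.
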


\begin{proof}
  First, we prove the inequality
  \begin{equation}
    \left\Vert (\eta_{jk})_{j,k} \right\Vert_{\ell_m^p} \le C \Bigl\Vert \sum_{j,k} \left| \eta_{jk} \right| \mathbf{1}_{\mathcal{U}_{jk}} \Bigr\Vert_{\L_m^p}
    \label{eq:ineq_seq}
  \end{equation}
  for every sequence $(\eta_{jk}) \in \ell_m^p$, and some fixed constant $C$.
  Using the properties of the covering $(\mathcal{U}_{jk})$, we obtain
  \begin{equation*}
    \begin{split}
    \Bigl\Vert \sum_{j,k} \left| \eta_{jk} \right| \mathbf{1}_{\mathcal{U}_{jk}} \Bigr\Vert_{\L_m^p}^p
    &= \Bigl\Vert \sum_{r=1}^{r_0} \sum_{s=1}^{s_0} \sum_{(j,k) \in J_r \times K_s} \left| \eta_{jk} \right| \mathbf{1}_{\mathcal{U}_{jk}} \Bigr\Vert_{\L_m^p}^p \\
    &\ge \sum_{r=1}^{r_0} \sum_{s=1}^{s_0} \Bigl\Vert \sum_{(j,k)  \in J_r \times K_s} \left| \eta_{jk} \right| \mathbf{1}_{\mathcal{U}_{jk}} \Bigr\Vert_{\L_m^p}^p \\
    &= \sum_{r=1}^{r_0} \sum_{s=1}^{s_0} \sum_{(j,k)  \in J_r \times K_s} \bigl\Vert \left| \eta_{jk} \right| \mathbf{1}_{\mathcal{U}_{jk}} \bigr\Vert_{\L_m^p}^p.
    \end{split}
  \end{equation*}
  If $X \in \mathcal{U}_{jk}$, then $X = \tau_{X_{jk}} Y$ for some $Y \in \mathcal{U}$, and writing $m(z,b) = \kappa(z) v(b)$ with $\kappa$ and $v$ submultiplicative,
  \begin{equation*}
    m(X_{jk}) \le m(X)\, m(Y)
  \end{equation*}
  holds by \eqref{eq:kappa_perm} and \eqref{eq:v_perm}.
  By compactness of $\overline{\mathcal{U}}$,
\begin{equation*}
  \sum_{j,k} \bigl\Vert \left| \eta_{jk} \right| \mathbf{1}_{\mathcal{U}_{jk}} \bigr\Vert_{\L_m^p}^p
  \ge \sup_{Y \in \mathcal{U}}\, m(Y)^{-p}\, \xi(\mathcal{U}_{jk}) \sum_{j,k} \left| \eta_{jk} \right|^p m(X_{jk})^p,
\end{equation*}
the supremum is finite, and since $\xi(\mathcal{U}_{jk})$ has a positive lower bound, \eqref{eq:ineq_seq} follows.

Let $F \in \L_m^p$, then by the above inequality,
\begin{equation*}
  \left\Vert \left( \big< \left| F \right| , \phi_{jk} \big> \right)_{j,k} \right\Vert_{\ell_m^p}
  \le C \Bigl\Vert \sum_{j,k} \left< \left| F \right| , \phi_{jk} \right> \mathbf{1}_{\mathcal{U}_{jk}} \Bigr\Vert_{\L_m^p}.
\end{equation*}
Next, set $\mathcal{J}_X = \left\{ (j,k) \in J \times K : X \in \mathcal{U}_{jk} \right\}$. Then $|\mathcal{J}_X| \le r_0 s_0$ and
\begin{equation*}
  \sum_{j,k} \left< \left| F \right| , \phi_{jk} \right> \mathbf{1}_{\mathcal{U}_{jk}} (X)
  = \sum_{(j,k) \in \mathcal{J}_X} \left< \left| F \right| , \phi_{jk} \right> \le \left< \left| F \right| , Q(X, \cdot) \right>,
\end{equation*}
where
\begin{equation*}
  Q (X,Y) = \sum_{(j,k) \in \mathcal{J}_X} \mathbf{1}_{\mathcal{U}_{jk}}(Y)
  = \sum_{(j,k) \in \mathcal{J}_Y} \mathbf{1}_{\mathcal{U}_{jk}}(X)= Q(Y,X).
\end{equation*}
Therefore, $Q(X,Y) > 0$ holds precisely if for some $j,k$, both $X, Y \in \mathcal{U}_{jk}$.
Furthermore, for each two such points there exist $X', Y' \in \mathcal{U}$ with $X = \tau_{X_{jk}} X'$ and $Y = \tau_{X_{jk}} Y'$, which implies
\begin{equation*}
  m(X) \le m(X_{jk}) m(X') \quad \text{and} \quad m(Y) \ge m(X_{jk}) / m(Y'),
\end{equation*}
again by \eqref{eq:kappa_perm} and \eqref{eq:v_perm}.
By compactness of $\overline{\mathcal{U}}$ we conclude
\begin{equation}
  \frac{m(X)}{m(Y)} \le m(X')m(Y') \le C_0
  \label{eq:weight_quot}
\end{equation}
with $C_0$ independent of $X$ and $Y$.
Thus,
\begin{equation}
  \iint Q(X, Y) \frac{m(X)}{m(Y)} \, \dd\xi(Y)
  \le C_0 r_0 s_0\, \xi(\mathcal{U}_{jk})
  \label{eq:count_kernel}
\end{equation}
is bounded, since $\xi(\mathcal{U}_{jk})$ is bounded by a constant independent of $j,k$.
Similarly, the same is true for
\begin{equation*}
  \iint Q(X, Y) \frac{m(X)}{m(Y)} \, \dd\xi(X).
\end{equation*}
The weighted Young inequality is thus applicable and yields the estimate
\begin{equation*}
  \left\Vert \bigl( \left< F , \phi_{jk} \right> \bigr)_{j,k} \right\Vert_{\ell_m^p} \le C \left\Vert \left< \left| F \right| , Q(X, \cdot) \right> \right\Vert_{\L_m^p(X)}
  \le \tilde{C} \left\Vert F \right\Vert_{\L_m^p}.
\end{equation*}
Setting $F = T_\phi^{-1} V_\psi f$, we finally obtain
\begin{align*}
  \left\Vert \bigl( \bigl< T_\phi^{-1} V_\psi f, \phi_{jk} \bigr> \bigr)_{j,k} \right\Vert_{\ell_m^p} 
  &\le \tilde{C} \big\Vert T_\phi^{-1} V_\psi f \big\Vert_{\L_m^p} \nonumber \\
  &\le \tilde{C} \big\Vert T_\phi^{-1} \big\Vert \left\Vert V_\psi f \right\Vert_{\L_m^p}
  = \tilde{C} \bigl\Vert T_\phi^{-1} \bigr\Vert \left\Vert f \right\Vert_{M_m^p}.
\end{align*}
\end{proof}

\begin{thm}
  Suppose $R$ satisfies the following condition: There exists a constant $C_{\mathcal{U}}$, so that for all $X \in B \times \hat{B}$,
  \begin{equation}
    \iint \sup_{Z \in \mathcal{U}} \left| R(\tau_Z Y, X) \right| \frac{m(X)}{m(\tau_Z Y)}\, \dd\xi(Y) \le C_{\mathcal{U}}.
    \label{eq:framebounds_ass1}
  \end{equation}
  Let $1 \le p \le \infty$ and $(c_{jk})_{j,k} \in \ell_{m}^p$.
  Then $f = \sum_{j,k} c_{jk}\, \rho(X_{jk}) \psi \in M_{m}^p$ and
  \begin{equation}
    A \left\Vert f \right\Vert_{M_{m}^p} \le \left\Vert (c_{jk}) \right\Vert_{\ell_{m}^p}
    \label{eq:framebounds_Ml}
  \end{equation}
  with $A > 0$ independent of $f$.
\end{thm}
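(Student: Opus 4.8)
The plan is to apply $V_\psi$ to the synthesis $f=\sum_{j,k}c_{jk}\,\rho(X_{jk})\psi$ and reduce the claim to a weighted Schur estimate for a discrete kernel. Since $V_\psi$ is the dual pairing against $\rho(X)\psi$ and $R(X,Y)=\langle\rho(X)\psi,\rho(Y)\psi\rangle$, evaluating termwise gives $V_\psi f(X)=\sum_{j,k}c_{jk}\,R(X_{jk},X)$, so writing $a_{jk}=|c_{jk}|\,m(X_{jk})$ (whence $\|(a_{jk})\|_{\ell^p}=\|(c_{jk})\|_{\ell_m^p}$) we obtain the pointwise bound $m(X)\,|V_\psi f(X)|\le\sum_{j,k}a_{jk}\,|R(X_{jk},X)|\,m(X)/m(X_{jk})$. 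I would first establish \eqref{eq:framebounds_Ml} for finitely supported sequences; the resulting estimate shows that the partial sums form a Cauchy sequence in $M_m^p$ (for $p=\infty$, a weak-$*$ convergent net in $(H_m^1)'$), which simultaneously proves convergence of the series and membership $f\in M_m^p$.

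The heart of the matter is to bound the operator $(a_{jk})\mapsto\sum_{j,k}a_{jk}\,|R(X_{jk},X)|\,m(X)/m(X_{jk})$ on $\ell^p$, which I would settle at the two endpoints and then interpolate. For $p=1$, Tonelli reduces the $\L_m^1$-norm to $\sup_{j,k}\int|R(X_{jk},X)|\,m(X)/m(X_{jk})\,\dd\xi(X)$, and this is uniformly bounded: it is precisely the admissibility condition \eqref{eq:kernel_int_cond} with the first slot frozen at $X_{jk}$. For $p=\infty$ I would control the discrete row sum $\sum_{j,k}|R(X_{jk},X)|\,m(X)/m(X_{jk})$ by the integral in the hypothesis \eqref{eq:framebounds_ass1}: since $\mathcal U_{jk}\subset\tau_{X_{jk}}\mathcal U$ and $\mathcal U$ is symmetric, for $Y\in\mathcal U_{jk}$ one has $X_{jk}=\tau_Z Y$ with $Z\in\mathcal U$, so $|R(X_{jk},X)|/m(X_{jk})\le\sup_{Z\in\mathcal U}|R(\tau_Z Y,X)|/m(\tau_Z Y)$; averaging this (constant in $Y$) term over $\mathcal U_{jk}$ and summing, the bounded overlap of the cells turns $\sum_{j,k}$ into $\le r_0 s_0$ times the integral of \eqref{eq:framebounds_ass1}. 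For $1<p<\infty$ the estimate then follows by Schur's lemma, equivalently the weighted Young inequality of theorem \ref{thm:young}, from these two endpoint bounds.

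Two facts about the covering make the discretisation legitimate and should be invoked explicitly. First, $\xi(\mathcal U_{jk})$ is bounded above and below by positive constants independent of $(j,k)$ (from $\mu(U_j)=\mu(U_o)$ and the uniform comparability of $\nu(V_k)$ with $\nu(V_0)$), which is what lets me replace a constant by its cell average and revert. Second, for $Y\in\mathcal U_{jk}$ and $Z\in\mathcal U$ the weights satisfy $m(X_{jk})\asymp m(\tau_Z Y)$, by the submultiplicativity relations \eqref{eq:kappa_perm}, \eqref{eq:v_perm} together with the compactness of $\overline{\mathcal U}$, exactly as in \eqref{eq:weight_quot}; this matches the factor $m(X)/m(X_{jk})$ of the discrete sum with the factor $m(X)/m(\tau_Z Y)$ appearing in \eqref{eq:framebounds_ass1}. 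I expect the main obstacle to be precisely this passage from $\sum_{j,k}$ to $\int\dd\xi(Y)$: the $\sup_{Z\in\mathcal U}$ in the hypothesis is indispensable, because it provides a bound on $|R(X_{jk},X)|$ that is uniform over the entire cell $\mathcal U_{jk}$ and hence survives the comparison of the discrete sum with the integral, whereas the interpolation for intermediate $p$ is routine once admissibility (controlling integration in $X$) and \eqref{eq:framebounds_ass1} (controlling the row sum in $(j,k)$) are both available.
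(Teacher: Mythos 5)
Your proposal is correct and takes essentially the same route as the paper: both reduce the claim to boundedness of the synthesis operator $(c_{jk}) \mapsto \sum_{j,k} c_{jk}\, R(X_{jk}, \cdot)$ from $\ell_m^p$ to $\L_m^p$, obtain the $p=1$ endpoint from the admissibility condition \eqref{eq:kernel_int_cond}, obtain the $p=\infty$ endpoint from \eqref{eq:framebounds_ass1} by the identical cell-averaging and bounded-overlap argument (comparing $m(X_{jk})$ with $m(\tau_Z Y)$ via \eqref{eq:weight_quot} and using the uniform lower bound on $\xi(\mathcal{U}_{jk})$), and pass to intermediate $p$ by interpolation -- the paper invokes the weighted Riesz--Thorin theorem \ref{thm:riesz_thorin} where you cite Schur's test, which is equivalent given the two endpoint bounds. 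Your explicit treatment of unconditional convergence via finitely supported sequences and a Cauchy (resp.\ weak-$*$) argument is a careful touch the paper leaves implicit, but it does not constitute a different approach.
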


\begin{proof}
  We have to show that the operator
  \begin{equation*}
    T: \ell_{m}^p \to \L_{m}^p, \quad (c_{jk}) \mapsto \sum_{j,k} c_{jk} R(X_{jk}, \cdot)
  \end{equation*}
  is bounded for all $1 \le p \le \infty$.
  By the weighted Riesz-Thorin theorem \ref{thm:riesz_thorin}, it is enough to prove this statement for $p = 1$ and $p = \infty$.
  If $p = 1$, then
  \begin{equation*}
    \begin{split}
      \Bigl\Vert \sum_{j,k} c_{jk} R(X_{jk}, \cdot) \Bigr\Vert_{\L_m^1}
      &\le \sum_{j,k} \bigl| c_{jk} \bigr| \bigl\Vert R(X_{jk}, \cdot) \bigr\Vert_{\L_m^1} \\
      &= \sum_{j,k} \bigl| c_{jk} \bigr| m(X_{jk}) \biggl\Vert \frac{R(X_{jk}, \cdot)}{m(X_{jk})} \biggr\Vert_{\L_m^1} \\
      &\le C_\psi \bigl\Vert (c_{jk}) \bigr\Vert_{\ell_m^1}.
    \end{split}
  \end{equation*}
  For $p = \infty$, we obtain the estimate
  \begin{align}
    \Bigl\Vert \sum_{j,k} c_{jk} R(X_{jk}, \cdot) \Bigr\Vert_{\L_m^\infty}
    &= \sup_{X \in B \times \hat{B}} \sum_{j,k} \bigl| c_{jk} \bigr| \bigl| R(X_{jk}, X) \bigr| m(X) \nonumber \\
    &\le \sup_{j,k}\, \bigl| c_{jk} \bigr| m(X_{jk}) \sup_{X \in B \times \hat{B}} \sum_{j,k}  \bigl| R(X_{jk}, X) \bigr| \frac{m(X)}{m(X_{jk})} \nonumber \\
    &= \bigl\Vert (c_{jk}) \bigr\Vert_{\ell_m^\infty} \sup_{X \in B \times \hat{B}} \sum_{j,k}  \bigl| R(X_{jk}, X) \bigr| \frac{m(X)}{m(X_{jk})}.
    \label{eq:framebounds_W}
  \end{align}
  Regarding the last term on the right side, we note that
  \begin{align*}
  \left| R(X_{jk}, X) \right| \frac{m(X)}{m(X_{jk})} 
  &\le \sup_{Z \in \mathcal{U}} \left| R(\tau_Z Y, X) \right| \frac{m(X)}{m(\tau_Z Y)}
  \end{align*}
  holds whenever $Y \in \mathcal{U}_{jk}$.
  Integration turns this into
  \begin{equation*}
    \iint_{\mathcal{U}_{jk}} \sup_{Z \in \mathcal{U}} \left| R(\tau_Z Y, X) \right| \frac{m(X)}{m(\tau_Z Y)} \, \dd\xi(Y) \ge \left| R(X_{jk}, X) \right| \frac{m(X)}{m(X_{jk})} \xi(\mathcal{U}_{jk})
  \end{equation*}
  for all $j, k$.
  Fix $1 \le r \le r_0$ and $1 \le s \le s_0$, then using \eqref{eq:framebounds_ass1} and the above, we obtain
  \begin{align*}
    C_\mathcal{U} &\ge \iint\limits_{B \times \hat{B}} \sup_{Z \in \mathcal{U}} \left| R(\tau_Z Y, X) \right| \frac{m(X)}{m(\tau_Z Y)}\, \dd\xi(Y) \\
    &\ge \sum_{(j,k) \in J_r \times K_s} \iint_{\mathcal{U}_{jk}} \sup_{Z \in \mathcal{U}} \left| R(\tau_Z Y, X) \right| \frac{m(X)}{m(\tau_Z Y)}\, \dd\xi(Y) \nonumber \\
    &\ge \sum_{(j,k) \in J_r \times K_s} \left| R(X_{jk},X) \right| \frac{m(X)}{m(X_{jk})} \xi(\mathcal{U}_{jk}),
  \end{align*}
  which implies
  \begin{equation*}
    \sum_{(j,k) \in J_r\times K_s} R(X_{jk},X) \frac{m(X)}{m(X_{jk})} \le \frac{C_\mathcal{U}}{\rho(\mathcal{U}_{jk})} \le \tilde{C}_\mathcal{U}
  \end{equation*}
  with $\tilde{C}_{\mathcal{U}}$ independent of $j,k$ (and $r,s$), since $\xi(\mathcal{U}_{jk})$ is uniformly bounded from below.
  Returning to \eqref{eq:framebounds_W}, we obtain
  \begin{equation*}
    \Bigl\Vert \sum_{j,k} c_{jk} R(X_{jk}, \cdot) \Bigr\Vert_{\L_m^\infty}
    \le r_0 s_0 \tilde{C}_\mathcal{U} \left\Vert (c_{jk}) \right\Vert_{\ell_m^\infty}.
  \end{equation*}

  Finally, by continuity of $V_\psi$,
  \begin{align*}
    \sum_{j,k} c_{jk} R(X_{jk},X) &= \sum_{j,k} c_{jk} V_\psi[\rho(X_{jk})\psi](X) = V_\psi \Bigl[ \sum_{j,k} c_{jk} \rho(X_{jk})\psi \Bigr] (X)
  \end{align*}
  and the above inequality may be expressed as
  \begin{equation*}
    \Bigl\Vert \sum_{j,k} c_{jk} \rho(X_{jk})\psi  \Bigr\Vert_{M_m^\infty} = \Bigl\Vert \sum_{j,k} c_{jk} R(X_{jk},\cdot) \Bigr\Vert_{\L_m^\infty}
    \le \frac{1}{A} \left\Vert (c_{jk}) \right\Vert_{\ell_m^\infty}.
  \end{equation*}
\end{proof}

Now we focus on the frame bounds for the decomposition in \eqref{eq:reconstr_atom}.

\begin{thm}
  Let $f \in M_m^p$. Then $\bigl( V_\psi f (X_{jk}) \bigr)_{j,k} \in \ell_m^p$ and
  \begin{equation*}
    \left\Vert \bigl( V_\psi f (X_{jk}) \bigr)_{j,k} \right\Vert_{\ell_m^p} \le A' \left\Vert f \right\Vert_{M_m^p}
  \end{equation*}
  with $A' > 0$ independent of $f$.
\end{thm}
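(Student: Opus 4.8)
The plan is to set $F = V_\psi f \in VM_m^p$ and to control the samples $F(X_{jk})$ by local averages of $F$, exploiting that $F$ lies in the reproducing kernel space so that its point values cannot differ much from its mean over a set $\mathcal{U}_{jk}$. Since each $\mathcal{U}_{jk}$ has $\xi$-measure uniformly bounded above and below, the difference between a point value and an average is governed entirely by $\osc_\mathcal{U}$, for which the required integrability is guaranteed by the hypothesis of Theorem \ref{thm:atom_VM} assumed throughout this subsection.

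Concretely, I would first fix $(j,k)$, take $Y \in \mathcal{U}_{jk}$, write $Y = \tau_{X_{jk}} X'$ with $X' \in \mathcal{U}$, and reuse the estimate already obtained in the $S_\phi$ part of the proof of Theorem \ref{thm:atom_VM}, namely
\begin{equation*}
  |F(X_{jk})| \le |F(Y)| + \iint |F(X'')|\, \osc_\mathcal{U}(Y, X'')\,\dd\xi(X'').
\end{equation*}
Integrating in $Y$ over $\mathcal{U}_{jk}$ and dividing by $\xi(\mathcal{U}_{jk})$ (which has a uniform positive lower bound) yields
\begin{equation*}
  |F(X_{jk})| \le \frac{1}{\xi(\mathcal{U}_{jk})}\iint_{\mathcal{U}_{jk}} |F(Y)|\,\dd\xi(Y) + \frac{1}{\xi(\mathcal{U}_{jk})}\iint_{\mathcal{U}_{jk}} \iint |F(X'')|\, \osc_\mathcal{U}(Y, X'')\,\dd\xi(X'')\,\dd\xi(Y),
\end{equation*}
so the sequence $(F(X_{jk}))$ is dominated by a local-average term and an oscillation-smoothing term.

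Next I would estimate the two contributions separately in $\ell_m^p$. The first term is a constant multiple of $\langle |F|, \mathbf{1}_{\mathcal{U}_{jk}} \rangle$, and its $\ell_m^p$ norm is controlled by $\|F\|_{\L_m^p}$ by repeating the argument of the preceding lemma: inequality \eqref{eq:ineq_seq}, the finite-overlap bound $|\mathcal{J}_X| \le r_0 s_0$, and the weighted Young inequality applied to the counting kernel $Q$ through \eqref{eq:count_kernel}. For the second term I would set $G(Y) = \iint |F(X'')|\,\osc_\mathcal{U}(Y,X'')\,\dd\xi(X'')$ and show $\|G\|_{\L_m^p} \le (\gamma/C_\psi)\|F\|_{\L_m^p}$ by the weighted Young inequality (Theorem \ref{thm:young}), using precisely the oscillation-integrability hypothesis of Theorem \ref{thm:atom_VM}; the local average $\frac{1}{\xi(\mathcal{U}_{jk})}\langle G, \mathbf{1}_{\mathcal{U}_{jk}} \rangle$ is then handled exactly as the first term. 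Summing the two bounds gives $\|(F(X_{jk}))\|_{\ell_m^p} \lesssim \|f\|_{M_m^p}$ with a constant $A'$ depending only on $C_\psi$, $\gamma$, $r_0 s_0$, and the uniform measure bounds on the $\mathcal{U}_{jk}$.

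The main obstacle I expect is the bookkeeping of the weight quotients across the averaging step: passing from the continuous norm of $|F|$ (or $G$) to the discrete weighted norm requires the submultiplicativity relations \eqref{eq:kappa_perm} and \eqref{eq:v_perm} together with the uniform quotient bound \eqref{eq:weight_quot} to absorb the discrepancy between $m(X_{jk})$ and $m(Y)$ for $Y \in \mathcal{U}_{jk}$, and keeping these constants uniform in $(j,k)$ and across the disjoint subfamilies $J_r \times K_s$ is the delicate point. By contrast, the analytic core — comparing samples to averages via $\osc_\mathcal{U}$ and invoking the weighted Young inequality — is entirely supplied by the preceding results.
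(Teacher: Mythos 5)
Your proof is correct, but it follows a genuinely different route from the paper's. The paper samples directly through the reproducing identity $F(X_{jk}) = \left< F, R(X_{jk}, \cdot) \right>$ and verifies only the two endpoint cases: for $p=\infty$ the admissibility bound \eqref{eq:kernel_int_cond} yields the constant $C_\psi$ outright, while for $p=1$ it reuses the discrete kernel-sum estimate $\sup_X \sum_{j,k} \left| R(X_{jk},X) \right| m(X_{jk})/m(X) \le r_0 s_0 \tilde{C}_\mathcal{U}$ derived from hypothesis \eqref{eq:framebounds_ass1} in the preceding theorem; the intermediate exponents are then supplied by the weighted Riesz--Thorin theorem \ref{thm:riesz_thorin}. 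You instead dominate each sample by a local average plus an oscillation-smoothing term and run both through the counting kernel $Q$ and the weighted Young inequality. This buys two things: your estimate treats every $1 \le p \le \infty$ simultaneously with no interpolation step, and it never invokes condition \eqref{eq:framebounds_ass1} --- only the standing oscillation hypothesis of theorem \ref{thm:atom_VM} (needed anyway for $S_\phi^{-1}$ in the reconstruction) together with the uniform measure bounds on the covering. Conversely, the paper's argument is shorter at the endpoints and remains valid in situations where the oscillation condition is not assumed, since the kernel conditions alone suffice there. Two small points to tidy up in your write-up: inequality \eqref{eq:ineq_seq} is proved in the paper only for $1 \le p < \infty$, so for $p = \infty$ you should note the one-line extension (for $X = \tau_{X_{jk}} Y \in \mathcal{U}_{jk}$ one has $m(X_{jk}) \le m(X)\, m(Y)$ with $m(Y)$ bounded on the compact set $\overline{\mathcal{U}}$); and with $\mathbf{1}_{\mathcal{U}_{jk}}$ in place of $\phi_{jk}$ the identity $\sum_{j,k} \bigl< \left| F \right|, \mathbf{1}_{\mathcal{U}_{jk}} \bigr> \mathbf{1}_{\mathcal{U}_{jk}}(X) = \bigl< \left| F \right|, Q(X,\cdot) \bigr>$ holds exactly, so that step of the preceding lemma transfers verbatim, as you claim.
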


\begin{proof}
  Set $F = V_\psi f$, then the above inequality is equivalent to
  \begin{equation*}
    \left\Vert \bigl( F(X_{jk}) \bigr)_{j,k} \right\Vert_{\ell_m^p} \le A' \left\Vert F \right\Vert_{\L_m^p}.
  \end{equation*}

  For $p = 1$, we obtain the estimate
  \begin{align*}
    \sum_{j,k} \left| F(X_{jk}) \right| m(X_{jk})
    &= \sum_{j,k} \left| \left< F, R(X_{jk}, \cdot) \right> \right| m(X_{jk}) \\
    &\le \sum_{j,k} \iint \left| F(X) \right| \left| R(X_{j,k},X) \right| m(X_{jk}) \, \dd\xi(X) \\
    &\le \iint \left| F(X) \right| m(X) \sum_{j,k} \left| R(X_{jk}, X) \right| \frac{m(X_{jk})}{m(X)} \, \dd\xi(X) \\
    &\le \left\Vert F \right\Vert_{\L_m^1} \sup_{X \in B\times \hat{B}} \sum_{j,k} \left| R(X_{jk}, X) \right| \frac{m(X_{jk})}{m(X)}.
  \end{align*}
  As before, the supremum on the right is bounded by a constant $r_0 s_0 \tilde{C}_{\mathcal{U}}$.

If $p = \infty$, then
\begin{align*}
  \sup_{j,k} \left| F(X_{jk}) \right| m(X_{jk})
  &= \sup_{j,k} \left| \left< F, R(X_{jk}, \cdot \right> \right| m(X_{jk}) \\
  &\le \sup_{j,k} \int \left| F(X) \right| \left| R(X_{jk},X) \right| m(X_{jk}) \, \dd\xi(X) \\
  &\le \sup_{j,k} \int \left| R(X_{jk},X) \right| \frac{m(X_{jk})}{m(X)} \, \dd\xi(X)\ \sup_{Y} \left| F(Y) \right| m(Y) \\
  &\le C_\psi \left\Vert F \right\Vert_{\L_m^\infty}.
\end{align*}
The general result follows again by the weighted Riesz-Thorin theorem.
\end{proof}

\begin{thm}
  Let $f$ be a distribution.
  If $\bigl( V_\psi f (X_{jk}) \bigr)_{j,k} \in \ell_m^p$, then $f \in M_m^p$, and
  \begin{equation*}
    A \left\Vert f \right\Vert_{M_m^p} \le \Bigl\Vert \bigl( V_\psi f(X_{jk}) \bigr)_{j,k} \Bigr\Vert_{\ell_m^p},
  \end{equation*}
  with $A$ independent of $f$.
\end{thm}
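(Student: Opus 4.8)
The plan is to reduce the lower bound to the boundedness of a single synthesis operator, and then to invoke the invertibility of $S_\phi$ supplied by theorem~\ref{thm:atom_VM}. Write $F = V_\psi f$ (continuous, since $f$ is a distribution) and put $d_{jk} = F(X_{jk})$. Straight from the definition \eqref{eq:approx_op_def} of $S_\phi$ one reads off $S_\phi F = \Phi(d)$, where
\begin{equation*}
  \Phi(d)(X) = \sum_{j,k} d_{jk} \left< \phi_{jk}, R(X,\cdot) \right> = \sum_{j,k} d_{jk} \iint_{B \times \hat{B}} \phi_{jk}(Y)\, R(Y,X)\, \dd\xi(Y).
\end{equation*}
Since $S_\phi$ is an automorphism of $VM_m^p$, I would then estimate $\Vert F \Vert_{\L_m^p} \le \Vert S_\phi^{-1} \Vert\, \Vert S_\phi F \Vert_{\L_m^p} = \Vert S_\phi^{-1} \Vert\, \Vert \Phi(d) \Vert_{\L_m^p}$, so that the whole statement comes down to proving that $\Phi : \ell_m^p \to \L_m^p$ is bounded. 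The theorem then follows with $A = (\Vert S_\phi^{-1} \Vert\, C)^{-1}$, $C$ being the norm of $\Phi$; in particular $F \in \L_m^p$, which is exactly the assertion $f \in M_m^p$.

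To see that $\Phi$ is bounded I would proceed exactly as in the two preceding theorems, applying the weighted Riesz--Thorin theorem~\ref{thm:riesz_thorin} to reduce to the endpoints $p = 1$ and $p = \infty$. The only new ingredient is that the point value $R(X_{jk}, X)$ is replaced by its average $\left< \phi_{jk}, R(X,\cdot) \right>$ against $\phi_{jk}$; because $0 \le \phi_{jk} \le 1$ is supported in $\mathcal{U}_{jk} = \tau_{X_{jk}} \mathcal{U}$ and $\sum_{j,k} \phi_{jk} = 1$, this average inherits the size and integrability of $R$ up to the uniformly bounded factor $\xi(\mathcal{U}_{jk})$. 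For $p = 1$, I would bound $\Vert \left< \phi_{jk}, R(\cdot,\cdot) \right> \Vert_{\L_m^1}$ by $\iint \phi_{jk}(Y) \bigl( \iint |R(Y,X)| m(X)\, \dd\xi(X) \bigr) \dd\xi(Y)$, dominate the inner integral by $C_\psi\, m(Y)$ using \eqref{eq:kernel_int_cond} together with the symmetry $|R(X,Y)| = |R(Y,X)|$, and finally replace $m(Y)$ by $m(X_{jk})$ via the uniform comparison $\sup_{Y \in \mathcal{U}_{jk}} m(Y) \le C_0\, m(X_{jk})$, which follows from \eqref{eq:kappa_perm}, \eqref{eq:v_perm} and compactness of $\overline{\mathcal{U}}$ (cf.\ \eqref{eq:weight_quot}); summing over $j,k$ gives $\Vert \Phi(d) \Vert_{\L_m^1} \le C \Vert d \Vert_{\ell_m^1}$. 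For $p = \infty$, after factoring out $\Vert d \Vert_{\ell_m^\infty}$ the same comparison bounds $1/m(X_{jk})$ by $C_0'/m(Y)$ on $\mathcal{U}_{jk}$, and $\sum_{j,k} \phi_{jk} = 1$ collapses the resulting sum into the single integral $\iint |R(Y,X)|\, m(X)/m(Y)\, \dd\xi(Y)$, which is bounded uniformly in $X$ by the frame hypothesis \eqref{eq:framebounds_ass1}.

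The delicate part is not the Schur-type estimates, which merely rerun the machinery already in place, but the bookkeeping of weight ratios: one must ensure that the direction of the kernel integral arising at each endpoint is precisely the one that is controlled --- the $p = 1$ bound rests on \eqref{eq:kernel_int_cond} read through the symmetry of $|R|$, while the $p = \infty$ bound genuinely requires the companion estimate \eqref{eq:framebounds_ass1}. A second, more conceptual point needing care is the identity $F = S_\phi^{-1} S_\phi F$ used in the reduction: a priori $S_\phi^{-1}$ is only known to act on $VM_m^p$, whereas membership $F \in VM_m^p$ is exactly what is to be shown. I would resolve this in the same manner as the reconstruction formula \eqref{eq:reconstr_atom}, by first noting that $S_\phi F = \Phi(d) = V_\psi \bigl( V_\psi^* \sum_{j,k} d_{jk} \phi_{jk} \bigr)$ already lies in $VM_m^p$ once $\Phi$ is bounded, then applying the bounded inverse $S_\phi^{-1}$ there and identifying the result with $F$ through the injectivity of $V_\psi$ and the reproducing property of $R$ (equivalently, by first establishing the bound on the dense subspace $\mathcal{S}(B)$, where the reconstruction is already valid, and extending).
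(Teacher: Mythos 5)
Your proposal is correct and follows essentially the same route as the paper: reduce the bound to the boundedness of the synthesis operator $(d_{jk}) \mapsto \sum_{j,k} d_{jk} \left< \phi_{jk}, R(\,\cdot\,, \cdot) \right>$ from $\ell_m^p$ to $\L_m^p$ via the weighted Riesz--Thorin theorem at the endpoints $p=1,\infty$, then use continuity of $S_\phi^{-1}$ on $VM_m^p$ and identify $S_\phi^{-1}\Phi(d)$ with $V_\psi f$. The only minor deviations are that the paper obtains the $p=\infty$ endpoint from the weighted Young inequality (constant $C_\psi$ from \eqref{eq:kernel_int_cond}) together with the $r_0 s_0$-bounded overlap of the $\phi_{jk}$, rather than from \eqref{eq:framebounds_ass1} as you claim is ``genuinely required'' (your route is still valid, since the fixed choice $Z = $ origin in \eqref{eq:framebounds_ass1} dominates the needed integral), and that your explicit treatment of the circularity in applying $S_\phi^{-1}$ --- establishing $\Phi(d) \in VM_m^p$ first, then identifying the preimage --- spells out what the paper compresses into its uniqueness remark around \eqref{eq:VM_unique}.
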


\begin{proof}
  We show that the operator
  \begin{equation*}
    (c_{jk}) \mapsto F, \quad F(X) = \Bigl< \sum_{j,k} c_{jk} \phi_{jk}, R(X, \cdot) \Bigr>
  \end{equation*}
  is bounded from $\ell_m^p$ to $\L_m^p$ for all $1 \le p \le \infty$, again by using the weighted Riesz-Thorin theorem.

  Let $(c_{jk}) \in \ell_m^1$.
  Using the weighted Young inequality, we obtain the estimate
  \begin{align*}
    \Bigl\Vert \Bigl< \sum_{j,k} c_{jk} \phi_{jk} , R(X, \cdot) \Bigr> \Bigr\Vert_{\L_m^1(X)}
    &\le C_\psi \Bigl\Vert \sum_{j,k} c_{jk} \phi_{jk} \Bigr\Vert_{\L_m^1} \\
    &\le C_\psi \sum_{j,k} \iint \left| c_{jk} \right| m(X_{jk})\, \phi_{jk}(X) \frac{m(X)}{m(X_{jk})} \, \dd\xi(X) \\
    &\le C_\psi \left\Vert (c_{jk}) \right\Vert_{\ell_m^1} \sup_{j,k} \iint_{\mathcal{U}_{jk}} \frac{m(X)}{m(X_{jk})}\, \dd\xi(X).
  \end{align*}
  The integral term on the right is bounded uniformly in $j,k$, with a similar argument as in \eqref{eq:weight_quot} and \eqref{eq:count_kernel}.

  The estimate for $(c_{jk}) \in \ell_m^\infty$ is also obtained using the weighted Young inequality,
  \begin{align*}
    \Bigl\Vert \Bigl< \sum_{j,k} c_{jk} \phi_{jk} , R(X, \cdot) \Bigr> \Bigr\Vert_{\L_m^\infty(X)}
    &\le C_\psi \sup_X \Bigl| \sum_{j,k} c_{jk} \phi_{jk}(X) \Bigr| m(X) \\
    &\le C_\psi \sup_{(j,k)} \left| c_{jk} \right| m(X_{jk}) \cdot \sup_X \sum_{j,k} \phi_{jk} (X) \frac{m(X)}{m(X_{jk})} \\
    &\le C_\psi C \left\Vert (c_{jk}) \right\Vert_{\ell_m^\infty},
  \end{align*}
  with some constant $C$, since at most $r_0 s_0$ many $\phi_{jk}(X)$ are nonzero.
    
  Next, $(c_{jk}) \in \ell_m^p$ implies $\sum c_{jk} \phi_{jk} \in \L_m^p$, and by continuity of $V_\psi$ and $V_\psi^*$, we have
  \begin{equation*}
    \Bigl< \sum_{j,k} c_{jk} \phi_{jk}, R(X, \cdot) \Bigr> = \sum_{j,k} c_{jk} \bigl< \phi_{jk}, R(X, \cdot) \bigr> \in VM_m^p.
  \end{equation*}
  Since $S_\phi^{-1}$ is continuous, the operator mapping sequences $(c_{jk})$ to functions
  \begin{equation}
    \label{eq:VM_unique}
    X \mapsto S_\phi^{-1} \sum_{j,k} c_{jk} \bigl< \phi_{jk}, R(X, \cdot) \bigr>
    = \sum_{j,k} c_{jk} S_\phi^{-1} \bigl< \phi_{jk}, R(X, \cdot) \bigr>
  \end{equation}
  is thus bounded from $\ell_m^p$ to $VM_m^p$.
  Now, set $c_{jk} = V_\psi f(X_{jk}) = \left< f, \rho(X_{jk}) \psi \right>$ and assume $(c_{jk}) \in \ell_m^p$.
  For this sequence, the function defined in \eqref{eq:VM_unique} is unique in $VM_m^p$, so it must be equal to $V_\psi f$.
  By \eqref{eq:reconstr_atom} and the above calculation, we obtain thus
  \begin{equation*}
    \left\Vert f \right\Vert_{M_m^p} = \left\Vert V_\psi f \right\Vert_{\L_m^p} = \Bigl\Vert \sum_{j,k} c_{jk} S_\phi^{-1} \left< \phi_{jk}, R(X, \cdot) \right> \Bigr\Vert_{\L_m^p} \le \frac{1}{A} \left\Vert (c_{jk})_{j,k} \right\Vert_{\ell_m^p}
  \end{equation*}
  with $A$ independent of $f$.
\end{proof}

\subsection{Nonlinear Approximation}

The approximations obtained so far can be summarized as follows.
Given a frame $\left\{ \psi_{jk} : (j,k) \in J \times K \right\}$ for $M_m^p$, every $f$ admits a suitable sequence $c = (c_{jk})_{j,k}$ with unconditional convergence
\begin{equation}
  \label{eq:atom_simplified}
  f = \sum_{j,k} c_{jk} \psi_{jk}
\end{equation}
and satisfying
\begin{equation}
  \label{eq:framebounds_simplified}
  A \left\Vert f \right\Vert_{M_m^p} \le \left\Vert c \right\Vert_{\ell_m^p} \le A' \left\Vert f \right\Vert_{M_m^p}.
\end{equation}
Due to the non-orthogonal and redundant nature of Banach frames, approximation of $f$ in terms of \emph{finite} sums is not a linear process.
More precisely, given $N \in \N$ and
\begin{equation*}
  \Sigma_N = \Bigl\{ S = \sum_{(j,k) \in \mathcal{I}} b_{jk} \psi_{jk} : \mathcal{I} \subset J \times K,\ \left| \mathcal{I} \right| \le N \Bigr\},
\end{equation*}
we are interested in the quality of the \emph{best $N$-term approximation}, given by the error
\begin{equation*}
  E_N(f)_{M_m^p} = \inf_{S \in \Sigma_N} \left\Vert f - S \right\Vert_{M_m^p}.
\end{equation*}
We shall need the following lemma, see \cite{da_st_te1}, \cite{da_st_te3}, \cite{groechenig_samarah}.

\begin{lem}
  \label{lem:nonlin_approx_lem}
  Let $a = (a_j)_{j \in \N}$ be a decreasing sequence of positive numbers.
  For $p, q > 0$ set $\alpha = 1/p - 1/q$ and $E_{N,q}(a) = \bigl( \sum_{j=N}^\infty a_j^q \bigr)^{1/q}$.
  Then for $0 < p < q \le \infty$, the inequalities
  \begin{equation*}
    2^{-1/p} \left\Vert a \right\Vert_{\ell^p} \le \biggl( \sum_{N=1}^\infty \frac{1}{N} \bigl( N^\alpha E_{N,q}(a) \bigr)^p \biggr)^{1/p} \le C \left\Vert a \right\Vert_{\ell^p}
  \end{equation*}
  hold with $C > 0$ depending only on $p$.
\end{lem}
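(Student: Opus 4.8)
The plan is to establish the two inequalities separately, both resting on the monotonicity of $a$ together with the algebraic identity $\alpha + 1/q = 1/p$ (equivalently $\alpha p = 1 - p/q$), which is what makes the weight $N^{\alpha}$ interact cleanly with the tail sums $E_{N,q}(a)$.

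For the lower bound I would exploit monotonicity to bound each tail from below by a single block. Since $a$ is decreasing,
\[
  E_{N,q}(a)^q = \sum_{j=N}^\infty a_j^q \ge \sum_{j=N}^{2N-1} a_j^q \ge N\, a_{2N-1}^q,
\]
so that $N^{\alpha}E_{N,q}(a) \ge N^{\alpha+1/q} a_{2N-1} = N^{1/p}a_{2N-1}$ and hence $\tfrac1N\bigl(N^\alpha E_{N,q}(a)\bigr)^p \ge a_{2N-1}^p$. Summing over $N$ and using $a_{2N-1}\ge a_{2N}$ to pair consecutive terms gives $\sum_N \tfrac1N(N^\alpha E_{N,q}(a))^p \ge \tfrac12\sum_k a_k^p$, which is precisely the claimed bound $2^{-1/p}\|a\|_{\ell^p}$. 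The degenerate case $q=\infty$, where $E_{N,\infty}(a)=a_N$, is covered verbatim.

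For the upper bound I would first rewrite the middle quantity using $\alpha p - 1 = -p/q$, so that with $\theta := p/q \in (0,1)$ it equals $\sum_{N\ge1} N^{-\theta}\bigl(\sum_{j\ge N} a_j^q\bigr)^\theta$; this is a discrete Hardy-type inequality for a tail sum with a sub-unit outer exponent. The strategy is a dyadic decomposition: grouping the indices $N\in[2^k,2^{k+1})$ and using monotonicity of the tails reduces the sum to $\sum_{k\ge0} 2^{k(1-\theta)} S_k^\theta$ with $S_k=\sum_{j\ge 2^k}a_j^q$, and a further application of monotonicity gives $S_k \le \sum_{l\ge k} 2^l a_{2^l}^q$. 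Writing $g_l := 2^l a_{2^l}^q$, the subadditivity $(\sum_{l\ge k} g_l)^\theta \le \sum_{l\ge k} g_l^\theta$ (valid since $\theta<1$), together with Fubini and the geometric estimate $\sum_{k\le l} 2^{k(1-\theta)} \lesssim 2^{l(1-\theta)}$, collapses everything to $\sum_l 2^{l(1-\theta)} g_l^\theta = \sum_l 2^l a_{2^l}^p$, which by monotonicity satisfies $\sum_l 2^l a_{2^l}^p \le 2\|a\|_{\ell^p}^p$; this yields the constant $C$ of the lemma.

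The main obstacle is the upper bound: because the outer exponent $\theta = p/q$ is strictly less than one, one cannot pull the power inside the tail sum without loss, and it is precisely the monotonicity of $a$ (encoded in the passage $S_k \le \sum_{l\ge k} 2^l a_{2^l}^q$) that compensates and renders the subadditivity step affordable. The only delicate bookkeeping is keeping track of the resulting constant, which arises from the geometric series and the dyadic comparison $\sum_l 2^l a_{2^l}^p \le 2\|a\|_{\ell^p}^p$.
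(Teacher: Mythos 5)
The paper does not actually prove this lemma --- it is imported by reference from \cite{da_st_te1}, \cite{da_st_te3}, \cite{groechenig_samarah} --- so there is no internal proof to compare against; judged on its own, your argument is correct, self-contained, and is essentially the standard dyadic-block proof found in those sources. The lower bound is exactly right: $E_{N,q}(a)^q \ge \sum_{j=N}^{2N-1} a_j^q \ge N a_{2N-1}^q$ gives $\frac1N \bigl(N^\alpha E_{N,q}(a)\bigr)^p \ge a_{2N-1}^p$ via $\alpha + 1/q = 1/p$, and pairing $a_{2N-1}^p \ge \frac12(a_{2N-1}^p + a_{2N}^p)$ produces the factor $2^{-1/p}$. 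In the upper bound, every step checks out: the reduction to $\sum_N N^{-\theta} T_N^\theta$ with $\theta = p/q$ and $T_N = \sum_{j \ge N} a_j^q$, the block estimate over $N \in [2^k, 2^{k+1})$ giving $2^{k(1-\theta)} S_k^\theta$, the tail bound $S_k \le \sum_{l \ge k} 2^l a_{2^l}^q$, the $\theta$-subadditivity (legitimate since $\theta < 1$), Tonelli, the geometric series, the exponent bookkeeping $2^{l(1-\theta)}\bigl(2^l a_{2^l}^q\bigr)^\theta = 2^l a_{2^l}^p$, and finally $\sum_{l \ge 0} 2^l a_{2^l}^p \le 2 \left\Vert a \right\Vert_{\ell^p}^p$.

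Two minor points to repair in a final write-up. First, your upper-bound derivation tacitly assumes $q < \infty$, since you need $\theta \in (0,1)$; for $q = \infty$ one has $E_{N,\infty}(a) = a_N$ and $\alpha = 1/p$, so the middle expression equals $\left\Vert a \right\Vert_{\ell^p}$ exactly and both inequalities are immediate --- you noted this case only for the lower bound, so add one sentence for the upper one. Second, the constant your argument yields is $C = \bigl(2^{2-\theta}/(2^{1-\theta}-1)\bigr)^{1/p}$, which depends on $q$ through $\theta$ and blows up as $q \downarrow p$. This is not a defect of your proof: testing $a_j = j^{-(1+\varepsilon)/p}$ and letting $\varepsilon \to 0$ shows the optimal constant grows like $\bigl((1-\theta)/\theta\bigr)^{-\theta/p}$ as $\theta \to 1$, so dependence on $q$ is unavoidable, and the lemma's phrase ``depending only on $p$'' is slightly too strong as stated; your bookkeeping is the honest outcome, and it suffices for the application in theorem \ref{thm:nonlin_approx}, where $p$ and $q$ are both fixed.
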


The asymptotic behavior of the error is answered by the next theorem, see again \cite{da_st_te3}.
\begin{thm}
  \label{thm:nonlin_approx}
  Let $1 \le p \le \infty$ and let $\{ \psi_{jk} : (j,k) \in J \times K \}$ be a frame for $M_m^p$.
  Let $p < q$, set $\alpha = 1/p - 1/q$, and let $f \in M_m^p$, then we have
  \begin{equation*}
    \biggl( \sum_{N=1}^\infty \frac{1}{N} \bigl( E_N(f)_{M_m^q} \bigr)^p \biggr)^{1/p} \le C \left\Vert f \right\Vert_{M_m^p}
  \end{equation*}
  for a constant $C > 0$.
\end{thm}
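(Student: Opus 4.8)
The plan is to transfer the entire statement to the level of the frame coefficients, where the best $N$-term error is governed by the tail of a decreasing rearrangement, and then to invoke the purely sequential Lemma \ref{lem:nonlin_approx_lem}. Fix $f \in M_m^p$ and expand it as in \eqref{eq:atom_simplified}, $f = \sum_{j,k} c_{jk}\psi_{jk}$, with unconditional convergence; by \eqref{eq:framebounds_simplified} the weighted sequence $(c_{jk}\,m(X_{jk}))_{j,k}$ lies in $\ell^p$ and satisfies $\|(c_{jk})\|_{\ell_m^p} \le A'\|f\|_{M_m^p}$. Let $(a_n)_{n \in \N}$ be the nonincreasing rearrangement of $(|c_{jk}|\,m(X_{jk}))_{j,k}$, so that $\|a\|_{\ell^p} = \|(c_{jk})\|_{\ell_m^p}$. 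Since $p < q$ gives $\ell_m^p \subset \ell_m^q$, the same coefficients also belong to $\ell_m^q$, and every tail sum below converges unconditionally in $M_m^q$ to the corresponding element.

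Next I would realize a concrete competitor by thresholding. Let $\mathcal{I}_N \subset J\times K$ index the $N$ largest of the weighted coefficients $|c_{jk}|\,m(X_{jk})$ and put $S_N = \sum_{(j,k)\in\mathcal{I}_N} c_{jk}\psi_{jk} \in \Sigma_N$. Then $f - S_N$ is the synthesis of the remaining coefficients, and the synthesis estimate \eqref{eq:framebounds_Ml} — which the frame bound theorem establishes for every exponent $1\le q\le\infty$ through the weighted Young inequality and Riesz--Thorin interpolation — yields
\begin{equation*}
  E_N(f)_{M_m^q} \le \|f - S_N\|_{M_m^q} \le C\Big(\sum_{n > N} a_n^q\Big)^{1/q} \le C\, E_{N,q}(a),
\end{equation*}
because the remaining weighted coefficients are precisely $a_{N+1}, a_{N+2}, \dots$ and $\sum_{n>N} a_n^q \le \sum_{n\ge N} a_n^q = E_{N,q}(a)^q$.

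Finally I would apply Lemma \ref{lem:nonlin_approx_lem} to the decreasing sequence $a$ with the given $p < q$ and $\alpha = 1/p - 1/q \ge 0$. Its upper bound, together with $N^\alpha \ge 1$, gives
\begin{equation*}
  \Big(\sum_{N=1}^\infty \tfrac1N\, E_{N,q}(a)^p\Big)^{1/p}
  \le \Big(\sum_{N=1}^\infty \tfrac1N\big(N^\alpha E_{N,q}(a)\big)^p\Big)^{1/p}
  \le C\|a\|_{\ell^p} = C\|(c_{jk})\|_{\ell_m^p} \le C A'\|f\|_{M_m^p}.
\end{equation*}
Combining this with the pointwise-in-$N$ bound $E_N(f)_{M_m^q} \le C\,E_{N,q}(a)$ from the previous step produces the asserted inequality. (Retaining the factor $N^\alpha$ in fact yields the sharper Jackson-type estimate with $N^\alpha E_N(f)_{M_m^q}$ on the left, which is presumably the intended formulation, as $\alpha$ is otherwise unused.)

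The main obstacle is the uniform synthesis bound: one needs $\big\|\sum_{(j,k)\in\Lambda} c_{jk}\psi_{jk}\big\|_{M_m^q} \le C\,\|(c_{jk})_{(j,k)\in\Lambda}\|_{\ell_m^q}$ to hold simultaneously for all (co)finite index sets $\Lambda$ and for the exponent $q$, with a constant independent of $\Lambda$. It is exactly this $q$-uniformity — supplied by the frame bound theorem via Riesz--Thorin — together with the unconditional convergence of the frame expansion that legitimizes splitting off the tail and measuring it in $M_m^q$. The only other point requiring care is that the thresholding must be performed on the \emph{weighted} coefficients $|c_{jk}|\,m(X_{jk})$, as dictated by the weighted norm $\ell_m^q$ entering the synthesis estimate; once this is fixed, the remainder is bookkeeping with the rearrangement.
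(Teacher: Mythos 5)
Your proposal is correct and follows essentially the same route as the paper's proof: expand $f$ in the frame, rearrange the weighted coefficients $|c_{jk}|\,m(X_{jk})$ decreasingly, bound $E_N(f)_{M_m^q}$ by the tail of the rearrangement via the synthesis estimate (the paper cites \eqref{eq:framebounds_simplified}, you more carefully cite \eqref{eq:framebounds_Ml} with exponent $q$ — same content), and conclude with Lemma \ref{lem:nonlin_approx_lem}. Your closing remark is also on point: the paper's own proof in fact estimates $\bigl(\sum_{N} \tfrac{1}{N} \bigl(N^\alpha E_N(f)_{M_m^q}\bigr)^p\bigr)^{1/p}$, i.e.\ the sharper Jackson-type bound with the factor $N^\alpha$ that the theorem statement omits.
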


\begin{proof}
  Write $f = \sum_{j,k} c_{jk} \psi_{jk}$ and let $\pi: \N \to J \times K$ be an enumeration of the index set such that
  \begin{equation*}
    \bigl| c_{\pi(1)}\, m \!\left(X_{\pi(1)} \right) \bigr| \ge \bigl| c_{\pi(2)}\, m \!\left( X_{\pi(2)} \right) \bigr| \ge \dots
  \end{equation*}
  is decreasing.
  Then we have
  \begin{equation*}
    E_N(f)_{M_m^q} \le \biggl\Vert \sum_{j = N+1}^\infty c_{\pi(j)}\, \psi_{\pi(j)} \biggr\Vert_{M_m^q},
  \end{equation*}
  and setting $\tilde{c} = ( \tilde{c}_j)_{j \in \N} = \left( c_{\pi(j)} m(X_{\pi(j)}) \right)_{j\in\N}$, we obtain by \eqref{eq:framebounds_simplified} that
  \begin{align*}
    E_N(f)_{M_m^q} &\le C' \biggl( \sum_{j = N+1}^\infty \left| \tilde{c}_j \right|^q \biggr)^{1/q}
    = C'\, E_{N+1,q} ( \tilde{c} )
    \le C'\, E_{N,q} ( \tilde{c} ).
  \end{align*}
  Applying lemma \ref{lem:nonlin_approx_lem} and \eqref{eq:framebounds_simplified} yields now
  \begin{align*}
    \biggl( \sum_{N=1}^\infty \frac{1}{N} \bigl( N^\alpha E_N(f) \bigr)^p \biggr)^{1/p}
    &\le \biggl( \sum_{N=1}^\infty \frac{1}{N} \bigl( N^\alpha C'\, E_{N,q} ( \tilde{c} ) \bigr)^p \biggr)^{1/p} \\
    &\le C'' \left\Vert \tilde{c} \right\Vert_{\ell^p}
    = C'' \left\Vert c \right\Vert_{\ell_m^p} \\
    &\le C \left\Vert f \right\Vert_{M_m^p}. \qedhere
  \end{align*}
\end{proof}

\appendix
\section{Appendix}

The following is a weighted version of the Young inequality, the proof of which can be found in \cite{da_st_te2}.
\begin{thm}
  \label{thm:young}
  Let $(X, \mathcal{A}, \nu)$ be a $\sigma$-finite measure space and $m : X \to \R^+$ a continuous weight function.
  Let $K$ be an $\mathcal{A} \otimes \mathcal{A}$-measurable function for which the integrals
  \begin{equation*}
    \int_X \left| K(x,y) \right| \frac{m(x)}{m(y)}\, \dd\nu(y) \quad \text{and} \quad \int_X \left| K(x,y) \right| \frac{m(x)}{m(y)}\, \dd\nu(x)
  \end{equation*}
  are bounded almost everywhere by a constant $C$.
  If $1 \le p \le \infty$, the operator $T$ given by
  \begin{equation*}
    Tf(x) = \int_X K(x,y) f(y) \, \dd\nu(y)
  \end{equation*}
  is well defined for $f \in \L_m^p$ and
  \begin{equation*}
    \left\Vert Tf \right\Vert_{\L_m^p} \le C \left\Vert f \right\Vert_{\L_m^p}.
  \end{equation*}
\end{thm}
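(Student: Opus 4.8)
The plan is to reduce the weighted statement to the classical (unweighted) \emph{Schur test} by absorbing the weight $m$ into the kernel. First I would substitute $g = fm$, so that $f \in \L_m^p$ if and only if $g \in \L^p(\nu)$ with $\left\Vert f \right\Vert_{\L_m^p} = \left\Vert g \right\Vert_{\L^p(\nu)}$, and introduce the modified kernel $\tilde{K}(x,y) = K(x,y)\, m(x)/m(y)$. A direct computation gives $m(x)\, Tf(x) = \int_X \tilde{K}(x,y) g(y)\, \dd\nu(y) =: \tilde{T}g(x)$, whence $\left\Vert Tf \right\Vert_{\L_m^p} = \left\Vert \tilde{T}g \right\Vert_{\L^p(\nu)}$. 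Crucially, the two hypotheses translate precisely into the symmetric Schur conditions $\int_X |\tilde{K}(x,y)|\, \dd\nu(y) \le C$ and $\int_X |\tilde{K}(x,y)|\, \dd\nu(x) \le C$ (a.e.\ in $x$, resp.\ in $y$). It therefore suffices to prove that an integral operator with kernel satisfying these two bounds is bounded on $\L^p(\nu)$ with operator norm at most $C$, for every $1 \le p \le \infty$.

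Next I would establish this unweighted estimate. The endpoint cases follow immediately from Tonelli's theorem: for $p = 1$, interchanging the order of integration and invoking the column bound gives $\left\Vert \tilde{T}g \right\Vert_1 \le C \left\Vert g \right\Vert_1$; for $p = \infty$, the row bound yields the pointwise estimate $|\tilde{T}g(x)| \le C \left\Vert g \right\Vert_\infty$. For the range $1 < p < \infty$, with conjugate exponent $q$, the key step is the Schur splitting $|\tilde{K}(x,y) g(y)| = |\tilde{K}(x,y)|^{1/q} \cdot \bigl(|\tilde{K}(x,y)|^{1/p} |g(y)|\bigr)$ followed by Hölder's inequality in $y$; the first factor contributes $\bigl(\int_X |\tilde{K}(x,y)|\, \dd\nu(y)\bigr)^{1/q} \le C^{1/q}$, leaving $|\tilde{T}g(x)|^p \le C^{p/q} \int_X |\tilde{K}(x,y)| |g(y)|^p\, \dd\nu(y)$. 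Integrating in $x$, applying Tonelli, and using the column bound then gives $\left\Vert \tilde{T}g \right\Vert_p^p \le C^{p/q + 1} \left\Vert g \right\Vert_p^p = C^p \left\Vert g \right\Vert_p^p$, since $p/q + 1 = p$. This delivers the bound in all cases.

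The point requiring care is well-definedness together with the measurability and integrability bookkeeping that legitimizes the interchanges of integration. The $\sigma$-finiteness of $(X, \mathcal{A}, \nu)$ and the $\mathcal{A} \otimes \mathcal{A}$-measurability of $K$ (hence of $\tilde{K}$) are exactly what render Tonelli applicable to the nonnegative integrands above; the resulting finite bounds then show $\int_X |\tilde{K}(x,y)| |g(y)|\, \dd\nu(y) < \infty$ for a.e.\ $x$, so that $\tilde{T}g(x)$, and therefore $Tf(x)$, is defined as an absolutely convergent integral almost everywhere. I would also remark that the hypotheses only demand the two bounds almost everywhere, which is harmless, as the excluded null sets affect none of the $\L^p$ norms involved. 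No genuine analytic obstacle arises here; the entire content lies in recognizing that the weighted kernel condition is precisely the Schur condition for $\tilde{K}$, after which the argument is the standard interpolation-free Schur/Young computation.
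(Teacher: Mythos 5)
Your proof is correct, and it is worth noting that the paper itself gives no proof of this theorem at all: it is stated in the appendix with the proof deferred to the reference [DST04b] (Dahlke--Steidl--Teschke). In that source, and in keeping with how the present paper handles the analogous situations in the frame-bounds section, the natural route is to verify the two endpoint cases $p = 1$ and $p = \infty$ and then invoke the weighted Riesz--Thorin theorem (stated here as theorem \ref{thm:riesz_thorin}), which yields the intermediate bound $t_1^{1/p} t_\infty^{(p-1)/p} \le C$. Your argument replaces the interpolation step by the classical Schur splitting $|\tilde{K}g| = |\tilde{K}|^{1/q} \cdot \bigl( |\tilde{K}|^{1/p} |g| \bigr)$ with Hölder in $y$, after the clean reduction $g = fm$, $\tilde{K}(x,y) = K(x,y)\, m(x)/m(y)$ that turns the weighted hypotheses into the exact unweighted Schur conditions. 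This buys you a fully self-contained, interpolation-free proof with the same constant $C$ for every $1 \le p \le \infty$, and your bookkeeping is sound: $\sigma$-finiteness and joint measurability legitimize the Tonelli interchanges, the a.e.\ finiteness of $\int |\tilde{K}(x,y)||g(y)|\,\dd\nu(y)$ gives well-definedness of $Tf$, and the exponent arithmetic $p/q + 1 = p$ delivers $C^p$ exactly. The only trade-off is that the interpolation route is slightly more modular in settings where the two endpoint norms genuinely differ ($t_1 \ne t_\infty$), in which case Riesz--Thorin gives the finer bound $t_1^{1/p} t_\infty^{(p-1)/p}$ rather than $\max(t_1, t_\infty)$; under the symmetric hypothesis here, with both integrals bounded by the same $C$, the two approaches coincide in strength.
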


We also include a version of the Riesz-Thorin theorem for weighted function spaces, see again \cite{da_st_te2}.
Here $\L_m^p$ and $\ell_m^p$ are interchangeable.

\begin{thm}
  \label{thm:riesz_thorin}
  Let $T$ be a bounded linear operator from $\L_m^1$ into $\ell_m^1$ with norm $t_1$ and from $\L_m^\infty$ into $\ell_m^\infty$ with norm $t_\infty$.
  Then, for every $1 < p < \infty$, the operator $T$ is also bounded from $\L_m^p$ into $\ell_m^p$ with norm $t_1^{1/p} t_\infty^{(p-1)/p}$.
\end{thm}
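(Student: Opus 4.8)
The plan is to reduce the weighted statement to the classical, unweighted Riesz--Thorin interpolation theorem by absorbing the weight into isometric identifications. The key observation is that $\ell_m^p$ is itself a weighted $\L^p$-space, namely $\L^p$ over the index set $J \times K$ with counting measure, so once the weights on both sides are stripped away the assertion becomes the standard interpolation statement between $\L^p(\xi)$ (where $\xi = \mu \otimes \nu$) and a sequence space.

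First I would introduce two families of isometries. On the domain, let $U : \L_m^p \to \L^p(\xi)$ be given by $U f = m \cdot f$; since $\| U f \|_{\L^p(\xi)} = \| m f \|_{\L^p(\xi)} = \| f \|_{\L_m^p}$, this is an isometric isomorphism for every $1 \le p \le \infty$. On the sequence side, let $V : \ell_m^p \to \ell^p$ act by $(V c)_{jk} = m(X_{jk})\, c_{jk}$, which is likewise isometric for every $p$, the case $p = \infty$ included. Setting $\tilde{T} = V \circ T \circ U^{-1} : \L^p(\xi) \to \ell^p$, the endpoint hypotheses transfer verbatim: because $U$ and $V$ are isometries, $\tilde{T}$ is bounded from $\L^1(\xi)$ to $\ell^1$ with norm $t_1$ and from $\L^\infty(\xi)$ to $\ell^\infty$ with norm $t_\infty$. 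Applying the classical Riesz--Thorin theorem to $\tilde{T}$ with $p_0 = q_0 = 1$, $p_1 = q_1 = \infty$ and interpolation parameter $\theta = (p-1)/p$ yields boundedness from $\L^p(\xi)$ to $\ell^p$ with norm at most $t_1^{1-\theta} t_\infty^{\theta} = t_1^{1/p} t_\infty^{(p-1)/p}$. Transferring back through the isometries gives, for every $f \in \L_m^p$,
\begin{equation*}
  \| T f \|_{\ell_m^p} = \| V T f \|_{\ell^p} = \| \tilde{T} (U f) \|_{\ell^p} \le t_1^{1/p} t_\infty^{(p-1)/p} \| U f \|_{\L^p(\xi)} = t_1^{1/p} t_\infty^{(p-1)/p} \| f \|_{\L_m^p},
\end{equation*}
which is exactly the claimed bound.

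With this reduction the substance of the proof is entirely contained in the classical interpolation theorem, which may simply be cited; there is no genuine obstacle in the isometric transfer beyond checking that the identifications preserve norms at both endpoints (the only mildly delicate point being the $p = \infty$ endpoint of $V$). The step I expect to be the real work, should a self-contained argument be preferred, is the complex-interpolation proof run directly on $\tilde{T}$. By duality, for $1 < p < \infty$,
\begin{equation*}
  \| \tilde{T} F \|_{\ell^p} = \sup \bigl\{ | \langle \tilde{T} F, g \rangle | : \| g \|_{\ell^{p'}} \le 1,\ g \text{ finitely supported} \bigr\},
\end{equation*}
and for a simple $F$ with $\| F \|_{\L^p(\xi)} = 1$ one introduces the analytic families $F_z = |F|^{p(1-z)} \operatorname{sgn} F$ and $g_z = |g|^{p' z} \operatorname{sgn} g$, normalized so that $F_\theta = F$ and $g_\theta = g$ at $\theta = (p-1)/p$. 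Then $\Phi(z) = \langle \tilde{T} F_z, g_z \rangle$ is bounded and holomorphic on the strip $0 \le \Re z \le 1$, and the Hadamard three-lines lemma delivers $| \Phi(\theta) | \le t_1^{1-\theta} t_\infty^{\theta}$. The delicate points in this route are the correct normalization of the exponents (so that the boundary values lie in $\L^1, \ell^\infty$ on $\Re z = 0$ and in $\L^\infty, \ell^1$ on $\Re z = 1$), the boundedness and holomorphy of $\Phi$ on the closed strip, and the density of simple functions in $\L_m^p$ for finite $p$; the endpoints $p = 1, \infty$ are hypotheses and require no interpolation.
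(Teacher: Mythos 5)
Your proof is correct, but there is nothing in the paper to compare it against: the theorem is stated in the appendix without proof, deferred entirely to the citation \cite{da_st_te2}, so your argument should be judged on its own merits. The transference reduction works, and the one point worth making explicit is the reason it works: since $m$ is continuous and strictly positive, multiplication by $m$ (respectively by $\bigl(m(X_{jk})\bigr)_{j,k}$) is a bijective isometry $\L_m^p \to \L^p(\xi)$ (respectively $\ell_m^p \to \ell^p$) \emph{simultaneously for all} $1 \le p \le \infty$, and this $p$-independence of $U$ and $V$ is precisely what makes $\tilde{T} = V T U^{-1}$ a single well-defined linear operator on $\L^1(\xi) + \L^\infty(\xi)$, consistent on the intersection (the same consistency the theorem implicitly assumes of $T$ itself), so that the classical Riesz--Thorin theorem applies with $1/p = 1-\theta$, i.e.\ $\theta = (p-1)/p$, yielding exactly $t_1^{1/p} t_\infty^{(p-1)/p}$. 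Compared with running the three-lines argument directly on the weighted spaces (presumably the route of the cited source), your reduction buys brevity and offloads all analytic work onto a standard theorem, at no loss of generality; your sketch of the direct route is also correctly normalized, since $F_z = |F|^{p(1-z)} \operatorname{sgn} F$ has $\bigl\Vert F_{iy} \bigr\Vert_{\L^1} = \left\Vert F \right\Vert_{\L^p}^p$ on $\Re z = 0$ and $\bigl\Vert F_{1+iy} \bigr\Vert_{\L^\infty} \le 1$ on $\Re z = 1$, with $F_\theta = F$ exactly at $\theta = (p-1)/p$, and dually for $g_z$. Two pedantic remarks: the conclusion should be read as ``with norm \emph{at most} $t_1^{1/p} t_\infty^{(p-1)/p}$'', since interpolation gives an upper bound rather than equality; and the $p=\infty$ endpoint of $V$ that you flag as delicate is in fact immediate, as $\sup_{j,k} |c_{jk}|\, m(X_{jk})$ is the definition of the $\ell_m^\infty$ norm.
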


\subsection{The Helgason transform}

For convenience, we give an outline of some points needed along the way of deriving the Helgason-Fourier transform as an $\L^2$-isometry.
We begin with two properties of the Poisson kernel $P_{\lambda, \zeta}$ (cf.\ chapter 3 in \cite{rudin_ball}).

\begin{prop}
  \label{prop:add_thm}
  \hspace{0ex}
  \begin{enumerate}[(i)]
    \item Let $g \in \L^1(S,\sigma)$ and $z \in B$. Then
      \begin{equation}
        \int_S g(\varphi_z(\zeta))\, \dd\sigma(\zeta)
	= \int_S P_{0,\zeta}(z)^2 g(\zeta)\, \dd\sigma(\zeta).
        \label{eq:poiss_inv}
      \end{equation}
    \item (Addition theorem) For all $\lambda \in \C$, $\zeta \in S$ and $z,w \in B$,
      \begin{equation}
	P_{\lambda,\zeta}(\varphi_w(z)) = P_{\lambda, \zeta}(w) P_{\lambda, \varphi_w(\zeta)}(z).
        \label{eq:add_thm}
      \end{equation}
  \end{enumerate}
\end{prop}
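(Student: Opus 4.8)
The plan is to treat the two parts separately: the addition theorem \eqref{eq:add_thm} is a self-contained algebraic identity that follows directly from the transformation rule \eqref{eq:moeb_prod}, whereas \eqref{eq:poiss_inv} is the change-of-variables formula for the boundary map $\varphi_z|_S$, whose Jacobian turns out to be the invariant Poisson kernel $P_{0,\zeta}(z)^2$; the latter is classical (cf.\ \cite{rudin_ball}, ch.~3). Throughout I would abbreviate the (positive, real) base of the Poisson kernel by
\[
  Q(z,\zeta) = \frac{1-|z|^2}{\bigl|1-\langle z,\zeta\rangle\bigr|^2},
  \qquad P_{\lambda,\zeta}(z) = Q(z,\zeta)^{(n+i\lambda)/2},
\]
so that, since $Q>0$ and the exponent $(n+i\lambda)/2$ is common to every factor, \eqref{eq:add_thm} is equivalent to the exponent-free identity $Q(\varphi_w(z),\zeta) = Q(w,\zeta)\,Q(z,\varphi_w(\zeta))$.

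For \eqref{eq:add_thm} I would verify this last identity in four short steps. First, taking both arguments equal to $z$ in \eqref{eq:moeb_prod} (with parameter $a=w$) gives $1-|\varphi_w(z)|^2 = (1-|w|^2)(1-|z|^2)\,\bigl|1-\langle z,w\rangle\bigr|^{-2}$. Second, writing $\eta = \varphi_w(\zeta)$ and using that $\varphi_w$ is an involution (Theorem \ref{thm:moeb_basics}), so that $\zeta = \varphi_w(\eta)$, I would apply \eqref{eq:moeb_prod} to the arguments $z$ and $\eta$ to obtain $1-\langle\varphi_w(z),\zeta\rangle = (1-|w|^2)(1-\langle z,\eta\rangle)\bigl[(1-\langle z,w\rangle)(1-\langle w,\eta\rangle)\bigr]^{-1}$. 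Dividing the first expression by the modulus-square of the second collapses the common factors and leaves $Q(\varphi_w(z),\zeta) = (1-|z|^2)\bigl|1-\langle w,\eta\rangle\bigr|^2\bigl[(1-|w|^2)\bigl|1-\langle z,\eta\rangle\bigr|^2\bigr]^{-1}$. The third, decisive step is the scalar identity $(1-\langle w,\zeta\rangle)(1-\langle w,\eta\rangle) = 1-|w|^2$, which I would get from \eqref{eq:moeb_prod} by taking the arguments $o$ and $\eta$ and using $\varphi_w(o)=w$ together with $\langle o,\cdot\rangle = 0$. Substituting its modulus-square into the previous line matches $Q(w,\zeta)\,Q(z,\eta)$ exactly, which proves \eqref{eq:add_thm}.

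For \eqref{eq:poiss_inv} I would substitute $\zeta' = \varphi_z(\zeta)$; since $\varphi_z$ is an involutive homeomorphism of $S$ (Theorem \ref{thm:moeb_basics}), this reduces the claim to showing that the surface Jacobian of $\varphi_z|_S$ at $\zeta$ equals $P_{0,\zeta}(z)^2 = \bigl((1-|z|^2)/|1-\langle z,\zeta\rangle|^2\bigr)^n$. There are two natural routes. One is to compute this $(2n-1)$-dimensional boundary Jacobian directly, descending from the real $2n$-dimensional Jacobian $\bigl((1-|z|^2)/|1-\langle w,z\rangle|^2\bigr)^{n+1}$ of $\varphi_z$ on $B$ (itself consistent with the invariance of $\mu$ via \eqref{eq:moeb_prod}); this is essentially the transformation rule for $\sigma$ recorded in \cite{rudin_ball}. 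The other, which I find cleaner, uses that $\LB$ commutes with $\Aut(B)$: for $u$ continuous on $\overline B$ with $\LB u = 0$, the function $u\circ\varphi_z$ is again annihilated by $\LB$, so evaluating the invariant Poisson representation at the origin (where $P_{0,\zeta}(o)^2=1$) gives $\int_S u(\varphi_z(\zeta))\,\dd\sigma(\zeta) = (u\circ\varphi_z)(o) = u(z) = \int_S P_{0,\zeta}(z)^2 u(\zeta)\,\dd\sigma(\zeta)$; density of such boundary values in $C(S)$, and then in $\L^1(S,\sigma)$, yields \eqref{eq:poiss_inv}.

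I expect the computation for \eqref{eq:add_thm} to be routine once the involution substitution $\zeta=\varphi_w(\eta)$ is made, the only mild subtlety being the bookkeeping of conjugates in $|1-\langle z,w\rangle|^2 = (1-\langle z,w\rangle)(1-\langle w,z\rangle)$. The genuine obstacle lies in \eqref{eq:poiss_inv}: identifying the boundary Jacobian with the invariant Poisson kernel is exactly the point where one must invoke the harmonic-analytic machinery of the ball (either the explicit surface-measure transformation or the reproducing property of the invariant Poisson integral), and it is this input, rather than any new computation, on which the statement really rests.
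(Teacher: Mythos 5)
The paper offers no proof of this proposition at all: it is stated as a known fact with a pointer to chapter~3 of \cite{rudin_ball} (the transformation rule for $\sigma$ and the identities underlying the addition theorem are Rudin's). Your proposal therefore supplies an argument where the paper supplies a citation, and it is correct. Your derivation of \eqref{eq:add_thm} is complete and checks out line by line: reducing to the exponent-free identity $Q(\varphi_w(z),\zeta)=Q(w,\zeta)Q(z,\varphi_w(\zeta))$ is legitimate because $Q>0$ and $(AB)^c=A^cB^c$ for positive reals and any $c\in\C$; the three applications of \eqref{eq:moeb_prod} (with argument pairs $(z,z)$, $(z,\eta)$, and $(o,\eta)$, using $\varphi_w(o)=w$ and the involution $\zeta=\varphi_w(\eta)$, all valid on $\overline{B}$ by the stated range of \eqref{eq:moeb_prod}) combine exactly as you claim, and this is essentially how the identity is established in Rudin. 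For \eqref{eq:poiss_inv}, both of your routes are classical and sound; in the second one, two inputs deserve explicit mention rather than a wave: (a) you need solvability and uniqueness of the Dirichlet problem for $\LB$ with continuous boundary data (invariant Poisson integral plus the maximum principle), so that in fact \emph{every} $g\in C(S)$ occurs as a boundary value --- this is stronger than the density you invoke, and is what you actually use; and (b) the passage from $C(S)$ to $\L^1(S,\sigma)$ is cleanest not by norm-density (continuity of $g\mapsto\int_S g\circ\varphi_z\,\dd\sigma$ on $\L^1$ is not known a priori without the formula itself) but by observing that the pushforward measure $(\varphi_z)_*\sigma$ and $P_{0,\zeta}(z)^2\,\dd\sigma(\zeta)$ are finite Borel measures agreeing on $C(S)$, hence equal by Riesz representation, after which the $\L^1$ statement is immediate. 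These are routine repairs, not gaps; with them your write-up is a self-contained substitute for the paper's external reference.
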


Let $f * g$ denote the commutative convolution of $f$ and $g$ on $B$ given by
\begin{equation}
  \label{eq:convolution_B}
  (f * g)(z) = \int_B f(w) g(\varphi_z(w)) \, \dd\mu(w).
\end{equation}
Then, as a consequence of \eqref{eq:add_thm},
\begin{equation}
  \widehat{(f * g)}(\lambda,\zeta) = \hat{f}(\lambda,\zeta) \hat{g}(\lambda)
  \label{eq:FT_conv}
\end{equation}
holds for every $f \in \D(B)$ and $g \in \D_\natural(B)$.

For a function $f \in \D(B)$ its \emph{radialization} is defined as
\begin{equation*}
  f^\natural (r) = \int_S f(r\omega) \, \dd\sigma(\omega)
  = \int_{\U(n)} f(u(r\zeta)) \, \dd u.
\end{equation*}
Fix $z \in B$ and set $f_z = (f \circ \varphi_z)^\natural$.
Then $f_z \in \D_\natural(B)$ and the inversion formula \eqref{eq:sph_inversion} for the spherical transform reads
\begin{equation}
  f(z) = f_z(o) = \frac{1}{2} \int_\R \hat{f_z}(\lambda) \left| \hc(\lambda) \right|^{-2} \dd\lambda
  \label{eq:helg_pre_inv}
\end{equation}
since $\phi_\lambda(o) = 1$.
Moreover, since $\mu$ is invariant under $\U(n)$, we have
\begin{align*}
  \int_S \hat{f}(\lambda,\zeta) \, \dd\sigma(\zeta)
  &= \int_B f(w) \phi_\lambda(w) \, \dd\mu(w) \\
  &= \int_B \int_{\U(n)} f(w) \phi_\lambda(u(w)) \, \dd u \, \dd\mu(w) \\
  &= \int_B f^\natural (w) \phi_\lambda(w) \dd\mu(w)
  = \widehat{f^\natural}(\lambda).
\end{align*}
Replacing $f$ by $(f \circ \varphi_z)$ we obtain by Proposition \ref{prop:add_thm}
\begin{align*}
  \hat{f_z}(\lambda) = (f \circ \varphi_z)\widehat{\,} (\lambda)
  &= \int_B f(\varphi_z(w)) \phi_{-\lambda}(w) \, \dd\mu(w) \\
  &= \int_S \int_B f(\varphi_z(w)) P_{-\lambda,\zeta}(w)\, \dd\mu(w)\, \dd\sigma(\zeta) \\
  &= \int_S \int_B f(w) P_{-\lambda,\zeta}(\varphi_z(w))\, \dd\mu(w)\, \dd\sigma(\zeta) \\
  &= \int_S \int_B f(w) P_{-\lambda,\varphi_z(\zeta)}(w) P_{-\lambda,\zeta}(z) \, \dd\mu(w)\, \dd\sigma(\zeta) \\
  &= \int_S \int_B f(w) P_{-\lambda,\zeta}(w) P_{-\lambda,\zeta}(z) P_{0,\zeta}(z)^2\, \dd\mu(w)\, \dd\sigma(\zeta) \\
  &= \int_S \int_B f(w) P_{-\lambda,\zeta}(w) P_{\lambda,\zeta}(z)\, \dd\mu(w)\, \dd\sigma(\zeta) \\
  &= \int_S \hat{f}(\lambda,\zeta) P_{\lambda,\zeta}(z)\, \dd\sigma(\zeta).
\end{align*}
Returning to \eqref{eq:helg_pre_inv}, we obtain the inversion formula
\begin{equation}
  f(z) = \frac{1}{2} \int_\R \int_S \hat{f}(\lambda, \zeta) P_{\lambda,\zeta} (z) \, \dd\sigma(\zeta) \left| \hc(\lambda) \right|^{-2} \dd\lambda
\label{eq:helg_inv}
\end{equation}
for the Helgason-Fourier transform.

\begin{prop}
  \label{thm:F_orth}
  \hspace{0ex}
  \begin{enumerate}[(i)]
    \item Let $\lambda \notin i\Z_{\ge n}$ and $g \in \L^2(S,\sigma)$.
      Then
    \begin{equation}
      \int_S g(\zeta) P_{\lambda,\zeta}(z) \, \dd\sigma(\zeta) = 0 \qquad \text{for all $z \in B$}
      \label{eq:F_orth}
    \end{equation}
    implies $g = 0$.
    \item Let $-\lambda \notin i\Z_{\ge n}$. Then the functions
      \begin{equation*}
        \zeta \mapsto \hat{f}(\lambda,\zeta), \qquad f \in \D(B)
      \end{equation*}
      form a dense subset of $\L^2(S,\sigma)$.
  \end{enumerate}
\end{prop}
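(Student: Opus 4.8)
The plan is to establish \textit{(i)} by expanding the Poisson kernel into monomials, and then to deduce \textit{(ii)} from \textit{(i)} by a Fubini--duality argument. For \textit{(i)}, fix $z \in B$. Since $|\langle z,\zeta\rangle| \le |z| < 1$ for $\zeta \in S$, the binomial series
\[
  (1 - \langle z,\zeta\rangle)^{-\alpha} = \sum_{p \ge 0} \frac{\fup{\alpha}{p}}{p!}\, \langle z,\zeta\rangle^p, \qquad \alpha = \tfrac{n+i\lambda}{2},
\]
and the analogous one for $(1-\langle\zeta,z\rangle)^{-\alpha}$ converge absolutely and uniformly in $\zeta$. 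Multiplying the two, expanding each power $\langle z,\zeta\rangle^p$, $\langle\zeta,z\rangle^q$ into monomials, and separating the dependence on $z$ from that on $\zeta$, I obtain an expansion $P_{\lambda,\zeta}(z) = (1-|z|^2)^\alpha \sum_{a,b} C_{a,b}\, z^a \bar z^b\, \zeta^b \bar\zeta^a$, summed over multi-indices $a,b$, where $C_{a,b} = \frac{\fup{\alpha}{|a|}\,\fup{\alpha}{|b|}}{|a|!\,|b|!}\binom{|a|}{a}\binom{|b|}{b}$.

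Inserting this into the hypothesis and dividing by the nonvanishing factor $(1-|z|^2)^\alpha$, uniform convergence permits termwise integration, so that \eqref{eq:F_orth} becomes $\sum_{a,b} C_{a,b}\, m_{a,b}\, z^a \bar z^b = 0$ for all $z \in B$, with $m_{a,b} = \int_S g(\zeta)\, \zeta^b \bar\zeta^a\, \dd\sigma(\zeta)$. A convergent power series in $(z,\bar z)$ vanishing on the open set $B$ has all its coefficients equal to zero, hence $C_{a,b}\, m_{a,b} = 0$ for every $a,b$. This is the step where the hypothesis enters: $\lambda \notin i\Z_{\ge n}$ forces $\alpha \notin \Z_{\le 0}$ (were $\alpha$ a non-positive integer, then $\lambda \in i\{n, n+2, \dots\} \subset i\Z_{\ge n}$), so every factor $\fup{\alpha}{|a|}, \fup{\alpha}{|b|}$ is nonzero and $C_{a,b} \ne 0$. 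Therefore $m_{a,b} = 0$ for all $a,b$. Since the monomials $\zeta^b \bar\zeta^a$ are total in $\L^2(S,\sigma)$ by Stone--Weierstrass, approximating $\bar g$ by such monomials gives $\left\Vert g \right\Vert^2_{\L^2(S)} = 0$, i.e.\ $g = 0$.

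For \textit{(ii)}, suppose $g \in \L^2(S,\sigma)$ is orthogonal to every $\hat f(\lambda,\cdot)$, $f \in \D(B)$. Writing $\hat f(\lambda,\zeta) = \int_B f(w) P_{-\lambda,\zeta}(w)\,\dd\mu(w)$ and using that $f$ has compact support in $B$---so that $P_{-\lambda,\zeta}(w)$ is bounded on $\supp f \times S$---Fubini's theorem gives $0 = \int_B f(w)\, h(w)\,\dd\mu(w)$ with $h(w) = \int_S P_{-\lambda,\zeta}(w)\,\overline{g(\zeta)}\,\dd\sigma(\zeta)$. As $h$ is continuous on $B$ and this holds for all $f \in \D(B)$, we conclude $h \equiv 0$, i.e.\ $\int_S \overline{g(\zeta)}\, P_{-\lambda,\zeta}(w)\,\dd\sigma(\zeta) = 0$ for every $w \in B$. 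Since $-\lambda \notin i\Z_{\ge n}$, part \textit{(i)} applied to the parameter $-\lambda$ and the function $\bar g$ yields $\bar g = 0$, hence $g = 0$; thus the orthogonal complement of $\{\hat f(\lambda,\cdot) : f \in \D(B)\}$ is trivial and the range is dense.

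The main obstacle is the explicit monomial expansion of the Poisson kernel in \textit{(i)}: carrying out the product of the two binomial series and verifying that the resulting coefficients $C_{a,b}$ factor through the rising factorials $\fup{\alpha}{|a|}$, $\fup{\alpha}{|b|}$, so that their nonvanishing is governed precisely by $\alpha \notin \Z_{\le 0}$. This computation parallels the one leading to \eqref{eq:spher_hypergeom} and can be referenced to \cite{rudin_ball}. The remaining ingredients---uniform convergence with termwise integration, uniqueness of power-series coefficients, totality of the monomials, and the Fubini duality in \textit{(ii)}---are routine.
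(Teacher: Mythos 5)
Your proof is correct, and for part \textit{(i)} it takes a genuinely different route from the paper's. The paper substitutes $z=(\tanh r)\omega$ and repeatedly differentiates the resulting kernel $(\cosh r-(\sinh r)\langle\omega,\zeta\rangle)^{-2\alpha}$ at $r=0$: via the recursion with leading coefficients $c_k^k=(-1)^k\fup{\alpha}{k}\ne 0$ and an induction over the rotated directions $e^{2i\pi/2^j}\omega$, it shows that every monomial $\langle\omega,\zeta\rangle^{p-k}\overline{\langle\omega,\zeta\rangle}^{k}$ is orthogonal to $g$, and then invokes Stone--Weierstrass. You instead factor $|1-\langle z,\zeta\rangle|^{-2\alpha}=(1-\langle z,\zeta\rangle)^{-\alpha}(1-\langle\zeta,z\rangle)^{-\alpha}$ (legitimate for the principal branch, since $\Re(1-\langle z,\zeta\rangle)>0$), expand both binomial series, integrate termwise, and identify coefficients of the resulting power series in $(z,\bar z)$; the hypothesis enters at exactly the same point in both arguments, namely through $\fup{\alpha}{k}\ne 0$ for $\alpha\notin\Z_{\le 0}$, and both conclude with Stone--Weierstrass applied to the self-adjoint, point-separating algebra spanned by the monomials. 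Your version buys a more transparent, global computation --- no recursion for the $c_j^k$, no induction on $p$ --- and makes visible that \eqref{eq:F_orth} is equivalent to the vanishing of all moments $m_{a,b}$; the paper's version avoids justifying the rearrangement and termwise integration of a double series. That justification is easy here, but one small point deserves care: with only the bound $|m_{a,b}|\le \Vert g\Vert_{\L^1(S,\sigma)}$, absolute convergence of the rearranged series is guaranteed on $|z|<1/\sqrt{n}$ rather than on all of $B$ (after integration the factor $\sum_{|a|=p}\binom{|a|}{a}|z^a|=(|z_1|+\dots+|z_n|)^p$ is no longer tempered by $\bar\zeta^a$), which is all you need, since vanishing of the series near the origin already kills every coefficient. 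Part \textit{(ii)} is the same duality argument as the paper's, except that the paper concludes the vanishing of the inner integral from density of $\D(B)$ in $\L^2(B,\mu)$, whereas your appeal to continuity of $h$ and testing against all of $\D(B)$ is, if anything, slightly more careful, since the inner integral need not belong to $\L^2(B,\mu)$ globally.
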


\begin{proof}
  \textit{(i)} We substitute $z = (\tanh r) \omega$ ($r \in \R$, $\omega \in S$), and $\alpha = (n+i\lambda)/2$.
  Then \eqref{eq:F_orth} is equivalent to
  \begin{equation*}
    \int_S \left( \cosh r - (\sinh r)\! \left< \omega, \zeta \right>  \right)^{-2\alpha} F(\zeta)\, \dd\sigma(\zeta) = 0,
  \end{equation*}
  which holds independently of $r$.
  A function $h$ on $S$ shall be called $F$-orthogonal, if $\int_S h \, F\, \dd\sigma = 0$.
  Setting
  \begin{equation*}
    g_p(\omega, \zeta) = \Bigl[ \partial_r^p \bigl( \cosh  r - (\sinh r) \! \left< \omega, \zeta \right> \bigr)^{-2\alpha} \Bigr]_{r = 0},
  \end{equation*}
  the functions $\zeta \mapsto g_p(w, \zeta)$ are $F$-orthogonal for every $p \in \N_0$ and $\omega \in S$.
  Now $g_p$ can be calculated using
  \begin{equation*}
    \partial_r^k \left( \cosh r - (\sinh r) z \right)^{-\alpha} = \sum_{j=0}^k c_j^k \left( \cosh r - (\sinh r) z \right)^{-\alpha-j} \left( \sinh r - (\cosh r) z \right)^j,
  \end{equation*}
  where $c_j^k$ are dependent on $\alpha, j, k$ and $c_k^k = (-1)^k \fup{\alpha}{k} \ne 0$, and the Leibniz rule, resulting in a sum
  \begin{equation*}
    g_p (\omega, \zeta) = \sum_{k=0}^p \sum_{l,j \in \N_0} c_l^{p-k} c_j^{k} (-1)^{l+j} \left< \omega, \zeta \right>^l \overline{\left< \omega, \zeta \right>}^j
  \end{equation*}
  (setting $c_j^k = 0$ if $j > k$).
  We thus obtain $g_0(\omega, \zeta) = 1$ and $g_1(\omega, \zeta) = -\alpha \bigl(\left< \omega, \zeta \right> + \overline{\left< \omega, \zeta \right>} \bigr)$.
  We observe that $\left< \omega, \zeta \right>$ and $\overline{\left< \omega, \zeta \right>}$ are both linear combinations of $g_1(\omega,\zeta)$ and $g_1(i\omega, \zeta)$, and thereby $F$-orthogonal.
  In fact, every monomial
  \begin{equation*}
    \left< \omega, \zeta \right>^{p-k} \overline{\left< \omega, \zeta \right>}^{k}, \quad 0 \le k \le p,\, \omega \in S
  \end{equation*}
  is a linear combination of the functions $g_p(e^{2i\pi/2^j} \omega, \zeta)$, $j = 0, \dots, p$.
  By induction on $p$, these generate an $F$-orthogonal self-adjoint algebra, which separates the points in $S$, and therefore lies dense in $C(S)$ by the Stone-Weierstrass theorem.
  As a consequence, $F = 0$.

  \textit{(ii)} Suppose $F \in \L^2(S,\sigma)$ satisfies
  \begin{equation*}
    0 = \int_S \hat{f}(\lambda, \zeta) F(\zeta) \, \dd\sigma(\zeta) = \int_B f(z) \biggl( \int_S P_{-\lambda,\zeta}(z) F(\zeta)\, \dd\sigma(\zeta) \biggr) \dd\mu(z),
  \end{equation*}
  then the inner integral vanishes, since $\D(B)$ lies dense in $\L^2(B,\mu)$.
  By \textit{(i)}, $F = 0$.
\end{proof}

\begin{thm}
  \label{thm:FT_isometry}
  The Helgason Fourier transform extends to an isometry
  \begin{equation}
    \L^2(B,\mu) \longrightarrow \L^2(\R^+ {\times}\, S, \left| \hc(\lambda) \right|^{-2} \dd\sigma\, \dd\lambda)
    \label{eq:FT_isometry}
  \end{equation}
\end{thm}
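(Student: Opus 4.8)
The plan is to read off the isometry from the Plancherel identity on a dense subspace, extend by continuity, and then settle surjectivity with the help of the density statements in Proposition~\ref{thm:F_orth}.

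First I would treat $f \in \D(B)$. For such $f$ the inversion formula \eqref{eq:helg_inv} is available; multiplying it by $\overline{f(z)}$ and integrating against $\dd\mu(z)$, and using $\overline{P_{\lambda,\zeta}(z)} = P_{-\lambda,\zeta}(z)$ for real $\lambda$ (so that $\int_B \overline{f}\, P_{\lambda,\zeta}\, \dd\mu = \overline{\hat f(\lambda,\zeta)}$), produces exactly the Plancherel identity \eqref{eq:helg_plancherel}. Its second line reads $\int_B |f|^2\,\dd\mu = \int_{\R^+}\int_S |\hat f(\lambda,\zeta)|^2 \left| \hc(\lambda) \right|^{-2}\dd\sigma(\zeta)\,\dd\lambda$, i.e.\ $f \mapsto \hat f$ preserves the $\L^2$-norm on $\D(B)$. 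Since $\D(B)$ is dense in $\L^2(B,\mu)$ and the target space is complete, the transform extends uniquely to an isometry $\L^2(B,\mu) \to \L^2(\R^+ {\times} S, \left| \hc(\lambda) \right|^{-2}\dd\sigma\,\dd\lambda)$, with $\hat f$ defined as the limit of $\hat{f_k}$ along any $\D(B)$-approximation $f_k \to f$; polarization upgrades norm preservation to preservation of inner products.

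To see that the isometry is onto I would show that its range, which is closed, is also dense. Suppose $G$ lies in the orthogonal complement of the range. The decisive step is to decouple the radial variable $\lambda$ from the angular variable $\zeta$ via the convolution theorem \eqref{eq:FT_conv}: for every $f \in \D(B)$ and every radial $k \in \D_\natural(B)$ one has $\widehat{f*k}(\lambda,\zeta) = \hat f(\lambda,\zeta)\,\hat k(\lambda)$, so orthogonality of $G$ gives $\int_{\R^+} \hat k(\lambda)\,\Phi_f(\lambda)\,\left| \hc(\lambda) \right|^{-2}\dd\lambda = 0$, where $\Phi_f(\lambda) = \int_S \hat f(\lambda,\zeta)\,\overline{G(\lambda,\zeta)}\,\dd\sigma(\zeta)$. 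Because the spherical transforms $\hat k$ ($k \in \D_\natural(B)$) are dense in $\L^2(\R^+, \left| \hc(\lambda) \right|^{-2}\dd\lambda)$ by the classical one-dimensional Plancherel theorem for the spherical transform, this forces $\Phi_f = 0$ for each fixed $f$. Choosing a countable family $(f_m) \subset \D(B)$ whose transforms $\hat{f_m}(\lambda,\cdot)$ remain dense in $\L^2(S,\sigma)$, we obtain a common conull set of $\lambda$ on which $G(\lambda,\cdot)$ is orthogonal to a dense subset of $\L^2(S,\sigma)$; here $-\lambda \notin i\Z_{\ge n}$ holds for every real $\lambda$, so Proposition~\ref{thm:F_orth}\textit{(ii)} applies and yields $G(\lambda,\cdot) = 0$ for a.e.\ $\lambda$, hence $G = 0$.

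The isometry itself is routine once \eqref{eq:helg_plancherel} is in hand; the genuine difficulty is surjectivity, and within it the coupling between $\lambda$ and $\zeta$, since a single test function $f$ fixes $\hat f(\lambda,\cdot)$ for all $\lambda$ simultaneously. My chosen remedy is the convolution identity \eqref{eq:FT_conv}, which separates the two variables and reduces the problem to the angular density of Proposition~\ref{thm:F_orth}\textit{(ii)} together with the radial, one-dimensional Plancherel theorem. The remaining technical points are measure-theoretic: the fast decay of $\hat f(\lambda,\cdot)$ in $\lambda$ for smooth $f$, together with the polynomial bound on $\left| \hc(\lambda) \right|^{-1}$ from Proposition~\ref{thm:hc_poly}, guarantees that $\Phi_f$ lies in $\L^2(\R^+, \left| \hc(\lambda) \right|^{-2}\dd\lambda)$ and justifies Fubini in the pairing; and one selects $(f_m)$ in a topology strong enough (e.g.\ uniform convergence with supports in a common compact set, which forces uniform convergence of $\hat{f_m}(\lambda,\cdot)$ in $\zeta$) that fiberwise density of the transforms is preserved.
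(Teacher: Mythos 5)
Your proposal is correct and takes essentially the same route as the paper: the Plancherel identity \eqref{eq:helg_plancherel} on $\D(B)$ plus density gives the isometric extension, and surjectivity is settled by annihilating the range, decoupling $\lambda$ from $\zeta$ through the convolution theorem \eqref{eq:FT_conv}, and concluding fiberwise with a countable family of test functions and the angular density of Proposition \ref{thm:F_orth}\textit{(ii)}. The only local deviation is the radial step, where you import the classical one-dimensional spherical Plancherel theorem to obtain $\L^2$-density of $\{\hat{k} : k \in \D_\natural(B)\}$, whereas the paper argues self-containedly that these transforms form a self-adjoint subalgebra of the even continuous functions vanishing at infinity, dense by Stone--Weierstrass, which annihilates the finite measure $\Phi_f(\lambda) \left|\hc(\lambda)\right|^{-2} \dd\lambda$; both variants are valid.
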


\begin{proof}
  Let $F \in \L^2(\R^+{\times}\, S)$ satisfy
  \begin{equation*}
    \int_{\R^+} \int_S \hat{f}(\lambda, \zeta) F(\lambda, \zeta) \left| \hc(\lambda) \right|^{-2} \dd\sigma(\zeta)\, \dd\lambda = 0.
  \end{equation*}
  for every $f \in \D(B)$.
  Then the equation remains true if $f$ is replaced by $f * g$ ($g \in \D_\natural(B)$).
  By \eqref{eq:FT_conv} we obtain
  \begin{equation*}
    \int_{\R^+} \hat{g}(\lambda) \int_S \hat{f}(\lambda,\zeta) F(\lambda, \zeta) \, \dd\sigma(\zeta) \left| \hc(\lambda) \right|^{-2} \dd\lambda = 0.
  \end{equation*}
  The functions $\hat{g}$, for $g \in \D_\natural(B)$, form a subalgebra of the set of even continuous functions on $\R$ vanishing at infinity, which lies dense therein by the Stone-Weierstrass theorem.
  The inner integral thus vanishes for every $f$, up to a null set $N(f)$.

  The rest follows with the following approximation.
  Let $(r_k)$ be a sequence satisfying $0 < r_k < 1$, $\lim_{k\to \infty} r_k = 1$,
  and for each $k \in \N$ choose $\psi_k \in \D(B)$ with $\psi_k = 1$ on $r_kB$.
  Let $\mathcal{P}_\Q$ be the set of polynomials in $z_1, \bar{z}_1, \ldots, z_n, \bar{z}_n$ with rational coefficients.
  Then the set $\mathcal{M} = \left\{ \psi_k p : p \in \mathcal{P}_\Q, k \in \N \right\}$ is countable, furthermore $N = \bigcup_{f \in \mathcal{M}} N(f)$ is a countable union of null sets, therefore
  \begin{equation} \label{eq:orth_abz}
    \int_S \hat{f} (\lambda, \zeta) F(\lambda, \zeta)\, \dd\sigma(\zeta) = 0
  \end{equation}
  holds for all $f \in \mathcal{M}$ and $\lambda \notin N$.
  Now for each $f \in \D(B)$, we may choose $k \in \N$ so that $\supp f \subset \overline{r_k B}$.
  On this compact set $f$ can be approximated uniformly by functions $\psi_k p$, with $p \in \mathcal{P}_\Q$.
  Therefore \eqref{eq:orth_abz} holds for all $f \in \D(B)$ and $\lambda \in \C \setminus N$.
  By proposition \ref{thm:F_orth} we conclude $F(\lambda, \cdot) = 0$ for almost all $\lambda$, so $F = 0$.
\end{proof}

\section*{Acknowledgement}

I would like to thank Jens Wirth, Uwe Kähler, and Paula Cerejeiras for introducing me to this area of mathematics and for kindly providing valuable conversation and pieces of advice.

\newcommand{\etalchar}[1]{$^{#1}$}


\vfill

email: \texttt{mathias.ionescu-tira@mathematik.uni-halle.de} 

date: \today
\end{document}